\newcommand{\RR}{\mathbb{R}}
\newcommand{\OO}{\mathscr{O}}
\newcommand{\Cfield}{\mathbb{C}}
\newcommand{\ep}{\varepsilon}
\newcommand{\Spec}{\textnormal{Spec}\,}
\newcommand{\image}{\textnormal{im}\,}
\newcommand{\kernel}{\textnormal{ker}\,}
\newcommand{\cokernel}{\textnormal{coker}\,}
\newcommand{\degree}{\textnormal{deg}\,}
\newcommand{\Hom}{\textnormal{Hom}}
\newcommand{\dimension}{\textnormal{dim}\,}
\newcommand{\rank}{\textnormal{rank}\,}
\newcommand{\Ext}{\textnormal{Ext}}
\newcommand{\al}{\alpha}
\newcommand{\cone}{\textnormal{cone}}
\newcommand{\Coh}{\textnormal{Coh}}
\newcommand{\Ap}{\mathcal{A}^p}
\newcommand{\arinj}{\ar@{^{(}->}}
\newcommand{\arsurj}{\ar@{->>}}
\newcommand{\areq}{\ar@{=}}
\newcommand{\iotam}{{\iota_m}}
\newcommand{\Lo}{\overset{L}{\otimes}}
\newcommand{\HH}{\mathcal{H}}
\newcommand{\wt}{\widetilde}
\newtheorem{theorem}{Theorem}[section]
\newtheorem{lemma}[theorem]{Lemma}
\newtheorem{coro}[theorem]{Corollary}
\newtheorem{pro}[theorem]{Proposition}
\newtheorem{definition}[theorem]{Definition}
\begin{document}

    \title{Moduli of PT-Semistable Objects I}
    \author{Jason Lo}
\begin{abstract}
        We show boundedness for PT-semistable objects of any Chern classes on a smooth projective three-fold $X$.  Then we show that the stack of objects in the heart $\langle \Coh_{\leq 1}(X), \Coh_{\geq 2}(X)[1] \rangle$ satisfies a version of the valuative criterion for completeness.  In the remainder of the paper, we give a series of results on how to compute cohomology with respect to this heart.
\end{abstract}

\address{Department of Mathematics, University of Missouri at Columbia, USA}
\email{locc@missouri.edu}

\maketitle
%\tableofcontents

\section{Introduction}

This is the first of two papers, which study PT-semistable complexes in the derived category and their moduli.

The first main result of this paper is Proposition \ref{pro-boundedness}, a boundedness result for PT-semistable objects.  This would imply that the moduli of PT-semistable objects, once constructed, is of finite-type.

The second main result of this paper is Theorem \ref{theorem-completenessofheart-3}, which shows that objects in the heart $\Ap = \langle \Coh_{\leq 1}(X), \Coh_{\geq 2}(X)[1] \rangle$ satisfy the valuative criterion for completeness when $X$ is a three-fold.   This theorem, and the other  results on computing cohomology with respect to the t-structure whose heart is $\Ap$, lay the groundwork for performing semistable reduction for flat families of objects in $\Ap$, a technique that we will generalise from the setting of sheaves to the setting of derived category in the sequel to this paper.  Actual construction of the moduli spaces of PT-semistable objects will occur in the sequel to this paper.

Even though the arguments in this paper are only written down for PT-stability and  the heart $\Ap$, they could be formalised further, and should also work for a wider class of stability conditions and t-structures.

\subsection{Notation}

Throughout this paper, $k$ will be an algebraically closed field of characteristic 0.  And $R$ will denote a discrete valuation ring (DVR), not necessarily complete, with uniformiser $\pi$ and field of fractions $K$.  Unless specified, $X$ will always denote a smooth projective three-fold over $k$.

We will write $X_R := X \otimes_k R$, and $X_K := X \otimes_R K$.  For any integer $m \geq 1$, let $X_m := X \otimes_k R/\pi^m$, and let
$$ \iota_m : X_m \hookrightarrow X_R$$
denote the closed immersion.  We will often write $\iota$ for $\iota_1$, and $X_k$ for the central fibre of $X_R$.  For integers $1 \leq m' < m$, let
$$\iota_{m,m'} :  X_{m'} \hookrightarrow X_m$$ denote the closed immersion.  We also write
$$ j : X_K \hookrightarrow X_R$$
for the open immersion.

Note that the pushforward functor $\iota_\ast : \Coh (X_k) \to \Coh (X_R)$ is exact, while the pullback $\iota^\ast : \Coh (X_R) \to \Coh (X_k)$ is  right-exact.  Similarly for the pushforward ${\iota_{m,m'}}_\ast$.  The pullback $j^\ast : \Coh (X_R) \to \Coh (X_K)$ is exact.

For a Noetherian scheme $Y$, we will always write $\mbox{Kom} (Y)$ for the category of chain complexes of coherent sheaves on $Y$, $D^b(Y)$ for the bounded derived category of coherent sheaves, and $D(Y)$ for the unbounded derived category of coherent sheaves.

For $m \geq 1$, the subcategories $\Coh_{\leq 1}(X_m)$ and $\Coh_{\geq 2}(X_m)$ form a torsion pair in $\Coh (X_m)$, and so tilting gives us the heart of a t-structure \[\Ap_m := \Ap (X_m) := \langle \Coh_{\leq 1}(X_m), \Coh_{\geq 2}(X_m)[1]\rangle\] on $D^b(X_m)$.  In fact, this also defines a t-structure on $D(X_m)$ (see Proposition \ref{tstructureDX}).  The truncation functors associated to this t-structure will be denoted by $\tau^{\leq 0}_{\Ap_m}, \tau^{\geq 0}_{\Ap_m}$, and the cohomology functors denoted by $\HH^i_{\Ap_m}$.  We will drop the subscripts when the context is clear.  On any Noetherian scheme $Y$, the cohomology functors with respect to the standard t-structure on $D(Y)$ will always be denoted by $H^i$.  On $X_K$, let $\Ap_K (X_K)$ or $\Ap_K$ denote the heart $\langle \Coh_{\leq 1}(X_K), \Coh_{\geq 2}(X_K)[1]\rangle$.

We will use $D^{\leq 0}_{\Ap_m}, D^{\geq 0}_{\Ap_m}$ to denote the full subcategories of $D(X_m)$
\begin{align*}
  D^{\leq 0}_{\Ap_m} &= \{ E \in D(X_m) : \HH^i(E)=0 \text{ for all } i >0 \} \\
  &= \{ E \in D(X_m) : H^i(E)=0 \text{ for all } i >1, H^0(E) \in \Coh_{\leq 1}(X_m) \}, \\
  D^{\geq 0}_{\Ap_m} &= \{ E \in D(X_m) : \HH^i(E)=0 \text{ for all } i<0 \} \\
  &= \{ E \in D(X_m) : H^i(E)=0 \text{ for all }i<-2, H^{-1}(E) \in \Coh_{\geq 2}(X_m) \}.
\end{align*}

In summary, we have the following maps between the various schemes:
\begin{equation*}
  \iota_{m,m'} : X_{m'} \hookrightarrow X_m, \quad  \iota_m : X_m \to X_R,\quad  j : X_K \hookrightarrow X_R
\end{equation*}
and associated pullback and pushforward functors
\begin{align*}
  \iota_{m,m'}^\ast &: D(X_m) \to D(X_{m'}), \quad {\iota_{m,m'}}_\ast : D(X_{m'}) \to D(X_m) \\
  \iota_m^\ast &: D(X_R) \to D(X_m), \quad \iotam_\ast : D(X_m) \to D(X_R) \\
  j^\ast &: D(X_R) \to D(X_K).
\end{align*}
Since ${\iota_{m,m'}}_\ast$ is exact, it takes $\Ap_{m'}$ into $\Ap_m$.

Since $\iota_m$ is a closed immersion, it is a projective morphism.  Hence we have the adjoint pair $L\iota_m^\ast \dashv \iotam_\ast$, i.e.\ $L\iota_m^\ast$ is the left adjoint, and $\iotam_\ast$ the right adjoint \cite[p.83]{FMTAG}.  Similarly, we have the adjoint pair $ L\iota_{m,m'}^\ast \dashv {\iota_{m,m'}}_\ast$ for any $1\leq m' < m$.

Consistent with the definitions introduced in \cite{AP} and \cite{BSMSKTS}, we will use the following notion of flatness for derived objects:

\begin{definition}
Let $S$ be a Noetherian scheme over $k$, and $X$ a smooth projective three-fold over $k$.  We say an object $E \in D^b(X \times S)$ is a flat family of objects in $\Ap$ over $S$ if, for all closed points $s \in S$, we have
\[
E|_s := L\iota_s^\ast E \in \Ap (X|_s) = \langle \Coh_{\leq 1}(X|_s), \Coh_{\geq 2}(X|_s)[1]\rangle
\]
where $\iota_s : X|_{s} \hookrightarrow X \times S$ is the closed immersion of the fibre over $s$.
\end{definition}

%\noindent\textit{Hilbert polynomials.}  For $m \geq 1$, the scheme $X_m = X \otimes_k R/\pi^m$ is proper over $k$, so we can speak of Hilbert polynomials of coherent sheaves on $X_m$, and hence of objects in $D^b(X_m)$.  Note that,  any injection or surjection between objects $A, B$ of equal Hilbert polynomial in $\Ap_m$ must be an isomorphism.  To see this, just take the long exact sequence of cohomology with respect to the standard t-structure.

Finally, given a complex $E^\bullet \in D^b(X)$, we say $E^\bullet$ is of dimension $d$ if the dimension of the support of $E^\bullet$, defined to be the union of the supports of the various cohomology $H^i(E^\bullet)$, is $d$.

\subsection{Acknowledgements} This paper and its sequel grew out of my doctoral thesis  at Stanford University.  And so, first and foremost, I would like to thank my thesis advisor, Jun Li, for his patience and constant encouragement while I worked on this project.  I would also like to thank Arend Bayer for generously providing comments and suggestions as this work took shape.  For the many useful conversations, I thank Brian Conrad, Young-Hoon Kiem, Ravi Vakil and Ziyu Zhang.  And for graciously answering my questions at various points in time, I thank Dan Edidin, Max Lieblich, Amnon Neeman, Alexander Polishchuk and Yukinobu Toda.

\section{Background}

This section is for the readers who wish to be reminded of the basics of t-structures and polynomial stability.  We also define PT-stability in this section.

\subsection{$\mu$-Stability}

Let $(X,H)$ be a smooth projective variety of dimension $n$.  For any coherent sheaf $F$ on $X$, we  define its degree (with respect to $H$) as $\degree (F) = \int_X c_1(F)\cdot c_1(H)^{n-1}$.   Recall that we define the slope function $\mu$ by $\mu (F) = \frac{\degree (F)}{\rank (F)}$.

If $F$ is a torsion sheaf, we simply define the value to be $+\infty$. For a nonzero torsion-free sheaf $F$ on $X$, we say it is \textit{$\mu$-stable} (resp.\ \textit{$\mu$-semistable}) if, for all nonzero subsheaves $G \subset F$ with $\rank (G)< \rank (F)$, we have $\mu (G) < \mu (F)$ (resp.\ $\mu (G) \leq \mu (F)$).  Given any coherent sheaf $F$ on $X$, there exists a unique filtration, called the \textit{Harder-Narasimhan (HN) filtration}, by subsheaves
\[
 tors(F)=F_0 \subset F_1 \subset \cdots \subset F_m = F,
 \]
where $F_0=tors(F)$ is the torsion subsheaf of $F$, and the $F_i/F_{i-1}$ are all torsion-free $\mu$-semistable sheaves with strictly decreasing slopes:
\[
 \mu (F_1/F_0) > \mu (F_2/F_1) > \cdots > \mu (F_m/F_{m-1}).
 \]

\subsection{$t$-Structures}\label{section-tstructures}

Having a $t$-structure on a triangulated category allows us to compute cohomology.  Recall:

\begin{definition}\cite{GM}
Let $\mathcal{D}$ be a triangulated category.  A t-structure on $\mathcal{D}$ is a pair of strictly full subcategories $(\mathcal{D}^{\leq 0}, \mathcal{D}^{\geq 0})$ satisfying the following conditions, where we define $\mathcal{D}^{\leq n} := \mathcal{D}^{\leq 0}[-n]$ and $\mathcal{D}^{\geq n} := \mathcal{D}^{\geq 0}[-n]$:
\begin{enumerate}
\item $\mathcal{D}^{\leq 0} \subset \mathcal{D}^{\leq 1}$ and $\mathcal{D}^{\geq 0}\subset \mathcal{D}^{\geq 1}$.
\item $\Hom (X,Y)=0$ for $X \in \mathcal{D}^{\leq 0}$, $Y \in\mathcal{D}^{\geq 1}$.
\item For any $X \in \mathcal{D}$ there exists a distinguished triangle $A \to X \to B \to A[1]$ with $A \in \mathcal{D}^{\leq 0}$, $B \in \mathcal{D}^{\geq 1}$.
\end{enumerate}
The heart of the t-structure is the full subcategory $\mathcal{A} = \mathcal{D}^{\geq 0} \cap \mathcal{D}^{\leq 0}$.
\end{definition}

The heart of a t-structure is always an abelian category \cite[Theorem IV-$\S$4.4]{GM}.  We say that we have a short exact sequence $0 \to A \to B \to C \to 0$ in $\mathcal{A}$ if and only if we have an exact triangle $A \to B \to C \to A[1]$ in $\mathcal{D}$, and all the objects $A,B,C$ lie in $\mathcal{A}$.  For instance, a morphism $f : A \to B$ in $\mathcal{D}$ is an injection in $\mathcal{A}$ if $A, B$ and $\cone (f)$ are all objects in $\mathcal{A}$.

 For any Noetherian scheme $X$, the bounded derived category $D^b(X)$ of coherent sheaves on $X$ is a triangulated category, with the standard t-structure given by
\begin{align*}
  D^{\leq 0} &= \{ E \in D^b(X) : H^i(E)=0 \text{ for all } i >0 \} \\
  D^{\geq 0} &= \{ E \in D^b(X) : H^i(E)=0 \text{ for all } i <0 \}.
\end{align*}
The heart of the standard t-structure is $D^{\leq 0} \cap D^{\geq 0} = \Coh (X)$.

\noindent\textit{Truncation functors.} Whenever we have a t-structure $(\mathcal{D}^{\leq 0}, \mathcal{D}^{\geq 0})$ on a triangulated category $\mathcal{D}$, there are associated truncation functors:
\begin{equation*}
  \tau^{\leq n} : \mathcal{D} \to \mathcal{D}^{\leq n} \text{ and } \tau^{\geq n} : \mathcal{D} \to \mathcal{D}^{\geq n}
\end{equation*}
that are right and left adjoint to the corresponding embedding functors, respectively.  Moreover, for any object $X \in \mathcal{D}$, there is an exact triangle of the form
\[
 \tau^{\leq 0} X \to X \to \tau^{\geq 1} X \to \tau^{\leq 0}X[1]
 \]
and the construction of this exact triangle is functorial.  Whenever we have two exact triangles $X' \to X \to X'' \to X'[1]$ where $X' \in \mathcal{D}^{\leq 0}$ and $X'' \in \mathcal{D}^{\geq 1}$, they must be canonically isomorphic.

 For the standard t-structure $(D^{\leq 0}, D^{\geq 0})$ on the bounded derived category $D^b(X)$ of a Noetherian scheme, and a complex $E^\bullet = [ \cdots \to E^{i-1} \overset{d^{i-1}}{\to} E^i \to \cdots]$ of coherent sheaves on $X$, we have
\begin{align*}
  \tau^{\leq i} E^\bullet &= [\cdots \to E^{i-1} \to \kernel d^i \to 0 \to \cdots ], \\
  \tau^{\geq i} E^\bullet &= [\cdots \to 0 \to E^i/\image d^{i-1} \to E^{i+1} \to \cdots],
\end{align*}
and natural maps in the category of chain complexes $\tau^{\leq i}E^\bullet \to E^\bullet$ and $E^\bullet \to \tau^{\geq i}E^\bullet$.  It is easy to see that $H^i (E^\bullet ) = \tau^{\leq i}\tau^{\geq i} E^\bullet = \tau^{\geq i}\tau^{\leq i}E^\bullet$, and other properties such as $\tau^{\leq m} E^\bullet \cong \tau^{\leq m}\tau^{\leq n}E^\bullet$ if $m \leq n$.

\noindent\textit{Cohomology functors.}  With notation as above, define the cohomology functor
$$ \mathcal{H}^i := \tau^{\leq i}\tau^{\geq i} : \mathcal{D} \to \mathcal{A},$$
which is naturally isomorphic to the functor $\tau^{\geq i}\tau^{\leq i}$. In the case of the standard t-structure on $D^b(X)$, for example, this would just be the usual cohomology functor for chain complexes of coherent sheaves. The subcategories $\mathcal{D}^{\leq n}$ and $\mathcal{D}^{\geq n}$ can be described as
\begin{align*}
  \mathcal{D}^{\leq n} &= \{ X \in \mathcal{D} : \mathcal{H}^i(X) = 0 \text{ for all } i >n \} \\
  \mathcal{D}^{\geq n} &= \{ X \in \mathcal{D} : \mathcal{H}^i(X) = 0 \text{ for all } i <n \}.
\end{align*}

Given a triangulated category $\mathcal{D}$ with t-structure $(\mathcal{D}^{\leq n}, \mathcal{D}^{\geq n})$ with heart $\mathcal{A}$, for any nonzero object $E \in \mathcal{D}$ there are integers $m<n$ and a diagram of the form
\begin{equation*}
\def\objectstyle{\scriptstyle}
\def\labelstyle{\scriptstyle}
\xymatrix@-.5pc@R=1pc{
0=E_m \ar[rr] & & E_{m+1} \ar[dl] \ar[rr] & & E_{m+2} \ar[r] \ar[dl] & \cdots \ar[r] & E_{n-1} \ar[rr] & & E_n = E \ar[dl] \\
 & A_{m+1} \ar[ul]^{[1]} & & A_{m+2} \ar[ul]^{[1]} & & & & A_n \ar[ul]^{[1]} &
}
\end{equation*}
where the triangles are exact, and $A_i[-i] \in \mathcal{A}$ for each $i$.  In fact, $A_i[-i] \cong \mathcal{H}^i(E)$ are the cohomology objects of $E$.  A t-structure is uniquely determined by its heart.

\noindent\textit{Exact sequences.} Whenever we have a triangulated category $\mathcal{D}$ with a t-structure with heart $\mathcal{A}$, any exact triangle
$$ A \to B \to C \to A[1]$$
in $\mathcal{D}$ yields a long exact sequence of cohomology objects
$$ \cdots \to \HH^i(A) \to \HH^i(B) \to \HH^i(C) \to \HH^{i+1}(A) \to \cdots$$
in the heart $\mathcal{A}$.

\noindent\textit{Tilting.}  Given any abelian category $\mathcal{A}$, the process of tilting constructs a non-standard t-structure on the triangulated category $D^b(\mathcal{A})$ (in fact, on $D(\mathcal{A})$ - see Proposition \ref{tstructureDX}).

\begin{definition}[\cite{TACQA}]
Given a pair of full subcategories $(\mathcal{T},\mathcal{F})$ in an abelian category $\mathcal{A}$, we say that $(\mathcal{T},\mathcal{F})$ is a torsion pair in $\mathcal{A}$ if:
\begin{itemize}
\item $\Hom (T,F)=0$ for all $T \in \mathcal{T}$ and $F \in \mathcal{F}$, and
\item for all $X \in \mathcal{A}$, there is a short exact sequence in $\mathcal{A}$
\[
0 \to T \to X \to F \to 0
\]
where $T \in \mathcal{T}$, $F \in \mathcal{F}$.
\end{itemize}
\end{definition}

Given a torsion pair as above, the following full subcategories of $D^b(\mathcal{A})$ define a t-structure on $D^b(\mathcal{A})$ (\cite[Proposition 2.1]{TACQA}):
\begin{align*}
  \mathcal{D}^{\leq 0} &= \{ E \in D^b(\mathcal{A}) : H^0 (E) \in \mathcal{T}, \text{ and } H^i(E)=0 \text{ for all }i>0 \} \\
  \mathcal{D}^{\geq 0} &= \{ E \in D^b(\mathcal{A}) : H^{-1} (E) \in \mathcal{F}, \text{ and }H^i(E)=0 \text{ for all }i<-1\}.
\end{align*}
And the heart of this t-structure is
\begin{align*}
 \mathcal{D}^{\leq 0} \cap \mathcal{D}^{\geq 0} &= \langle \mathcal{T}, \mathcal{F}[1] \rangle \\
 &= \{ E \in
D^b(\mathcal{A}) : H^0(E)\in \mathcal{T}, H^{-1}(E) \in \mathcal{F}, H^i(E)=0 \text{ for }i \neq 0,-1\}
\end{align*}
where $\langle \mathcal{T}, \mathcal{F}[1]\rangle$ denotes the extension-closed subcategory generated by $\mathcal{T}$ and $\mathcal{F}[1]$.

\noindent\textit{Remark.} Given any object $E$ in the heart $\mathcal{D}^{\leq 0} \cap \mathcal{D}^{\geq 0}$ obtained from tilting as above, there is a short exact sequence $$ 0 \to H^{-1}(E)[1] \to E \to H^0(E) \to 0$$ in the abelian category $\mathcal{D}^{\leq 0} \cap \mathcal{D}^{\geq 0}$.

%principle: in a trianglated category, call objects X; in a derived category, call objects E

\noindent\textit{Example.} Let $X$ be a Noetherian scheme of dimension $n$, and $E$ any coherent sheaf on $X$.  For any integer $0\leq d < n$, there is a unique short exact sequence in $\Coh (X)$:
\[0 \to T \to E \to F \to 0\]
where $T$ is the maximal subsheaf of dimension at most $d$, and $F$ has no nonzero subsheaf of dimension $d$ or less.  In other words, if we define
\begin{align*}
  \Coh_{\leq d}(X) &= \{ E \in \Coh (X) : \dimension E \leq d \} \\
  \Coh_{\geq d+1}(X) &= \{ E \in \Coh (X) : \Hom_{\Coh (X)} (F,E)=0 \text{ for all } F \in \Coh_{\leq d}(X) \}
\end{align*}
then $(\Coh_{\leq d}(X),\Coh_{\geq d+1}(X))$ is a torsion pair in the abelian category $\Coh (X)$.  For $0 \leq d' < d$, the category $\Coh_{\leq d'}(X)$ is a Serre subcategory of $\Coh_{\leq d}(X)$, so we may form the quotient category $\Coh_{d,d'}(X) := \Coh_{\leq d}(X)/\Coh_{\leq d'} (X)$, which is an abelian category.  For a coherent sheaf $F$ on $X$, we write $p(F)$ for its reduced Hilbert polynomial, and if $F \in \Coh_{\leq d}(X)$, we write $p_{d,d'}(F)$ for its reduced Hilbert polynomial as an element of $\Coh_{\leq d,d'}(X)$ (see \cite[Section 1.6]{HL}).

In particular, when $X$ is a Noetherian scheme of dimension 3, following the notation in \cite{BayerPBSC}, we will always use $\Ap (X)$, or simply $\Ap$, to denote the heart obtained from tilting $\Coh (X)$ with respect to the torsion pair $(\Coh_{\leq 1}(X),\Coh_{\geq 2}(X))$:
\begin{align*}
\Ap := \Ap (X) &:= \langle \Coh_{\leq 1}(X), \Coh_{\geq 2}(X) [1]\rangle \\
&= \{ E \in D^b(X) : H^0(E) \in \Coh_{\leq 1}(X), H^{-1}(E) \in \Coh_{\geq 2}(X), \\
&\hspace{5cm} H^i(E)=0 \text{ for all } i\neq 0, -1\}.
\end{align*}
Various properties of the heart $\Ap$ have been worked out in \cite{TodaLSOp}.  Note that $\Coh_{\leq 1}(X)$ is an abelian subcategory of $\Ap$.

\subsection{Polynomial Stability and PT-Stability}

Polynomial stability was defined on $D^b(X)$ by Bayer for any normal projective variety $X$ \cite[Theorem 3.2.2]{BayerPBSC}.  While the central charge for a Bridgeland stability condition is required to take values in $\Cfield$, the central charge for a polynomial stability condition takes values in the abelian group  $\Cfield [m]$ of polynomials over $\Cfield$.

The particular class of polynomial stability conditions we will concern ourselves with for the rest of the paper consists of the following data, where $X$ is a smooth projective three-fold:
\begin{enumerate}
\item the heart $\Ap = \langle \Coh_{\leq 1}(X), \Coh_{\geq 2}(X)[1]\rangle$, and
\item a group homomorphism (the central charge) $Z : K(X) \to \Cfield [m]$ of the form
$$ Z(E)(m) = \sum_{d=0}^3 \int_X  \rho_d H^d  \cdot ch(E) \cdot U \cdot m^d$$
where
\begin{enumerate}
\item the $\rho_d \in \Cfield$ are nonzero and satisfy $\rho_0, \rho_1 \in \mathbb{H}$, $\rho_2, \rho_3 \in -\mathbb{H}$, and $\phi (-\rho_2) > \phi (\rho_0) > \phi (-\rho_3) > \phi (\rho_1)$ (see Figure \ref{figure-PTstab} below),
\item $H \in \text{Amp}(X)_\RR$ is an ample class, and
\item $U =1+U_1 + U_2 + U_3\in A^\ast (X)_\RR$ where $U_i \in A^i(X)$.
\end{enumerate}
\end{enumerate}

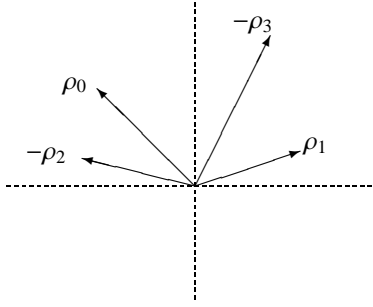
\begin{figure*}[h]
\centering
\setlength{\unitlength}{1mm}
\begin{picture}(50,40)
\multiput(0,15)(1,0){50}{\line(1,0){0.5}}
\multiput(25,0)(0,1){40}{\line(0,1){0.5}}
\put(25,15){\vector(-4,1){15}}
\put(2.5,18.6){$-\rho_2$}
\put(25,15){\vector(-1,1){13}}
\put(7.5,28){$\rho_0$}
\put(25,15){\vector(1,2){10}}
\put(30,36){$-\rho_3$}
\put(25,15){\vector(3,1){14}}
\put(39.5,20){$\rho_1$}
\end{picture}
\caption{Configuration of the $\rho_i$ for PT-stability conditions}
\label{figure-PTstab}
\end{figure*}

The configuration of the $\rho_i$ is compatible with the heart $\Ap$, in the sense that for every nonzero $E \in \Ap$, we have $Z(E)(m) \in \mathbb{H}$ for $m \gg 0$.  So there is a unqiuely determined function $\phi (E)(m)$ (strictly speaking, a uniquely determined function germ) such that
\[
 Z(E)(m) \in \mathbb{R}_{>0} e^{i \pi \phi (E)(m)} \text{ for all } m \gg 0.
 \]
This allows us to define the notion of semistability on objects.  We say that a nonzero object $E$ is \textit{$Z$-semistable} (resp.\ \textit{$Z$-stable}) if for any nonzero subobject $G \hookrightarrow E$ in $\Ap$, we have $\phi (G)(m) \leq \phi (E)(m)$ for $m \gg 0$ (resp.\ $\phi (G)(m) < \phi (E)(m)$ for $m \gg 0$).  We also write $\phi (G) \prec \phi (E)$ (resp.\ $\phi (G) \preceq \phi (E)$) to denote this.  Harder-Narasimhan filtrations for polynomial stability functions exist \cite[Section 7]{BayerPBSC}.

By \cite[Proposition 6.1.1]{BayerPBSC}, with respect to any polynomial stability function from the class above, the stable objects in $\Ap$ with $ch= (-1,0,\beta,n)$ and trivial determinant are exactly the stable pairs in Pandharipande and Thomas' paper \cite{PT}, which are 2-term complexes of the form $$[\OO_X \overset{s}{\to} F]$$ where $F$ is a pure 1-dimensional sheaf and $s$ has 0-dimensional cokernel.  For this reason, and in line with calling a stability function as above a \textit{PT-stability function} in \cite{BayerPBSC}, we call any polynomial stability condition satisfying the above requirements a \textit{PT-stability condition}, and any nonzero object in $\Ap$ semistable (resp.\ stable) with respect to it \textit{PT-semistable} (resp.\ \textit{PT-stable}).

\section{Boundedness of PT-Stable Objects}

From \cite{BayerPBSC}, we know that HN filtrations for polynomial stability conditions exist and are necessarily unique (up to isomorphism).  Let $\phi$ be the phase function of any polynomial stability (not necessarily PT-stability) on some category of perverse coherent sheaves $\Ap$.  Take any $0 \neq E \in \Ap$, and suppose it has HN filtration
\[
0 \neq E_0 \hookrightarrow E_1 \hookrightarrow \cdots \hookrightarrow E_n = E.
\]
Then $E_0$ is a \textit{maximal destabilising subobject} of $E$ in the following sense:

\begin{lemma}\label{lemma-mds-existence}
With the above hypotheses, for any $F \hookrightarrow E$ in $\Ap$ such that $\phi (F) \geq \phi (E_0)$, we have $\phi (F) = \phi (E_0)$ and $F \hookrightarrow E_0$.
\end{lemma}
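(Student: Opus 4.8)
The plan is to exploit the defining properties of the Harder--Narasimhan filtration together with the \emph{see-saw} behaviour of phases under short exact sequences. Recall that the central charge $Z$ is additive on short exact sequences, so for any $0 \to A \to B \to C \to 0$ in $\Ap$ with $A,B,C$ all nonzero we have $Z(B)(m) = Z(A)(m) + Z(C)(m)$, and since all three values lie in $\mathbb{H}$ for $m \gg 0$, the phase $\phi(B)$ lies between $\phi(A)$ and $\phi(C)$; moreover the inequalities are strict for $m \gg 0$ whenever $\phi(A) \neq \phi(C)$. I would use this repeatedly, along with the fact that $E_0$, being the first factor of the HN filtration, is semistable with phase $\phi(E_0)$ strictly larger than the phase $\phi(E_i/E_{i-1})$ of every other HN factor.

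First I would record the standard auxiliary fact that, for any object $M \in \Ap$, every nonzero subobject $N \hookrightarrow M$ satisfies $\phi(N) \leq \phi_{\max}(M)$, where $\phi_{\max}(M)$ denotes the (strictly largest) phase among the factors of the HN filtration of $M$. This is proved by induction on the length of the HN filtration: writing $M_1$ for the top factor, which is a subobject of $M$, semistable of phase $\phi_{\max}(M)$, and splitting $N$ along $N \cap M_1 \hookrightarrow M_1$ and its image in $M/M_1$, semistability of $M_1$ bounds the phase of the sub-part while the inductive hypothesis bounds that of the quotient-part, so see-saw gives $\phi(N) \leq \phi_{\max}(M)$. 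Applied to $M = E/E_0$, whose HN factors are exactly $E_1/E_0, \dots, E_n/E_{n-1}$ with top phase $\phi(E_1/E_0) < \phi(E_0)$, this shows that every nonzero subobject of $E/E_0$ has phase $< \phi(E_0)$.

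With this in hand the main argument is short. Given $F \hookrightarrow E$, compose with the quotient $E \twoheadrightarrow E/E_0$ to obtain $h : F \to E/E_0$, and set $F' := \kernel h$ and $\bar F := \image h$; since $\kernel(E \to E/E_0) = E_0$, we get $F' \hookrightarrow E_0$ and a short exact sequence $0 \to F' \to F \to \bar F \to 0$ in $\Ap$ with $\bar F \hookrightarrow E/E_0$. I claim $\bar F = 0$. If not, the auxiliary fact gives $\phi(\bar F) < \phi(E_0)$, while semistability of $E_0$ gives $\phi(F') \leq \phi(E_0)$ whenever $F' \neq 0$. Then see-saw yields $\phi(F) \leq \max(\phi(F'), \phi(\bar F)) < \phi(E_0)$ when $\phi(F') < \phi(E_0)$; when $\phi(F') = \phi(E_0)$, the strict see-saw (since $\phi(\bar F) < \phi(F')$) again gives $\phi(F) < \phi(E_0)$; and the degenerate case $F' = 0$ gives $F \cong \bar F$, hence $\phi(F) < \phi(E_0)$ directly. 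Each possibility contradicts the hypothesis $\phi(F) \geq \phi(E_0)$, so $\bar F = 0$, i.e.\ $h = 0$, and therefore $F = F' \hookrightarrow E_0$. Finally $F \hookrightarrow E_0$ with $E_0$ semistable forces $\phi(F) \leq \phi(E_0)$, which together with $\phi(F) \geq \phi(E_0)$ gives $\phi(F) = \phi(E_0)$.

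The main obstacle I anticipate is bookkeeping in the polynomial (germ) setting: all phase comparisons are statements ``for $m \gg 0$'', so I must ensure that the finitely many strict see-saw inequalities invoked are simultaneously valid on a common range of $m$, and that the total preorder on phase germs interacts with $\max$ as expected. The algebraic heart of the argument---pushing $F$ into $E/E_0$ and killing its image by a phase estimate---is otherwise routine once the auxiliary bound on subobjects is in place.
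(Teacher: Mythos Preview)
Your argument is correct and complete; the see-saw bookkeeping you flag is routine since only finitely many germ comparisons occur and the order on phase germs is total.

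Your route genuinely differs from the paper's. The paper never invokes see-saw or a kernel/image decomposition of $F$ along $E_0$; instead it takes the first HN piece $F_0$ of $F$ and, using only the fact that there are no nonzero maps between semistable objects of strictly decreasing phase, shows that the composite $F_0 \hookrightarrow E \twoheadrightarrow E/E_{n-1}$ vanishes, then $F_0 \hookrightarrow E_{n-1} \twoheadrightarrow E_{n-1}/E_{n-2}$ vanishes, and so on down to $F_0 \hookrightarrow E_0$, whence the contradiction or equality. By contrast, you first isolate the reusable bound $\phi(N) \preceq \phi_{\max}(M)$ for all subobjects $N \hookrightarrow M$ (proved by induction on HN length via see-saw), and then the main step is a single kernel/image split of $F$ along $E_0 \subset E$. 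Your approach packages a clean standalone lemma and needs only one filtration step in the main argument; the paper's approach avoids any auxiliary statement and relies solely on the ``no maps from higher to lower phase'' principle, at the cost of walking through the whole filtration of $E$ and invoking the HN filtration of $F$ as well.
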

\begin{proof}
Take any subobject $F$ of $E$ in $\Ap$.  First suppose that $\phi (F) > \phi (E_0)$.  Suppose $F$ has HN filtration $0 \neq F_0 \hookrightarrow F_1 \hookrightarrow \cdots \hookrightarrow F$.  Then the composition $F_0 \hookrightarrow E \twoheadrightarrow E/E_{n-1}$ must be the zero map, because $F_0$ and $E/E_{n-1}$ are both semistable, and $\phi (F_0) \geq \phi (F) > \phi (E_0) \geq \phi (E/E_{n-1})$.  Hence we have $F_0 \hookrightarrow E_{n-1}$ in $\Ap$.  We can continue this process (so we would consider the composition $F_0 \hookrightarrow E_{n-1} \twoheadrightarrow E_{n-1}/E_{n-2}$ next), and eventually obtain $F_0 \hookrightarrow E_0$.  But then $F_0, E_0$ are both semistable, and $\phi (F_0) \geq \phi (F) > \phi (E_0)$, so we have a contradiction.  So if $\phi (F)\geq \phi (E_0)$, equality must hold.

Now suppose $\phi (F) = \phi (E_0)$.  We want to show that $F \hookrightarrow E_0$.  We have $\phi (F_0) \geq \phi (F) = \phi (E_0) > \phi (E_1/E_0) > \phi (E_2/E_1) > \cdots > \phi (E_n/E_{n-1})$, and so by the same arguments as above, we get $F_0 \subseteq E_0$.  This, together with the facts that $F_0, E_0$ are both semistable, and that $\phi (F_0) \geq \phi (F) = \phi (E_0)$, implies we cannot have $\phi (F_0) > \phi (E_0)$.  So $\phi (F_0) = \phi (E_0)$, forcing $\phi (F_0) = \phi (F)$.  This means that $F=F_0$, i.e.\ $F$ itself must be semistable and $F \hookrightarrow E_0$ holds.
\end{proof}

Next, we give a partial characterisation of PT-semistable objects.

\begin{lemma}[Rank-zero PT-semistable objects]\label{lemma-rankzeroPTssobj}
Let $E \in \Ap (X)$ be an object of rank zero.  If $E$ is of dimension 2, then $E$ is PT-semistable if and only if $E=H^{-1}(E)[1]$ where $H^{-1}(E)$ is Gieseker semistable.  If $E$ is of dimension 0 or 1, then $E$ is PT-semistable if and only if $E=H^0(E)$ where $H^0(E)$ is Gieseker semistable.
\end{lemma}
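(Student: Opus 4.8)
The plan is to first determine the shape of a rank-zero object of $\Ap$ and then split into the two cases. An object $E \in \Ap$ has $H^0(E) \in \Coh_{\leq 1}(X)$ and $H^{-1}(E) \in \Coh_{\geq 2}(X)$. Since any sheaf in $\Coh_{\leq 1}(X)$ has rank zero, $\rank E = 0$ forces $\rank H^{-1}(E) = 0$; being in $\Coh_{\geq 2}(X)$, the sheaf $H^{-1}(E)$ has no subsheaf of dimension $\leq 1$, so it is either zero or purely $2$-dimensional. Hence when $\dimension E \leq 1$ we automatically have $H^{-1}(E) = 0$ and $E = H^0(E) \in \Coh_{\leq 1}(X)$, whereas when $\dimension E = 2$ the sheaf $F := H^{-1}(E)$ is a nonzero pure $2$-dimensional sheaf. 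I will use throughout the leading (highest-degree in $m$) term of $Z(-)(m)$: a nonzero sheaf in $\Coh_{\leq 1}(X)$ contributes leading phase $\phi(\rho_1)$ (if it has a $1$-dimensional part) or $\phi(\rho_0)$ (if purely $0$-dimensional), while $F[1]$ for $F$ pure $2$-dimensional contributes leading phase $\phi(-\rho_2)$; the configuration of the $\rho_i$ gives $\phi(-\rho_2) > \phi(\rho_0) > \phi(\rho_1)$.

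For $\dimension E = 2$ the first step is to show that PT-semistability forces $H^0(E) = 0$. The canonical sequence $0 \to H^{-1}(E)[1] \to E \to H^0(E) \to 0$ in $\Ap$ (the Remark following the tilting construction) presents $H^0(E)$ as a quotient of $E$. Because $F \neq 0$ is $2$-dimensional, $Z(E)(m)$ has leading term a positive multiple of $-\rho_2$ and hence $\phi(E)$ has leading phase $\phi(-\rho_2)$, while a nonzero $H^0(E)$ would have leading phase strictly below $\phi(-\rho_2)$. The quotient formulation of semistability---every nonzero quotient $Q$ of $E$ satisfies $\phi(Q) \succeq \phi(E)$, which follows from the see-saw property of the additive $Z$---would give $\phi(H^0(E)) \succeq \phi(E)$, contradicting $\phi(H^0(E))(m) < \phi(E)(m)$ for $m \gg 0$. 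Thus $H^0(E) = 0$ and $E = F[1]$.

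What remains, in both cases, is to match PT-semistability of the pure object with Gieseker semistability of the underlying sheaf, and the crux is the following comparison, which I regard as the main obstacle: \emph{among pure sheaves of a fixed dimension, the ordering of PT-phases coincides with the ordering of reduced Hilbert polynomials}. Once the common leading coefficient of $Z$ is normalised, both comparisons are governed by the next coefficient of $Z(-)(m)$, so this reduces to a computation tracking the precise form of $Z$ (including the twist $U$); this is the delicate point and is where I would spend the most care. Granting it, the dimension-$\leq 1$ case is quick: the long exact cohomology sequence together with $\Coh_{\leq 1}(X) \cap \Coh_{\geq 2}(X) = 0$ shows that any subobject $G \hookrightarrow E$ in $\Ap$ is an honest subsheaf (and conversely every subsheaf is a subobject, since $\Coh_{\leq 1}(X)$ is an abelian subcategory of $\Ap$), so PT-semistability is exactly the reduced-Hilbert-polynomial comparison over subsheaves, i.e.\ Gieseker semistability.

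For $\dimension E = 2$ with $E = F[1]$ I argue both directions. For the forward direction I test $F$ against saturated subsheaves $F' \subseteq F$; saturation gives $F/F' \in \Coh_{\geq 2}(X)$, so $F'[1] \hookrightarrow F[1]$ is a subobject in $\Ap$, and the comparison converts $\phi(F'[1]) \preceq \phi(F[1])$ into $p(F') \preceq p(F)$. Since saturated subsheaves suffice to test Gieseker semistability, $F$ is Gieseker semistable. For the converse I take an arbitrary subobject $G \hookrightarrow F[1]$ and set $G' := H^{-1}(G)$; the long exact sequence shows $G'$ is a saturated subsheaf of $F$ and that $G$ is an extension $0 \to G'[1] \to G \to H^0(G) \to 0$ with $H^0(G) \in \Coh_{\leq 1}(X)$ (if $G' = 0$ then $G = H^0(G)$ has leading phase below $\phi(-\rho_2) = \phi(E)$ and we are done directly). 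As $\phi(H^0(G))$ has strictly smaller leading phase than $\phi(G'[1])$, the see-saw property yields $\phi(G) \preceq \phi(G'[1])$, and Gieseker semistability of $F$ together with the comparison gives $\phi(G'[1]) \preceq \phi(F[1])$; hence $\phi(G) \preceq \phi(E)$, proving PT-semistability.
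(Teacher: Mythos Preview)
Your proof is correct. The main difference from the paper's argument lies in the converse direction for the $2$-dimensional case. You analyse an arbitrary subobject $G \hookrightarrow F[1]$ directly, decomposing it via its cohomology $H^{-1}(G)$ and $H^0(G)$ and invoking the see-saw property to bound $\phi(G)$ by $\phi(H^{-1}(G)[1])$, then applying Gieseker semistability of $F$ to the saturated subsheaf $H^{-1}(G)$. The paper instead takes $B$ to be the maximal destabilising subobject of $E$ (using Lemma~\ref{lemma-mds-existence}); since $B$ is itself PT-semistable of dimension $2$, the forward direction already proved forces $B = H^{-1}(B)[1]$, so one only ever compares phases of shifts of pure $2$-dimensional sheaves and the see-saw step is avoided. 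Your route is more self-contained---it does not require the HN machinery---while the paper's shortcut to a semistable test subobject makes the comparison with Gieseker stability marginally cleaner. Both arguments rest on the same translation between phase ordering and reduced Hilbert polynomial ordering for pure sheaves of a fixed dimension, which you rightly flag as the delicate point and which the paper uses without further comment.
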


\begin{proof}
When $E$ is of dimension 1 or 0, $E=H^0(E)$, and the result is clear.  Suppose $E$ is of dimension 2.  If $E$ is PT-semistable, then $H^0(E)$ must be zero, or else it would be a destabilising quotient of $E$.  Hence $E=H^{-1}(E)[1]$.  Given any subsheaf $F \subset H^{-1}(E)$ with pure 2-dimensional quotient $H^{-1}(E)/F$, we have $F[1] \hookrightarrow H^{-1}(E)[1]=E$ in $\Ap$, so by the PT-semistability of $E$, $\phi (F[1]) \preceq \phi (E)$, implying the same inequality for the reduced Hilbert polynomials $p(F) \leq p(H^{-1}(E))$.  Hence $H^{-1}(E)$ is Gieseker semistable.

Conversely, suppose $E=H^{-1}(E)[1]$ and $H^{-1}(E)$ is a Gieseker semistable sheaf of pure dimension 2.  Let $B$ be a maximal destabilising subobject of $E$ in $\Ap$ (the existence of $B$ is asserted by Lemma \ref{lemma-mds-existence}).  Then $B$ must be 2-dimensional and PT-semistable.  So $B=H^{-1}(B)$, and $H^{-1}(B)$ is Gieseker semistable, by the previous paragraph.  The cokernel of $B \hookrightarrow E$ must then be $T[1]$ for some pure 2-dimensional sheaf $T$, giving us an injection of coherent sheaves $H^{-1}(B) \subset H^{-1}(E)$.  That $H^{-1}(E)$ is Gieseker semistable means $p(H^{-1}(B)) \leq p(H^{-1}(E))$, which translates to $\phi (B) \preceq \phi (E)$.  Hence $E$ itself is PT-semistable.
\end{proof}

\paragraph{Remark.} Since a rank-zero PT-semistable object is just a Gieseker semistable sheaf (up to shift), the valuative criterion for completeness for rank-zero PT-semistable objects is a classical result (see \cite[Theorem 2.B.1]{HL}, for example).

\begin{lemma}\label{lemma-PTsscharacterise}
Let $X$ be a smooth projective three-fold over $k$, and let $E \in \Ap (X)$.  If $E$ has nonzero rank and is PT-semistable, then $H^0(E)$ is 0-dimensional, and $H^{-1}(E)$ is torsion-free and semistable in $\Coh_{3,1}(X)$; in particular, $H^{-1}(E)$ is $\mu$-semistable.
\end{lemma}

\begin{proof}
If $H^{-1}(E)$ has a torsion subsheaf $T \subset H^{-1}(E)$, then $T$ must be pure 2-dimensional.  Then
\[
0 \to T[1] \to H^{-1}(E)[1] \to (H^{-1}(E)/T)[1] \to 0
\]
 is a short exact sequence in $\Ap$, since all terms are in $\Coh_{\geq 2}(X)[1]$.  So we have an injection $T[1] \hookrightarrow H^{-1}(E)[1]$ in $\Ap$.  On the other hand, we also have the injection $H^{-1}(E)[1] \hookrightarrow E$, so $T[1]$ is a subobject of $E$ in $\Ap$.  However, $T[1]$ is 2-dimensional, and $E$ is 3-dimensional, so by the definition of PT-stability (see Figure \ref{figure-PTstab}), $E$ is unstable, a contradiction.  Therefore $H^{-1}(E)$ must be torsion-free.

On the other hand, $H^0(E)$ must be 0-dimensional, or else $E$ would have a destabilising quotient, namely the surjection $E \twoheadrightarrow H^0(E)$ in $\Ap$.

Now suppose $H^{-1}(E)$ is not semistable in $\Coh_{3,1}(X)$.  Then there is a nonzero proper subsheaf $F \subset H^{-1}(E)$ such that $p_{3,1} (F) > p_{3,1} (H^{-1}(E))$, and the cokernel $H^{-1}(E)/F$ is torsion-free.  Then $F[1] \hookrightarrow H^{-1}(E)[1] \hookrightarrow E$ in $\Ap$, and $F[1]$ destabilises $E$, a contradiction.  Hence $H^{-1}(E)$ is semistable in $\Coh_{3,1}(X)$.
\end{proof}

The following proposition shows that the set of PT-semistable objects of arbitrary, fixed Chern classes is bounded.

\begin{pro}\label{pro-boundedness}
Let $X$ be a smooth projective three-fold, and $\{I_s \in \Ap (X)\}_{s \in S}$ the  set of PT-semistable objects with $ch = (-r,-d,\beta,n)$ indexed by the set $S$.  Then there is a scheme $B$ of finite type over $k$, and a coherent sheaf $F$ on $X \times B$, such that each $I_s$ is quasi-isomorphic to a complex $[E^{-1}_s \to E^0_s]$, where $E^{-1}_s, E^0_s$ occur as fibres of $F$ over closed points of $B$.
\end{pro}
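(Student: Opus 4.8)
The plan is to use the structural results already established to split each object into two sheaf-theoretic pieces, bound each piece into a family of finite type, and then reassemble. If $r=0$, then $E$ has rank zero and by Lemma \ref{lemma-rankzeroPTssobj} it is, up to shift, a Gieseker-semistable sheaf of fixed Chern classes, so the classical boundedness of Gieseker-semistable sheaves applies and we are done. So assume $E$ has nonzero rank. By Lemma \ref{lemma-PTsscharacterise}, $H^0(E)$ is $0$-dimensional, say of length $\ell$, and $G := H^{-1}(E)$ is torsion-free and $\mu$-semistable. Writing $ch(H^0(E)) = (0,0,0,\ell)$ and using $ch(E) = ch(H^0(E)) - ch(G)$, the invariants $ch_0(G), ch_1(G), ch_2(G)$ are all determined by the fixed data $(-r,-d,\beta,n)$, while $ch_3(G) = \ell - n$. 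In particular $\ell \geq 0$ gives the a priori lower bound $ch_3(G) \geq -n$.

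The heart of the argument is to bound $\ell$ from above, equivalently $ch_3(G)$ from above; once this is done, $G$ has Chern classes ranging over a finite set, and the families $\{G\}$ ($\mu$-semistable, fixed Chern classes) and $\{H^0(E)\}$ ($0$-dimensional of bounded length) are each bounded by classical results, leaving only the reassembly. To bound $ch_3(G)$, I would pass to the reflexive hull: since $X$ is a smooth threefold and $G$ is torsion-free, $G \hookrightarrow G^{\vee\vee}$ with $G^{\vee\vee}$ reflexive and $\mu$-semistable of the same rank and $c_1$, and with $T := G^{\vee\vee}/G$ supported in codimension $\geq 2$. The Bogomolov inequality applied to $G^{\vee\vee}$ bounds $ch_2(G^{\vee\vee})\cdot H = (ch_2(G) + ch_2(T))\cdot H$ from above; since $ch_2(G)$ is fixed and $ch_2(T)$ is the effective class of the one-dimensional support of $T$, this bounds the degree of $\Supp T$. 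Combined with the fixed $ch_0,ch_1,ch_2$ of $G$ and the lower bound $ch_3(G)\geq -n$, I would invoke the boundedness of $\mu$-semistable sheaves (in the form given by Maruyama and Langer) to conclude that $\{G\}$ is a bounded family; boundedness then forces $ch_3(G)$, hence $\ell$, to take only finitely many values.

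I expect this upper bound on $\ell$ to be the main obstacle: semistability alone does not obviously prevent $H^0(E)$ from being a long zero-dimensional sheaf, so the bound must be extracted from the interaction between the singularities of $G$ and the extension class. A useful supplementary input is that, since $E$ has positive rank, it can have no $0$-dimensional subobject in $\Ap$ (such a subobject would have strictly larger phase and would destabilise $E$, contradicting Lemma \ref{lemma-mds-existence}); translating $\Hom_{\Ap}(W,E)=0$ for $0$-dimensional $W$ through the triangle $G[1]\to E\to H^0(E)\to G[2]$ shows that the connecting map $\Hom(W,H^0(E))\to \Ext^2(W,G)$ is injective, which in particular confines $\Supp H^0(E)$ to the locus where $G$ fails to be locally free. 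This should let me control $\ell$ in terms of the now-bounded singularities of $G$.

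Finally, for the reassembly: having bounded $\{G\}$ and $\{H^0(E)\}$, I would construct a scheme $B$ of finite type parametrising the two pieces together with the gluing data --- the extension class lives in $\Ext^2(H^0(E),G)$, which globalises to a coherent relative $\EExt$ over the product of the two parameter spaces --- and form a universal object. Using boundedness to choose a uniform twist $N$ for which all the relevant sheaves are globally generated with vanishing higher cohomology, I would produce uniform surjections from sums of $\OO_X(-N)$ and thereby represent each $I_s$ by a two-term complex $[E^{-1}_s \to E^0_s]$ whose terms are fibres, over closed points of $B$, of a single coherent sheaf $F$ on $X\times B$. The only care needed is to carry out these constructions in families, so that the presentation varies algebraically over $B$.
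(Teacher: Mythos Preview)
Your second paragraph already contains the complete argument for bounding $\ell$, and your third paragraph is unnecessary worry. With $ch_0,ch_1,ch_2$ of $G=H^{-1}(E)$ fixed and $ch_3(G)\geq -n$, Maruyama's boundedness theorem for $\mu$-semistable torsion-free sheaves \cite[Theorem~4.8]{MaruBFTFS} applies directly --- it needs only a \emph{lower} bound on $ch_3$ --- and gives boundedness of $\{G\}$; then $ch_3(G)$, hence $\ell$, takes only finitely many values. The Bogomolov/reflexive-hull analysis you sketch is essentially part of the proof of Maruyama's theorem, not an extra ingredient, and no interaction with the extension class or the singular locus of $G$ is needed to bound $\ell$. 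The injectivity of $\Hom(W,H^0(E))\to\Ext^2(W,G)$ is correct but plays no role.

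The genuine gap is in the reassembly. You propose uniform surjections from $\OO_X(-N)^{\oplus a}$ onto the pieces, but a surjection onto $G$ does not by itself produce a two-term complex representing $E$: the gluing class lives in $\Ext^2(Q,G)$, which is two Yoneda steps, not one. The paper's device goes the other direction. Since $\{G\}$ is bounded, so is $\{G^\vee\}$; fix a single locally free sheaf $E^\vee$ (take $E=\OO(-\rho)^{\oplus N}$) with uniform surjections $E\twoheadrightarrow G^\vee$, hence uniform injections $G\hookrightarrow G^{\vee\vee}\hookrightarrow E^\vee$. Writing $R_s=E^\vee/G$, the vanishing $\Ext^i(Q_s,E^\vee)=0$ for $i=1,2$ (as $Q_s$ has codimension $3$) yields an isomorphism $\Ext^2(Q_s,G)\cong\Ext^1(Q_s,R_s)$, so the $\Ext^2$-class becomes an honest sheaf extension $0\to R_s\to F_s\to Q_s\to 0$, and the Yoneda splice realises $I_s$ as the two-term complex $[E^\vee\to F_s]$. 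Since $\{R_s\}$ and $\{Q_s\}$ are now both bounded, so is $\{F_s\}$, and \cite[Lemma~1.7.6]{HL} finishes.
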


\begin{proof}
When $r=0$, this is a classical result by Lemma \ref{lemma-rankzeroPTssobj}.  Suppose $r \neq 0$.  Fix any $s \in S$.  Note that from the short exact sequence
\begin{equation}\label{es1}
H^{-1}(I_s)[1] \to I_s \to H^0(I_s) \to H^{-1}(I_s)[2],
\end{equation}

we  know
\begin{gather*}
ch_0(H^{-1}(I_s)) = r, ch_1(H^{-1}(I_s)) = d, ch_2(H^{-1}(I_s)) = -\beta ,
\end{gather*}
and since $H^0(I_s)$ is 0-dimensional by Lemma \ref{lemma-PTsscharacterise}, we have $ch_3(H^0(I_s))=\text{length}(H^0(I_s))\geq 0$, and so
\[
ch_3(H^{-1}(I_s)) \geq -n.
\]
Now, by \cite[Theorem 4.8]{MaruBFTFS}, we know that the set $\{H^{-1}(I_s)\}_{s \in S}$ is bounded.

Hence the set $\{H^{-1}(I_s)^\vee \}_{s\in \mathcal{S}}$ of duals of all the $H^{-1}(I_s)$ is also bounded.  Since the $H^{-1}(I_s)^\vee$ are $\mu$-semistable, we can find fixed $\rho, N$ such that there is a surjection
\[
  E := \OO (-\rho)^{\oplus N} \twoheadrightarrow H^{-1}(I_s)^\vee
\]
for any $s \in S$.  This induces
\[
 H^{-1}(I_s) \hookrightarrow H^{-1}(I_s)^{\vee\vee} \hookrightarrow E^\vee,
\]
hence an exact sequence of coherent sheaves
\begin{equation}\label{es3}
0 \to H^{-1}(I_s) \to E^\vee \to R_s \to 0
\end{equation}
where $R_s$, depending on $s$, is the kernel.  And the set $\{R_s\}_{s \in S}$  is bounded.

By rotating \eqref{es1}, we get the exact triangle
\[
 H^0(I_s)[-1] \to H^{-1}(I_s)[1] \to I_s \to H^0(I_s),
\]
from which we see  $I_s$ is the cone of a morphism
\begin{align*}
\al &\in \Hom (H^0(I_s)[-1], H^{-1}(I_s)[1]) \\
&\cong \Hom (H^0(I_s),H^{-1}(I_s)[2]) \\
&=\Ext^2 (Q_s,H^{-1}(I_s)) \text{ if we write $Q_s := H^0(I_s)$}.
\end{align*}

Applying $\Hom (Q_s,-)$ to the short exact sequence \eqref{es3}, we get
 the exact sequence of cohomology
\[
\Ext^1 (Q_s,E^\vee) \to \Ext^1 (Q_s,R) \to \Ext^2 (Q_s,H^{-1}(I_s)) \to \Ext^2 (Q_s, E^\vee)
\]
where $\Ext^i (Q_s,E^\vee)\cong \Ext^i (Q_s\otimes E, \OO_X) =0$ for $i=1,2$ since $Q_s \otimes E$ is of codimension 3.  So we get an isomorphism $\Ext^1 (Q_s,R_s) \cong \Ext^2 (Q_s,H^{-1}(I_s))$.  This means that $\al$ is represented by the Yoneda product of the short exact sequence \eqref{es3} and an extension
\begin{equation}\label{es4}
0 \to R_s \to F_s \to Q_s \to 0.
\end{equation}

  Overall, the two-term complex $\{E^\vee \to F_s\}$, with $F$ at degree 0, obtained from composing the maps in \eqref{es3} and \eqref{es4}, is quasi-isomorphic to the cone of $\al$.  Since $I_s$ is  the cone of $\al$, we get that
\[
I_s \cong \{E^\vee \to F\}.
\]

Since the set of all $H^{-1}(I_s)$ is bounded, there is only a finite number of possibilities for the Hilbert polynomial of $H^{-1}(I_s)$.  And so from \eqref{es1}, the length of $Q=H^0(I_s)$ is also bounded.  Since 0-dimensional schemes on $X$ are parametrised by Hilbert schemes, the set $\{Q_s\}_{s \in S}$ is also bounded.  Hence from \eqref{es4}, the set $\{F_s\}_{s \in S}$ is bounded.  Hence the moduli space of PT-semistable objects with $ch=(-r,-d,\beta,n), r >0$, is bounded, and the statement of the proposition follows from \cite[Lemma 1.7.6]{HL}
\end{proof}

\section{Completeness of the Heart}

The goal of this section is to prove that, given an object $E_K \in \Ap_K$, we can extend it to an $R$-flat family of objects in $\Ap$.  This is the valuative criterion for completeness for the heart.

\subsection{Extending Semistable Objects}

Since every PT-semistable object of nonzero rank in $\Ap_K$ lies in the category $\langle \Coh_{\leq 0}(X_K), \Coh_{\geq 3}(X_K)[1]\rangle$ (Lemma \ref{lemma-PTsscharacterise}), we start with a weaker version of  completeness of the heart.

\begin{theorem}\label{theorem-completenessofheart-1}
Let $X$ be a smooth projective three-fold over $k$.  Given any object
 \[
 E_K \in \langle \Coh_{\leq d}(X_K), \Coh_{\geq 3}(X_K)[1]\rangle \subset D^b(X_K) \textnormal{ (where $0\leq d < 3$)},
 \]
 there exists a 3-term complex $\widetilde{E}^\bullet$ of $R$-flat coherent sheaves with $R$-flat cohomology on $X_R$ such that:
\begin{itemize}
 \item the generic fibre $j^\ast (\widetilde{E}^\bullet) \cong E_K$ in $D^b(X_K)$;
 \item the central fibre $L\iota^\ast (\widetilde{E}^\bullet) \in \langle \Coh_{\leq d}(X_k), \Coh_{\geq 3}(X_k)[1]\rangle \subset D^b(X_k)$.
\end{itemize}
\end{theorem}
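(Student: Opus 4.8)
The plan is to extend $E_K$ by extending its two cohomology sheaves separately together with the datum that glues them, and then to read off the central fibre from base change. By the tilting remark in Section~\ref{section-tstructures}, $E_K$ sits in a short exact sequence in $\Ap_K$
\[
0 \to F_K[1] \to E_K \to Q_K \to 0, \qquad F_K := H^{-1}(E_K)\in \Coh_{\geq 3}(X_K),\quad Q_K := H^0(E_K)\in \Coh_{\leq d}(X_K),
\]
and $E_K$ is determined up to isomorphism by the connecting class $\al \in \Ext^2_{X_K}(Q_K,F_K)$. So it suffices to produce $R$-flat extensions $\wt F, \wt Q$ of $F_K, Q_K$ on $X_R$ with $\wt F|_k$ torsion-free and $\wt Q|_k$ of dimension $\le d$, together with a class in $\Ext^2_{X_R}(\wt Q,\wt F)$ restricting to a unit multiple of $\al$ on the generic fibre; realising such a class by a Yoneda $2$-extension with $R$-flat terms will give the required complex.

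Extending $Q_K$ is routine: since $j$ is an open immersion I would choose any coherent sheaf on $X_R$ with generic fibre $Q_K$ and divide out its $\pi$-power torsion to obtain an $R$-flat $\wt Q$ with $j^\ast \wt Q \cong Q_K$. As the Hilbert polynomial is constant in a flat family, $\dimension \Supp(\wt Q|_k) = \dimension \Supp(Q_K) \le d$, so $\wt Q|_k \in \Coh_{\leq d}(X_k)$ for free. The delicate point is extending $F_K$: an arbitrary $R$-flat extension need not have torsion-free central fibre, and here no torsion in $\wt F|_k$ can be tolerated (it would put a subsheaf of the wrong dimension into the $H^{-1}$ of the central fibre, and for $d<2$ it cannot be absorbed into $H^0$). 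I would obtain a suitable $\wt F$ by Langton's method of elementary modifications \cite{HL}: starting from any $R$-flat extension, repeatedly replace it by the kernel of its reduction to $(\,\cdot\,|_k)/\textnormal{torsion}$, and argue that the procedure terminates at an $\wt F$ with $j^\ast\wt F\cong F_K$ and $\wt F|_k$ torsion-free, i.e.\ $\wt F|_k \in \Coh_{\geq 3}(X_k)$. \emph{This Langton-type termination is the main obstacle.}

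With $\wt F,\wt Q$ in hand I would lift the gluing class. Since $X_R$ is regular, $\Ext^2_{X_R}(\wt Q,\wt F)$ is a finitely generated $R$-module and flat base change along $j$ gives $\Ext^2_{X_R}(\wt Q,\wt F)\otimes_R K \cong \Ext^2_{X_K}(Q_K,F_K)$; clearing denominators produces $\beta \in \Ext^2_{X_R}(\wt Q,\wt F)$ with $j^\ast\beta = \pi^N\al$ for some $N\ge 0$, and as $\pi^N$ is a unit over $K$ this alters the generic-fibre object only up to isomorphism. To realise $\beta$ with flat terms, choose by Serre vanishing a surjection $P := \OO_{X_R}(-\rho)^{\oplus n} \twoheadrightarrow \wt Q$ with $\rho \gg 0$, so that $\Ext^{\ge 1}_{X_R}(P,\wt F)=0$. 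Writing $\wt Q' := \kernel(P \to \wt Q)$ (which is $R$-flat, being a subsheaf of a flat sheaf), the long exact sequence identifies $\beta$ with a class in $\Ext^1_{X_R}(\wt Q',\wt F)$, hence with an honest extension $0\to \wt F \to \wt M \to \wt Q' \to 0$ whose middle term $\wt M$ is $R$-flat. Splicing gives a four-term exact sequence $0\to \wt F \to \wt M \to P \to \wt Q \to 0$ of $R$-flat sheaves.

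Finally I would set $\wt E^\bullet$ to be the three-term complex $[\,0 \to \wt M \to P\,]$ in degrees $-2,-1,0$, with differential $\wt M \twoheadrightarrow \wt Q' \hookrightarrow P$. By construction its cohomology sheaves are $H^{-1}(\wt E^\bullet)=\wt F$ and $H^0(\wt E^\bullet)=\wt Q$, both $R$-flat, so $\wt E^\bullet$ has $R$-flat cohomology; moreover $j^\ast \wt E^\bullet$ realises the class $\pi^N\al$, hence $j^\ast\wt E^\bullet \cong E_K$. For the central fibre, $R$-flatness of the terms gives $L\iota^\ast\wt E^\bullet = \iota^\ast \wt E^\bullet$, while $R$-flatness of the cohomology gives the base-change isomorphism $H^i(L\iota^\ast\wt E^\bullet) \cong H^i(\wt E^\bullet)|_k$. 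Thus $H^{-1}(L\iota^\ast\wt E^\bullet)=\wt F|_k \in \Coh_{\geq 3}(X_k)$ and $H^0(L\iota^\ast\wt E^\bullet)=\wt Q|_k\in \Coh_{\leq d}(X_k)$, placing $L\iota^\ast\wt E^\bullet$ in $\langle \Coh_{\leq d}(X_k), \Coh_{\geq 3}(X_k)[1]\rangle$, as required.
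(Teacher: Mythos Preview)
Your proof is correct and follows the same conceptual skeleton as the paper's: extend $H^{-1}(E_K)$ via Langton so that its central fibre is torsion-free, extend $H^0(E_K)$ flatly, lift the gluing datum to $X_R$, and read off the central fibre by base change. The paper, however, executes this more concretely. It begins with an explicit two-term representative $[A_K \overset{s_K}{\to} B_K]$, extends $B_K$ via properness of Quot, extends the image/cokernel maps by hand using its ``Construction~A'' (an elementary $\pi$-power adjustment to force surjectivity), and only then brings in Langton for $\kernel(s_K)$; the final three-term complex has a genuinely nonzero degree~$-2$ term built from a locally free sheaf surjecting onto the Langton extension. Your route is more homological: you never touch a representative of $E_K$, instead lifting the class $\alpha \in \Ext^2_{X_K}(Q_K,F_K)$ through the base-change isomorphism $\Ext^2_{X_R}(\wt Q,\wt F)\otimes_R K \cong \Ext^2_{X_K}(Q_K,F_K)$ and then realising it as a Yoneda $2$-extension via a Serre-vanishing twist. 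This is cleaner and in fact closer in spirit to the paper's later Lemma~\ref{extension-completeness} (used for the ``bare-bone'' Proposition~\ref{pro-completenessofheart-1-v2}) than to its proof of the present theorem. What the paper's hands-on approach buys is an explicit genuine three-term complex; your $[\,0 \to \wt M \to P\,]$ is really two-term padded by zero, which satisfies the letter of the statement but not quite its spirit.

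One remark: you flag the Langton termination as ``the main obstacle'', but the paper (and you) are entitled simply to cite it --- it is \cite[Proposition~6]{Langton} (or the treatment in \cite{HL}), and the paper invokes it without further argument. So there is no missing step here.
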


\paragraph{Remark.}  In fact, this theorem says a little more than we really need in the rest of this paper.  After presenting a proof to this theorem, we will state and prove a bare-bone version of it.

\smallskip

Here is a construction that will be useful in the proof of Theorem \ref{theorem-completenessofheart-1}: Suppose $F, G$ are $R$-flat coherent sheaves on $X_R$, and $\phi : F \to G$ is a sheaf morphism such that $j^\ast (\phi) : j^\ast F \twoheadrightarrow j^\ast G$ is a surjection in $\Coh (X_K)$.  Consider the  exact sequence in $\Coh (X_R)$
\[
F \overset{\phi}{\to} G \to \cokernel (\phi) \to 0.
\]
Since $j^\ast \phi$ is surjective, we know $j^\ast (\cokernel (\phi))=0$, implying that $\cokernel \phi$ is supported on $X_m$ for some $m>0$.    Hence for large enough $s$, $\pi^m G$ is taken to zero by the quotient map $G \twoheadrightarrow \cokernel (\phi)$, i.e.\ $\pi^m G \subseteq \image \phi$.

\noindent\textit{Construction A.} Given the setup above, let $I$ be the pullback of the surjection $F \twoheadrightarrow \image \phi$ along the injection $\pi^m G \hookrightarrow \image \phi$, so that we have the pullback square of coherent sheaves on $X_R$

\begin{equation*}
\xymatrix{
  F \arsurj[r]^\phi & \image \phi  \\
  I \arinj[u] \arsurj[r] & \pi^m G \arinj[u]
}.
\end{equation*}
Note that $I$ is again an $R$-flat coherent sheaf.  On the other hand, for any $R$-flat coherent sheaf $A$ on $X_R$, the map $A \overset{\pi^m}{\to} \pi^m A$ of multiplication by $\pi^m$ is an isomorphism; denote the inverse of this map by multiplication by $1/\pi^m$.  Using such an isomorphism, we can construct a surjection $\phi'$ defined as the composition
\[
 \phi' : I \twoheadrightarrow \pi^m G \overset{\thicksim}{\to} G.
\]
Since $\pi^m F \subseteq I \subseteq F$ (to see the first inclusion, note that $\phi : F \to G$ takes $\pi^m F$ into $\pi^m G$, and so $\pi^m F \subseteq I$), we have $j^\ast I = j^\ast F$.  From now on, we will say `Construction A' to mean replacing a morphism of $R$-flat coherent sheaves $\phi : F \to G$ on $X_R$ by a surjection $\phi' : I \twoheadrightarrow G$ such that $j^\ast (\phi')$ is the composition of $j^\ast (\phi)$ followed by multiplication by $1/\pi^m$, for a suitable $m$.

%note to myself: the last equality really is an equality

\begin{proof}[Proof of Theorem \ref{theorem-completenessofheart-1}]
Suppose $E_K$ is represented by a two-term complex
\[
  E_K = [A_K \overset{s_K}{\to} B_K]
\]
where $A_K, B_K$ are coherent sheaves on $X_K$.  We can decompose this complex into two short exact sequences (which are the bottom row and left column of the following diagram)
\begin{equation*}
\xymatrix{
  \kernel (s_K) \arinj[d] & & \\
  A_K \ar[r]^{s_K} \arsurj[d]_{q^{-1}} & B_K \areq[d] & \\
  \image (s_K) \arinj[r] & B_K \arsurj[r]^<(0.3){q^0} & \cokernel (s_K)
}
\end{equation*}
The spirit of the proof is to extend the two short exact sequences to short exact sequences of $R$-flat coherent sheaves on $X_R$.

On $X_K$, we have the ample line bundle $\OO_X (1) \otimes_k K$.  Take any surjection $\OO_X(-m)\otimes_k K \twoheadrightarrow B_K$.  Then, by properness of the quot scheme, we can extend $B_K$ to an $R$-flat coherent sheaf $B_R$ on $X_R$.  Similarly, we can extend $\cokernel (s_K)$ to an $R$-flat sheaf $\widetilde{\cokernel (s_K)}$ on $X_R$, and subsequently extend $q^0$ to a morphism of $\OO_{X_R}$-modules $\bar{q^0} : B_R \to \wt{\cokernel (s_K)}$ such that $j^\ast (\bar{q^0}) = q^0$, and $L\iota^\ast \bar{q^0}$ is nonzero.  Let $\wt{q^0}$ be the map $B_R \to \image (\bar{q^0})$ obtained by restricting the codomain of $\bar{q^0}$.  Then $\image (\bar{q^0})$ is still $R$-flat, and we still have $j^\ast (\wt{q^0}) = q^0$ and $L\iota^\ast (\wt{q^0}) \neq 0$.

Now, the coherent sheaf $\kernel (\wt{q^0})$ is $R$-flat and restricts to $\kernel (q^0) = \image (s_K)$ on $X_K$.

Choose any $R$-flat extension $A_R$ of $A_K$ on $X_R$, extend $q^{-1}$ to a morphism $A_R \to \kernel (\wt{q^0})$ on $X_R$, and apply Construction A to obtain a surjection $\wt{q^{-1}} : I \twoheadrightarrow \kernel (\wt{q^0})$, where $I$ is an $R$-flat subsheaf of $A_R$.
Here, $j^\ast (\wt{q^{-1}}) = \frac{1}{\pi^m} \circ q^{-1}$ for some $m$, and so $\wt{q^{-1}}$ does not quite restrict to $q^{-1}$ on $X_K$.  To get around this, simply replace the differential $s_K$ by $\frac{1}{\pi^m} \circ s_K$ (this replaces $E_K$ by a quasi-isomorphic complex).  Then, we truly have $j^\ast (\wt{q^{-1}} )= q^{-1}$.

 Now, $\kernel (\wt{q^{-1}})$ restricts to $\kernel (s_K)$ on $X_K$, but $\iota^\ast (\kernel (\wt{q^{-1}}))$ may not be torsion-free on $X_k$.  So we cannot just take the 2-term complex $[I \to B_R]$, with the differential being $\wt{q^{-1}}$ followed by the inclusion $\kernel (\wt{q^0}) \hookrightarrow B_R$, to be our extension of $E_K$ on $X_R$, for then the derived restriction to $X_k$ may not be in $\langle \Coh_{\leq d}(X_k), \Coh_{\geq 3}(X_k)[1]\rangle$.

To rectify this, use Langton's construction \cite[Proposition 6]{Langton} to extend $\kernel (s_K)$ to a torsion-free sheaf $\wt{\kernel (s_K)}$ on $X_R$ such that its pullback to $X_k$ is still a torsion-free sheaf.  Then, choose any surjection $f : F \twoheadrightarrow \wt{\kernel (s_K)}$ where $F$ is a locally free sheaf on $X_R$.  Also, extend the identity map of $\kernel (s_K)$ to a morphism $\wt{1} : \kernel (\wt{q^{-1}}) \to \wt{\kernel (s_K)}$ on $X_R$.  Then, define $M$ to be the kernel of $F \oplus \kernel (\wt{q^{-1}}) \overset{(f,\wt{1})}{\twoheadrightarrow} \wt{\kernel (s_K)}$.  Then $M$ is an $R$-flat coherent sheaf on $X_R$.

So far, we have constructed the following commutative diagram in which each three-term straight-line sequence is a short exact sequence, and all the terms are $R$-flat sheaves:
\begin{equation*}
\xymatrix{
 M\arinj[r]^<(0.2){i_1} &  F \oplus \kernel (\wt{q^{-1}}) \arinj[d]^{1_F\oplus i_2} \arsurj[r]^{(f,\wt{1})}  &  \wt{\kernel (s_K)} & \\
 & F \oplus I \ar[r] \arsurj[d]_{(0,\wt{q^{-1}})} & B_R \areq[d] & \\
&  \kernel (\wt{q^0})  \arinj[r]^{i_3} & B_R \arsurj[r]^<(0.3){\wt{q^0}} & \image (\bar{q^0})
}.
\end{equation*}
Here, $i_1, i_2$ and $i_3$ are the canonical inclusions.

If we define $\wt{E}^{-2} := M$, $\wt{E}^{-1} := F \oplus I$ and $\wt{E}^0 := B_R$, and take $\wt{s} = i_3 \circ (0,\wt{q^{-1}})$, then
\[
\wt{E}^\bullet = [\wt{E}^{-2} \hookrightarrow \wt{E}^{-1} \overset{\wt{s}}{\to} \wt{E}^0]
 \]
 is a 3-term complex of $R$-flat coherent sheaves.  Therefore, $L\iota^\ast \wt{E}^\bullet = \iota^\ast \wt{E}^\bullet$.  Moreover, from our construction, all the cohomology sheaves of $\wt{E}^\bullet$ are flat over $R$; as a consequence, the cohomology sheaves of $\iota^\ast \wt{E}^\bullet$ are simply the pullback of the  cohomology sheaves of $\wt{E}^\bullet$.  And so $L\iota^\ast \wt{E}^\bullet$  is an object in the heart $\langle \Coh_{\leq d}(X_k), \Coh_{\geq 3}(X_k)[1]\rangle$.

It remains to show that $j^\ast \wt{E}^\bullet$ is quasi-isomorphic to $E_K$.  This is not hard to see.  Note that $j^\ast \wt{E}^\bullet = [j^\ast M \hookrightarrow j^\ast F \oplus j^\ast I \overset{j^\ast \wt{s}}{\to} j^\ast B_R]$ where $j^\ast I = A_K$, and $j^\ast \wt{s} = (0,s_K)$.  Define a chain map $c^\bullet : E_K \to j^\ast \wt{E}^\bullet$ by the commutative diagram
\begin{equation*}
\xymatrix{
 j^\ast M \arinj[r] &  j^\ast F \oplus j^\ast I \ar[r]^{j^\ast \wt{s}} & j^\ast B_R \\
0 \ar[u] \ar[r] &   A_K \ar[u]^{c^{-1}} \ar[r]^{s_K} & B_K \ar[u]^{c^0}
}
\end{equation*}
where $c^{-1}$ is the canonical injection into the second factor, and $c^0$ is the identity map. That $H^0(c^\bullet)$ is an isomorphism is clear.  The map
\[
  H^{-1}(c^\bullet) : \kernel (s_K) \to \frac{\kernel (j^\ast \wt{s})}{\image j^\ast (i_1)} \cong j^\ast \wt{\kernel (s_K)}
\]
is an injection between two isomorphic coherent sheaves on $X_K$, a projective scheme over $K$, and so is an isomorphism.  Therefore, $c^\bullet$ is a quasi-isomorphism, and so $j^\ast \wt{E}^\bullet \cong E_K$.  This completes the proof of the theorem.
\end{proof}

As mentioned, we really only need the following bare-bone version of Theorem \ref{theorem-completenessofheart-1}:

\begin{pro}\label{pro-completenessofheart-1-v2}
Let $X$ be a smooth projective three-fold over $k$.  Given any object
 \[
 E_K \in \langle \Coh_{\leq d}(X_K), \Coh_{\geq 3}(X_K)[1]\rangle \subset D^b(X_K) \textnormal{ (where $0\leq d < 3$)},
 \]
 there exists an object  $\widetilde{E} \in D^b(X_R)$ such that:
\begin{itemize}
 \item the generic fibre $j^\ast (\widetilde{E}) \cong E_K$ in $D^b(X_K)$;
 \item the central fibre $L\iota^\ast (\widetilde{E}^\bullet) \in \langle \Coh_{\leq d}(X_k), \Coh_{\geq 3}(X_k)[1]\rangle \subset D^b(X_k)$.
\end{itemize}
\end{pro}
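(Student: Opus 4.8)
The plan is to observe that this proposition is merely a weakening of Theorem~\ref{theorem-completenessofheart-1}, which we have just established, so that almost nothing remains to be done. Indeed, the theorem produces a $3$-term complex $\wt{E}^\bullet$ of $R$-flat coherent sheaves on $X_R$, with $R$-flat cohomology, whose generic fibre satisfies $j^\ast(\wt{E}^\bullet)\cong E_K$ and whose central derived fibre satisfies $L\iota^\ast(\wt{E}^\bullet)\in\langle \Coh_{\leq d}(X_k),\Coh_{\geq 3}(X_k)[1]\rangle$. These are exactly the two conditions demanded of $\wt{E}$ here. Since any bounded complex of coherent sheaves on $X_R$ is in particular an object of $D^b(X_R)$, I would simply set $\wt{E}:=\wt{E}^\bullet$ and be done: the remaining hypotheses of the theorem (that $\wt{E}$ be representable by a $3$-term complex, that its terms be $R$-flat, and that its cohomology be $R$-flat) are precisely the additional structure that we are now free to forget.

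For readers who prefer an argument not routing through the full strength of Theorem~\ref{theorem-completenessofheart-1}, I would instead sketch a direct construction. Using the short exact sequence $0\to H^{-1}(E_K)[1]\to E_K\to H^0(E_K)\to 0$ in the heart, one extends the two cohomology sheaves separately. The sheaf $H^0(E_K)\in\Coh_{\leq d}(X_K)$ extends, via properness of the Quot scheme, to an $R$-flat sheaf $Q_R$ on $X_R$; since an $R$-flat coherent sheaf on the projective $R$-scheme $X_R$ has the same Hilbert polynomial on its two fibres, the central fibre $\iota^\ast Q_R$ again has dimension $\leq d$, i.e.\ lies in $\Coh_{\leq d}(X_k)$. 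The sheaf $H^{-1}(E_K)$ is torsion-free, because on a three-fold $\Coh_{\geq 3}$ is exactly the category of torsion-free sheaves, so Langton's construction \cite{Langton} extends it to a torsion-free sheaf $G_R$ on $X_R$ whose central fibre $\iota^\ast G_R$ is again torsion-free, hence lies in $\Coh_{\geq 3}(X_k)$. One would then assemble $\wt{E}$ as the cone of a lift to $X_R$ of the class in $\Ext^2_{X_K}(H^0(E_K),H^{-1}(E_K))$ defining the extension above.

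In this alternative route, the main obstacle is the final step: producing a class in $\Ext^2_{X_R}(Q_R,G_R)$ whose image under $j^\ast$ recovers the given extension class on $X_K$, which amounts to controlling the restriction map on $\Ext^2$. This is precisely the subtlety that the proof of Theorem~\ref{theorem-completenessofheart-1} circumvents by working with an explicit complex and Construction~A rather than with abstract extension groups. For this reason I expect the clean argument to be the corollary route of the first paragraph, in which there is in effect no obstacle at all, the genuine work having already been carried out in the theorem.
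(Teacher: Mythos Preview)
Your first route is correct: the proposition is literally a weakening of Theorem~\ref{theorem-completenessofheart-1}, so quoting it suffices. However, this is not the approach the paper takes, and the paper's route is worth knowing because it is what gets reused later.

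The paper in fact takes your second route, and the ``obstacle'' you flag is not a real obstacle. The paper packages the argument as Lemma~\ref{extension-completeness}: if two classes of objects $S_1, S_3$ each satisfy the valuative criterion, then so does the class of extensions $s_1 \to s_2 \to s_3 \to s_1[1]$. The point you were missing for the lifting of the extension class is the isomorphism
\[
\Hom_{D^b(X_R)}(F,G)\otimes_R K \;\cong\; \Hom_{D^b(X_K)}(j^\ast F, j^\ast G)
\]
(this is \cite[Lemma 3.18]{TodaLSOp}). Given the class $\alpha \in \Ext^2_{X_K}(H^0(E_K), H^{-1}(E_K))$, some $\pi^m\alpha$ therefore lies in the image of $\Ext^2_{X_R}(Q_R,G_R)$; call a preimage $\tilde\alpha$ and set $\wt{E} = \cone(\tilde\alpha)[-1]$. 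Even if $L\iota^\ast\tilde\alpha = 0$, the central fibre is then $L\iota^\ast G_R[1] \oplus L\iota^\ast Q_R$, which still lies in the heart. So no delicate control of the restriction map on $\Ext^2$ is needed; one only needs surjectivity after clearing a denominator, and that is automatic.

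What each approach buys: your corollary route is shorter here, but the extension-lemma route is the one that generalises. The paper immediately reuses Lemma~\ref{extension-completeness} to deduce completeness of the full heart $\Ap$ (Theorem~\ref{theorem-completenessofheart-3}) by induction along the HN filtration, where Theorem~\ref{theorem-completenessofheart-1} does not directly apply.
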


The proof of this proposition relies on one technical lemma:

\begin{lemma}\label{extension-completeness}
Suppose $S_1, S_3$ are two stacks of objects in $D^b(X)$ that satisfy the valuative criterion for completeness.  Suppose $S_2$ is a stack whose points are objects $s_2$  given by extensions of the form
\[
  s_1 \to s_2 \to s_3 \to s_1[1] \text{\quad in $D^b(X)$}
\]
where $s_1, s_3$ are points in $S_1, S_3$, respectively.  Then $S_2$ also satisfies the valuative criterion for completeness.
\end{lemma}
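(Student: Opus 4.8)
The plan is to reduce the completeness of $S_2$ to that of $S_1$ and $S_3$ by extending an extension triangle term by term; the only genuine work is to extend the connecting morphism (the extension class) from $X_K$ to $X_R$. Start with a $K$-point $s_{2,K}$ of $S_2$, i.e.\ an object of $D^b(X_K)$ fitting into an exact triangle
\[
s_{1,K} \to s_{2,K} \to s_{3,K} \overset{\delta_K}{\to} s_{1,K}[1]
\]
with $s_{1,K}$ a $K$-point of $S_1$ and $s_{3,K}$ a $K$-point of $S_3$. By the valuative criterion for completeness for $S_1$ and $S_3$, I can choose objects $\wt{s}_1, \wt{s}_3 \in D^b(X_R)$ that are $R$-points of $S_1$ and $S_3$, together with isomorphisms $j^\ast \wt{s}_1 \cong s_{1,K}$ and $j^\ast \wt{s}_3 \cong s_{3,K}$. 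It then remains to lift the class $\delta_K$ to a morphism over $X_R$, and to take the corresponding cone.

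To lift $\delta_K$, I would use that $X_R$ is projective over $\Spec R$, so that $M := \Hom_{D^b(X_R)}(\wt{s}_3, \wt{s}_1[1])$ is a finitely generated $R$-module. Since $j$ is flat, flat base change along $R \to K$ yields a canonical isomorphism $\Hom_{D^b(X_K)}(s_{3,K}, s_{1,K}[1]) \cong M \otimes_R K$, under which the restriction map $j^\ast$ becomes the localisation map $M \to M[\pi^{-1}]$ and $\delta_K$ corresponds to an element of $M[\pi^{-1}]$. Hence there are $n \geq 0$ and a morphism $\wt{\delta} \in M$ with $j^\ast \wt{\delta} = \pi^n \delta_K$ under the chosen identifications. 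The discrepancy by the unit $\pi^n$ is harmless: multiplication by $\pi^n$ is an automorphism of $s_{1,K}[1]$ in $D^b(X_K)$, so the extension classes $\delta_K$ and $\pi^n \delta_K$ have isomorphic cones.

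Now let $\wt{s}_2 \in D^b(X_R)$ complete an exact triangle $\wt{s}_1 \to \wt{s}_2 \to \wt{s}_3 \overset{\wt{\delta}}{\to} \wt{s}_1[1]$. Both $j^\ast$ and $L\iota^\ast$ are triangulated, so they carry this triangle to exact triangles over $X_K$ and $X_k$. On the generic fibre, $j^\ast \wt{s}_2$ completes the triangle with connecting map $\pi^n \delta_K$, so $j^\ast \wt{s}_2 \cong s_{2,K}$ by the previous paragraph. On the central fibre, $L\iota^\ast \wt{s}_2$ sits in a triangle whose outer terms $L\iota^\ast \wt{s}_1$ and $L\iota^\ast \wt{s}_3$ are the central fibres of the $R$-points $\wt{s}_1, \wt{s}_3$, hence points of $S_1$ and $S_3$; by the defining property of $S_2$ as the stack of such extensions, $L\iota^\ast \wt{s}_2$ is a point of $S_2$. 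Thus $\wt{s}_2$ is an $R$-point of $S_2$ restricting to $s_{2,K}$, which is the valuative criterion for completeness for $S_2$.

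The main obstacle is the lifting of $\delta_K$: there is no reason for $j^\ast$ to be surjective on the relevant $\Ext^1$ groups, so $\delta_K$ cannot be lifted on the nose. The resolution is the two-part observation above — finiteness of $\Hom$ over the proper base $\Spec R$ together with flat base change identifies the target $\Ext^1$ over $K$ with the localisation of the one over $R$, and clearing the denominator $\pi^n$ only rescales the extension class by a unit, which leaves the isomorphism type of the extension over $X_K$ unchanged.
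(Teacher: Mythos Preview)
Your argument is correct and follows essentially the same route as the paper: extend $s_1$ and $s_3$ using completeness of $S_1$ and $S_3$, use finite generation of $\Hom$ over $R$ together with base change to clear a power of $\pi$ from the connecting map, and take the cone. Your write-up is in fact slightly more careful than the paper's (you spell out why scaling by $\pi^n$ does not change the cone over $X_K$, and why the triangulated pullbacks land where they should).
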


\begin{proof}
Suppose we have an extension
\[
  s_1 \to s_2 \to s_3 \overset{\al}{\to} s_1[1]
\]
where $s_i \in S_i (\Spec K)$ for all $i$.  By hypothesis, for $i=1$ and $3$, there exist $R$-flat $\tilde{s_i} \in S_i (\Spec R)$ that restrict to $s_i$ over $\Spec K$.

Note that, for objects $F, G \in D^b(X_R)$, the $R$-module $\Hom_{D^b(X_R)} (F,G)$ is finitely generated and
\begin{equation}\label{HomX_RX_K}
   \Hom_{D^b(X_R)}(F,G) \otimes_R K \cong \Hom_{D^b(X_K)} (F \otimes_R K, G\otimes_R K)
\end{equation}
as in \cite[Lemma 3.18]{TodaLSOp}.  Therefore, there exists an integer $m \geq 0$ such that $\pi^m \al$ extends to a morphism $\tilde{\al} : \tilde{s}_3 \to \tilde{s}_1[1]$ over $\Spec R$.  Even though $L\iota^\ast \tilde{\al}$ may be zero, in which case $\cone (L\iota^\ast \tilde{\al})[-1] \cong L\iota^\ast \tilde{s}_1 \oplus L\iota^\ast \tilde{s}_3$ is a direct sum, this is of no concern to us: if we define $\tilde{s}_2$ to be $\cone (\al)[-1]$, then $L\iota^\ast \tilde{s}_2 \in S_2$, and $j^\ast (\tilde{s}_2) \cong j^\ast \cone (\al)[-1] \cong s_2$.  This shows that $S_2$ also satisfies the valuative criterion for completeness.
\end{proof}

\begin{proof}[Proof of Proposition \ref{pro-completenessofheart-1-v2}]
The stacks $\Coh_{\leq d}(X)$ and $\Coh_{\geq 3}(X)$ on a three-fold $X$ both satisfy the valuative criterion for completeness, so the proposition follows from Lemma \ref{extension-completeness}.
\end{proof}

We easily obtain:

\begin{coro}\label{extension-completeness-coro1}
Let $(\mathcal{T},\mathcal{F})$ be a stack of torsion theories in the sense of  \cite[Appendix]{BSMSKTS}, where $\mathcal{T}$ and $\mathcal{F}$ are substacks of a stack of abelian groups $\mathcal{A}$ both of which satisfy the valuative criterion for completeness.  Then the stack of tilted objects $\langle \mathcal{T}, \mathcal{F}[1]\rangle$ also satisfies the valuative criterion for completeness.
\end{coro}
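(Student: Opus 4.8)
The plan is to realise $\langle \mathcal{T}, \mathcal{F}[1]\rangle$ as a stack of extensions and then invoke Lemma \ref{extension-completeness} directly. First I would recall, from the Remark following the definition of tilting, that every object $E$ in the tilted heart sits in a short exact sequence
\[
0 \to H^{-1}(E)[1] \to E \to H^0(E) \to 0
\]
in $\langle \mathcal{T}, \mathcal{F}[1]\rangle$, with $H^{-1}(E) \in \mathcal{F}$ and $H^0(E) \in \mathcal{T}$; equivalently, an exact triangle $H^{-1}(E)[1] \to E \to H^0(E) \to H^{-1}(E)[2]$ in $D^b(X)$. Since $(\mathcal{T}, \mathcal{F})$ is a stack of torsion theories, this decomposition is functorial and holds in families, so $\langle \mathcal{T}, \mathcal{F}[1]\rangle$ is precisely the stack whose points are extensions of a point of $\mathcal{T}$ by a point of $\mathcal{F}[1]$.

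Next I would observe that the shifted stack $\mathcal{F}[1]$ again satisfies the valuative criterion for completeness. This is immediate because the shift $[1]$ is an autoequivalence of each of $D^b(X_R)$, $D^b(X_K)$ and $D^b(X_k)$ that commutes with $j^\ast$ and $L\iota^\ast$ and preserves $R$-flatness; hence any $R$-flat extension of $F_K \in \mathcal{F}$ over $\Spec R$ furnished by the completeness of $\mathcal{F}$ yields, after shifting, an $R$-flat extension of $F_K[1]$ over $\Spec R$ with the required generic and central fibres.

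With these two facts in hand, I would apply Lemma \ref{extension-completeness} with $S_1 = \mathcal{F}[1]$, $S_3 = \mathcal{T}$, and $S_2 = \langle \mathcal{T}, \mathcal{F}[1]\rangle$. The hypotheses of the lemma --- that $S_1$ and $S_3$ satisfy the valuative criterion, and that every point of $S_2$ is an extension $s_1 \to s_2 \to s_3 \to s_1[1]$ of a point of $S_3$ by a point of $S_1$ --- are exactly what the previous two paragraphs establish, and the conclusion of the lemma is precisely the assertion of the corollary.

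The only point requiring genuine care is the passage from the pointwise tilting decomposition to its validity in families: one must check that a stack of torsion theories in the sense of \cite[Appendix]{BSMSKTS} really does present the tilted objects as extensions at the level of the $R$- and $K$-valued points that feature in the valuative criterion, so that Lemma \ref{extension-completeness} applies verbatim. Once this compatibility is granted, the argument is a formal consequence of the lemma, which is why the corollary is easily obtained.
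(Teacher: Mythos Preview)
Your proposal is correct and follows essentially the same approach as the paper: the paper states the corollary with the phrase ``We easily obtain'' immediately after proving Proposition~\ref{pro-completenessofheart-1-v2} by the identical method (apply Lemma~\ref{extension-completeness} to the extension decomposition furnished by the torsion pair), so your argument simply spells out the details the paper leaves implicit.
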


\begin{coro}\label{XKssXkinheart}
Given a PT-semistable object $E_K \in \Ap_K$ of nonzero rank, there exists a flat family $E$ of objects in $\Ap$ over $\Spec R$ such that $j^\ast E \cong E_K$.
\end{coro}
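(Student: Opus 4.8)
The plan is to reduce the statement to Proposition \ref{pro-completenessofheart-1-v2} by first locating $E_K$ in a smaller heart. First I would invoke Lemma \ref{lemma-PTsscharacterise}: since $E_K \in \Ap_K$ is PT-semistable of nonzero rank, the sheaf $H^0(E_K)$ is $0$-dimensional and $H^{-1}(E_K)$ is torsion-free. A $0$-dimensional sheaf lies in $\Coh_{\leq 0}(X_K)$, while a torsion-free sheaf on the three-fold $X_K$ has no nonzero subsheaf of dimension $\leq 2$ (any nonzero subsheaf of a torsion-free sheaf is again torsion-free, hence $3$-dimensional), so $H^{-1}(E_K) \in \Coh_{\geq 3}(X_K)$. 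Using the short exact sequence $0 \to H^{-1}(E_K)[1] \to E_K \to H^0(E_K) \to 0$ in $\Ap_K$, I then conclude
\[
E_K \in \langle \Coh_{\leq 0}(X_K), \Coh_{\geq 3}(X_K)[1]\rangle,
\]
which is precisely the $d=0$ instance of the category appearing in Proposition \ref{pro-completenessofheart-1-v2}.

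Next I would apply Proposition \ref{pro-completenessofheart-1-v2} with $d=0$ to obtain an object $\widetilde{E} \in D^b(X_R)$ whose generic fibre satisfies $j^\ast \widetilde{E} \cong E_K$ and whose central fibre satisfies $L\iota^\ast \widetilde{E} \in \langle \Coh_{\leq 0}(X_k), \Coh_{\geq 3}(X_k)[1]\rangle$. It then remains to check that $\widetilde{E}$ qualifies as a flat family of objects in $\Ap$ over $\Spec R$. The key observation is the inclusion of hearts: since $\Coh_{\leq 0}(X_k) \subseteq \Coh_{\leq 1}(X_k)$ and $\Coh_{\geq 3}(X_k) \subseteq \Coh_{\geq 2}(X_k)$, both generating subcategories $\Coh_{\leq 0}(X_k)$ and $\Coh_{\geq 3}(X_k)[1]$ already lie inside $\Ap(X_k)$; because $\Ap(X_k)$ is the heart of a t-structure and hence extension-closed, the entire extension-closed subcategory $\langle \Coh_{\leq 0}(X_k), \Coh_{\geq 3}(X_k)[1]\rangle$ is contained in $\Ap(X_k)$. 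Thus $L\iota^\ast \widetilde{E} \in \Ap(X_k)$.

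Finally I would note that $\Spec R$, for a DVR $R$, has a unique closed point, namely the one with residue field $k$, whose fibre is $X_k$ and whose closed immersion is $\iota = \iota_1$. Consequently the flatness requirement ``$L\iota_s^\ast \widetilde{E} \in \Ap(X|_s)$ for all closed points $s$'' collapses to the single condition $L\iota^\ast \widetilde{E} \in \Ap(X_k)$, which the previous step establishes. Therefore $\widetilde{E}$ is a flat family of objects in $\Ap$ over $\Spec R$ with $j^\ast \widetilde{E} \cong E_K$, as required.

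As for difficulty, I do not expect a genuine obstacle here: this corollary is essentially a matter of bookkeeping once Lemma \ref{lemma-PTsscharacterise} and Proposition \ref{pro-completenessofheart-1-v2} are available. The only step that warrants a moment's care is confirming that membership in $\langle \Coh_{\leq 0}(X_k), \Coh_{\geq 3}(X_k)[1]\rangle$ at the central fibre upgrades to membership in the full heart $\Ap(X_k)$, i.e. the extension-closedness argument above; everything else is a direct translation of the hypotheses and of the definition of flat family over $\Spec R$.
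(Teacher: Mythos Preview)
Your proposal is correct and follows precisely the route the paper intends: the paper opens Section 4.1 by remarking that every PT-semistable object of nonzero rank in $\Ap_K$ lies in $\langle \Coh_{\leq 0}(X_K), \Coh_{\geq 3}(X_K)[1]\rangle$ by Lemma \ref{lemma-PTsscharacterise}, then proves Proposition \ref{pro-completenessofheart-1-v2}, and states the corollary without further proof. Your write-up simply spells out the implicit steps, including the extension-closedness check that the smaller heart sits inside $\Ap(X_k)$ and the observation that $\Spec R$ has a unique closed point.
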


\subsection{Completeness of the Heart $\Ap$}

As mentioned at the start of this section, we can prove more: that any object in $\Ap_K$, not just PT-semistable objects, can be extended to a flat family over $\Spec R$.

\begin{pro}\label{pro-completenessofheart-1}
Let $E_K \in \Ap_K$ be any PT-semistable object.  Then there exists an object $I \in D^b(X_R)$ such that $j^\ast I \cong E_K$ and $L\iota^\ast I \in\Ap$.
\end{pro}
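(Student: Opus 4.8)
The plan is to split on the rank of $E_K$ and reduce each case to results already in hand. If $E_K$ has nonzero rank, there is nothing new to do: by Lemma \ref{lemma-PTsscharacterise} such an object lies in $\langle \Coh_{\leq 0}(X_K), \Coh_{\geq 3}(X_K)[1]\rangle$, so this is precisely the content of Corollary \ref{XKssXkinheart} (equivalently, apply Proposition \ref{pro-completenessofheart-1-v2} with $d=0$). It therefore remains to treat the case $\rk E_K = 0$.

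When $\rk E_K = 0$, the sheaf $H^{-1}(E_K)$ is forced to be torsion (since $H^0(E_K) \in \Coh_{\leq 1}(X_K)$ always has rank zero, and $\rk E_K = -\rk H^{-1}(E_K)$), so $E_K$ has dimension at most $2$ and Lemma \ref{lemma-rankzeroPTssobj} applies: up to a shift, $E_K$ is a single Gieseker semistable sheaf. The strategy is to extend that sheaf across $\Spec R$ using the classical valuative criterion for completeness of the moduli of Gieseker semistable sheaves (\cite[Theorem 2.B.1]{HL}, via Langton's construction \cite{Langton}), and then put the shift back.

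Explicitly, if $E_K$ has dimension $0$ or $1$, then $E_K = H^0(E_K) =: G_K$ with $G_K \in \Coh_{\leq 1}(X_K)$ Gieseker semistable; extending $G_K$ to an $R$-flat family $\wt{G}$ on $X_R$ with Gieseker semistable central fibre and setting $I := \wt{G}$ works, because flatness preserves the (degree $\leq 1$) Hilbert polynomial and hence $L\iota^\ast I = \iota^\ast \wt{G} \in \Coh_{\leq 1}(X_k) \subset \Ap$. If instead $E_K$ has dimension $2$, then $E_K = H^{-1}(E_K)[1] =: G_K[1]$ with $G_K$ pure $2$-dimensional and Gieseker semistable; extending $G_K$ to an $R$-flat $\wt{G}$ with $j^\ast \wt{G} \cong G_K$ and Gieseker semistable central fibre, I set $I := \wt{G}[1]$, so that $j^\ast I = G_K[1] = E_K$ and, by $R$-flatness, $L\iota^\ast I = (\iota^\ast \wt{G})[1]$.

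The one point that needs care is this last subcase: for $I = \wt{G}[1]$ to land in $\Ap$ on the central fibre I need $\iota^\ast \wt{G} \in \Coh_{\geq 2}(X_k)$, i.e.\ that the limit sheaf has no subsheaf of dimension $\leq 1$. This is guaranteed because a Gieseker semistable sheaf is by definition pure, and flatness keeps the Hilbert polynomial of degree $2$, so $\iota^\ast \wt{G}$ is pure of dimension $2$ and hence lies in $\Coh_{\geq 2}(X_k)$. Thus no new argument beyond the cited classical completeness and the purity of semistable limits is required.
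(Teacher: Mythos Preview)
Your proof is correct and follows essentially the same approach as the paper's: split on rank, invoke Corollary \ref{XKssXkinheart} for nonzero rank, and for rank zero use Lemma \ref{lemma-rankzeroPTssobj} to reduce to a (shifted) Gieseker semistable sheaf and then apply the classical valuative criterion. You have simply filled in more detail than the paper does --- in particular, your explicit check that in the dimension-$2$ case the Langton limit is pure (hence lies in $\Coh_{\geq 2}(X_k)$, so that the shift lands in $\Ap$) makes precise a point the paper's one-line proof leaves implicit.
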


\begin{proof}
Suppose $\rank (E_K)= 0$.  Then $E_K$ is just a sheaf by Lemma \ref{lemma-rankzeroPTssobj}, and the result follows from the usual valuative criterion for completeness for semistable sheaves.  Otherwise, the result follows from Corollary \ref{XKssXkinheart}.
\end{proof}

We are now ready to prove the completeness of the heart $\Ap = \langle \Coh_{\leq 1}, \Coh_{\geq 2}[1]\rangle$, which is more general than Theorem \ref{theorem-completenessofheart-1}.

\begin{theorem}[Completeness of the heart]\label{theorem-completenessofheart-3}
Let $X$ be a smooth projective three-fold over $k$.  Suppose $E_K \in \Ap_K$.  Then there exists some $I \in D^b(X_R)$ such that $j^\ast I \cong E_K$ in $D^b(X_K)$ and $L\iota^\ast I \in \Ap (X_k)$.
\end{theorem}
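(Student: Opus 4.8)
The plan is to bootstrap from the semistable case, already settled in Proposition \ref{pro-completenessofheart-1}, to an arbitrary object of $\Ap_K$ by inducting along its Harder--Narasimhan filtration and gluing the pieces with the extension lemma. Since HN filtrations for PT-stability exist and are unique \cite[Section 7]{BayerPBSC}, any nonzero $E_K \in \Ap_K$ carries a filtration
\[
0 = E^{(0)} \hookrightarrow E^{(1)} \hookrightarrow \cdots \hookrightarrow E^{(\ell)} = E_K
\]
in $\Ap_K$ whose successive quotients $G_i = E^{(i)}/E^{(i-1)}$ are PT-semistable. I would prove, by induction on the length $\ell$, the statement $P_\ell$: every $E_K \in \Ap_K$ with PT-HN length $\leq \ell$ admits an $\tilde E \in D^b(X_R)$ with $j^\ast \tilde E \cong E_K$ and $L\iota^\ast \tilde E \in \Ap (X_k)$. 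The base case $\ell \leq 1$ is precisely Proposition \ref{pro-completenessofheart-1}.

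For the inductive step, I would view an $E_K$ of HN length $\ell+1$ through the exact triangle
\[
E^{(\ell)} \to E_K \to G_{\ell+1} \overset{\al}{\to} E^{(\ell)}[1]
\]
in $D^b(X_K)$, where $E^{(\ell)}$ has HN length $\ell$ and $G_{\ell+1}$ is PT-semistable. By the inductive hypothesis $P_\ell$ there is an extension $\widetilde{E^{(\ell)}}$ of $E^{(\ell)}$ with central fibre in $\Ap (X_k)$, and by Proposition \ref{pro-completenessofheart-1} there is an extension $\widetilde{G_{\ell+1}}$ of $G_{\ell+1}$ with central fibre in $\Ap (X_k)$. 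I would then run the construction in the proof of Lemma \ref{extension-completeness}: using that $\Hom_{D^b(X_R)}(\widetilde{G_{\ell+1}}, \widetilde{E^{(\ell)}}[1])$ is a finitely generated $R$-module satisfying \eqref{HomX_RX_K}, some $\pi^m \al$ extends to a morphism $\tilde\al : \widetilde{G_{\ell+1}} \to \widetilde{E^{(\ell)}}[1]$ over $\Spec R$, and I would set $\tilde E := \cone (\tilde\al)[-1]$.

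It then remains to check the two fibre conditions. Over the generic point $\pi$ is a unit, so $j^\ast \tilde\al = \pi^m \al$ differs from $\al$ by a unit and $j^\ast \tilde E \cong \cone (\al)[-1] \cong E_K$. Over the central fibre, $L\iota^\ast \tilde E$ sits in the triangle $L\iota^\ast \widetilde{E^{(\ell)}} \to L\iota^\ast \tilde E \to L\iota^\ast \widetilde{G_{\ell+1}} \to L\iota^\ast\widetilde{E^{(\ell)}}[1]$, exhibiting it as an extension of two objects of $\Ap (X_k)$; since $\Ap (X_k)$ is the heart of a t-structure, it is closed under extensions, so $L\iota^\ast \tilde E \in \Ap (X_k)$. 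The degenerate case $L\iota^\ast \tilde\al = 0$, in which the triangle splits, is harmless, since a direct sum of objects of $\Ap (X_k)$ still lies in $\Ap (X_k)$. This closes the induction and proves the theorem.

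The content here is essentially bookkeeping on top of the earlier results, so I do not expect a serious obstacle; the one point that needs care is the final one, namely that reducing the extended morphism $\tilde\al$ modulo $\pi$ keeps the central fibre inside the heart. This is guaranteed by the extension-closedness of $\Ap$, which is exactly why casting the reduction as iterated extensions (rather than, say, truncating a chain-level model as in Theorem \ref{theorem-completenessofheart-1}) is the natural route. I note that one could alternatively bypass the HN filtration entirely: the heart $\Ap$ is the tilt of the torsion pair $(\Coh_{\leq 1}(X), \Coh_{\geq 2}(X))$, so Corollary \ref{extension-completeness-coro1} reduces the theorem to the valuative criterion for completeness for the stacks $\Coh_{\leq 1}(X)$ and $\Coh_{\geq 2}(X)$. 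The latter, however, would require a purity-preserving Langton argument to keep the central fibre free of $1$-dimensional subsheaves, which is why I prefer the HN-based route that simply reuses Proposition \ref{pro-completenessofheart-1}.
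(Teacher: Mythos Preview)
Your proof is correct and follows essentially the same route as the paper: take the PT Harder--Narasimhan filtration of $E_K$, extend the semistable factors via Proposition~\ref{pro-completenessofheart-1}, and glue them using Lemma~\ref{extension-completeness}. Your write-up is more explicit about the induction and the verification of the fibre conditions, but the underlying strategy is identical.
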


\begin{proof}
All we have to do is to  consider the HN filtration of $E_K$, extend the semistable quotients one by one, and piece them back together.  More explicitly, suppose $E_K$ has the following HN filtration with respect to PT-stability on $X_K$ (HN filtrations for polynomial stability exist by \cite[Theorem 3.2.2]{BayerPBSC}): $0=G^0_K \subset G^1_K \subset \cdots \subset G^n_K = E_K$.

Since each $G^i_K$ is PT-semistable, the result follows from Proposition \ref{pro-completenessofheart-1} and Lemma \ref{extension-completeness}.
\end{proof}

\section{Computing Cohomology with Respect to the Heart $\Ap$}

In performing semistable reduction for a flat family of complexes in the derived category (as we will be doing in the sequel to this paper),  we will often pull back or push forward the complexes at hand, and then compute their cohomology with respect to the t-structure given by $\Ap_m$ for some $m$.  Here, we establish  technical tools tailored for these situations.

\subsection{$t$-Structures on the Unbounded Derived Category}

As an example, let $G \in \Ap_m$, where $m \geq 1$.  Then $\iotam_\ast G \in D(X_R)$.  \textit{A priori}, we do not know what $L\iota_m^\ast \iotam_\ast G$ looks like: we know that the derived pullback $L\iota_m^\ast(-)$ is the same as the derived tensor $-\Lo_{\OO_{X_R}} \OO_{X_m}$, but we do not know what a locally free resolution of $\iotam_\ast G$ on $X_R$ looks like in general.  To get around this problem, we note that $\iotam_\ast$ is an exact functor, and so preserves cohomology.  That is, the pushforwards of the cohomology sheaves of $L\iota_m^\ast \iotam_\ast G$ will be the cohomology sheaves of $\iotam_\ast L\iota_m^\ast \iotam_\ast G$, which is in $D(X_R)$.  Then we can work out what $\iotam_\ast L\iota_m^\ast (\iotam_\ast G) \cong \iotam_\ast G \Lo_{\OO_{X_R}} \OO_{X_m}$ looks like by replacing $\OO_{X_m}$ with the resolution $[\OO_{X_R} \overset{\pi^m}{\to} \OO_{X_R}]$.  Then
\begin{align*}
\iotam_\ast L\iota_m^\ast (\iotam_\ast G) &\cong \iotam_\ast G \Lo_{\OO_{X_R}} \OO_{X_m} \\
&\cong \iotam_\ast G \Lo_{\OO_{X_R}} [\OO_{X_R} \overset{\pi^m}{\to} \OO_{X_R}] \\
&\cong \iotam_\ast G [1] \oplus \iotam_\ast G \text{\quad in $D(X_R)$}.
\end{align*}

As another example, let $1 \leq m' < m$, and let $G \in \Ap_{m'}$.  Then we can similarly try to understand the cohomology of $L\iota_{m,m'}^\ast {\iota_{m,m'}}_\ast G$ by understanding the cohomology of ${\iota_{m,m'}}_\ast L\iota_{m,m'}^\ast {\iota_{m,m'}}_\ast G$.  Note that $\OO_{X_{m'}}$ has an infinite locally free resolution on $\OO_{X_m}$,
\[
\OO_{X_{m'}} \cong [ \cdots \to \OO_{X_m} \overset{\pi^{m'}}{\longrightarrow} \OO_{X_m} \overset{\pi^{m-m'}}{\longrightarrow} \OO_{X_m} \overset{\pi^{m'}}{\longrightarrow} \OO_{X_m}] \text{\quad in $D(X_m)$}.
\]
If $m'=1$, for instance, then multiplication by $\pi^{m-m'}=\pi^{m-1}$ or $\pi$ would induce the zero map from $G$ to itself, in which case ${\iota_{m,1}}_\ast L\iota_{m,1}^\ast {\iota_{m,1}}_\ast G \cong \oplus_{i \geq 0} {\iota_{m,1}}_\ast G[i]$.

From the last example, we see that even when we start with an object in the bounded derived category, we can still easily end up with an object with unbounded cohomology.  Therefore, it would  be useful to know that the definition of our t-structures (corresponding to the hearts $\Ap_m$) has nothing to do with whether the ambient derived category is bounded or unbounded.  In other words, we want to make sure that we can talk about cohomology of unbounded complexes with respect to the t-structures $\Ap_m$.  This is the content of the next proposition.

Let $(\mathcal{T},\mathcal{F})$ be a torsion pair in an abelian category $\mathcal{A}$.  From \cite[Proposition 2.1]{TACQA}, we know that the pair
\begin{align*}
  D^{\leq 0,b} &= \{ E \in D^b(\mathcal{A}) : H^0(E) \in \mathcal{T}, H^i(E)=0 \, \forall i>0\} \\
  D^{\geq 0,b} &= \{E \in D^b(\mathcal{A}) : H^{-1}(E) \in \mathcal{F}, H^i(E)=0\, \forall i<-1\}
\end{align*}
define a t-structure on the bounded derived category $D^b(\mathcal{A})$.  We claim that

\begin{pro}\label{tstructureDX}
The two subcategories
\begin{align*}
  D^{\leq 0} &= \{ E \in D(\mathcal{A}) : H^0(E) \in \mathcal{T}, H^i(E)=0 \, \forall i>0\} \\
  D^{\geq 0} &= \{E \in D(\mathcal{A}) : H^{-1}(E) \in \mathcal{F}, H^i(E)=0\, \forall i<-1\}
\end{align*}
of $D(\mathcal{A})$ define a t-structure on the unbounded derived category $D(\mathcal{A})$.
\end{pro}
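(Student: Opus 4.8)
The plan is to realize the pair $(D^{\leq 0}, D^{\geq 0})$ as the tilt, at the torsion pair $(\mathcal{T},\mathcal{F})$, of the \emph{standard} t-structure on the unbounded derived category $D(\mathcal{A})$, and to verify the three t-structure axioms directly. The observation underlying every step is that the defining conditions constrain cohomology only near degree $0$: each is a condition on a single cohomology sheaf (namely $H^0$, or $H^{-1}$) together with vanishing of cohomology in all the remaining degrees. Consequently the bounded argument of Happel--Reiten--Smal\o{} \cite[Proposition 2.1]{TACQA} should transfer essentially verbatim, provided the standard truncation functors $\tau^{\leq n}, \tau^{\geq n}$, with their truncation triangles and the long exact sequence of standard cohomology, are available on $D(\mathcal{A})$. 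These are defined termwise on complexes and descend to the derived category, so I would begin by recording that the standard t-structure, together with its truncation functors, exists on the unbounded $D(\mathcal{A})$.

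Axiom (1), that $D^{\leq 0}\subset D^{\leq 1}$ and $D^{\geq 0}\subset D^{\geq 1}$, is immediate, since the cohomological conditions are monotone and $H^1(E)=0\in\mathcal{T}$ whenever $E\in D^{\leq 0}$ (and symmetrically on the other side). For axiom (2), the $\Hom$-vanishing, I would argue by d\'evissage along standard truncation triangles, as the long exact sequences of $\Hom$-groups are formal and valid for unbounded objects. Given $X\in D^{\leq 0}$ and $Y\in D^{\geq 1}$, applying $\Hom(-,Y)$ to $\tau^{\leq-1}X\to X\to H^0(X)[0]$ reduces the claim to $\Hom(\tau^{\leq-1}X,Y)=0$ and $\Hom(H^0(X)[0],Y)=0$. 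The first holds because $\tau^{\leq-1}X$ sits in standard degrees $\leq-1$ while $Y$ sits in standard degrees $\geq 0$, so they are $\Hom$-orthogonal for the standard t-structure. For the second, applying $\Hom(H^0(X)[0],-)$ to $H^0(Y)[0]\to Y\to\tau^{\geq1}Y$ reduces it to $\Hom_{\mathcal{A}}(H^0(X),H^0(Y))=0$, which is the torsion-pair orthogonality since $H^0(X)\in\mathcal{T}$ and $H^0(Y)\in\mathcal{F}$, together with a further standard $\Hom$-vanishing for the $\tau^{\geq1}Y$ term.

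For axiom (3), the decomposition triangle, I would take $E\in D(\mathcal{A})$, form the torsion decomposition $0\to T\to H^0(E)\to F\to 0$ of its degree-zero cohomology, and define $A$ as the fibre of the composite $\tau^{\leq0}E\to H^0(E)[0]\to F[0]$; a cohomology long exact sequence then gives $H^0(A)=T\in\mathcal{T}$, $H^i(A)=H^i(E)$ for $i<0$, and $H^i(A)=0$ for $i>0$, so $A\in D^{\leq 0}$. Setting $B=\cone(A\to E)$ and applying the octahedral axiom to $A\to\tau^{\leq0}E\to E$ produces a triangle $F[0]\to B\to\tau^{\geq1}E\to F[1]$, from which $H^0(B)=F\in\mathcal{F}$, $H^i(B)=H^i(E)$ for $i\geq1$, and $H^i(B)=0$ for $i<0$, giving $B\in D^{\geq 1}$. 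I expect the main obstacle to be one of bookkeeping rather than of substance: one must check that these cohomology long exact sequences and the octahedron are legitimate for unbounded $E$ (which they are, being formal consequences of the triangulated structure once the standard truncations exist) and that the connecting maps correctly identify $H^0(A)$ with $T$ and $H^0(B)$ with $F$. The conceptual point to nail down --- the only genuinely new ingredient beyond \cite[Proposition 2.1]{TACQA} --- is that the entire tilting construction modifies $E$ only in cohomological degree $0$, so no finiteness of amplitude is ever used and unboundedness causes no difficulty.
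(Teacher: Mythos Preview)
Your proof is correct, but takes a genuinely different route from the paper's. The paper proves only axiom~(3) and does so by a \emph{reduction} to the bounded result: given $E\in D^-(\mathcal{A})$, it first applies a standard truncation $\tau_{\mathcal{A}}^{\geq k}$ at some $k\ll 0$ to land in $D^b(\mathcal{A})$, invokes \cite[Proposition~2.1]{TACQA} there as a black box to obtain the tilted truncation of $Y=\tau_{\mathcal{A}}^{\geq k}E$, and then uses the octahedral axiom on the composite $E\to Y\to \tau^{\geq 0}Y$ to reattach the unbounded tail $\tau_{\mathcal{A}}^{\leq k-1}E$. The resulting $Z=\cone(E\to\tau^{\geq 0}Y)[-1]$ agrees with $\tau^{\leq -1}Y$ near degree~$0$, hence lies in $D^{\leq -1}$.

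You instead \emph{rerun} the Happel--Reiten--Smal\o{} construction directly in $D(\mathcal{A})$: form the torsion decomposition of $H^0(E)$, define $A$ as the fibre of $\tau^{\leq 0}E\to F[0]$, and read off $B$ via the octahedron on $A\to\tau^{\leq 0}E\to E$. This is slightly more self-contained and makes transparent the key conceptual point you identify---that the tilt modifies $E$ only in cohomological degree~$0$, so no finiteness of amplitude is ever invoked. The paper's approach, by contrast, is more modular (it treats HRS as given and isolates the new content as a single gluing step), at the cost of an extra truncation-and-octahedron manoeuvre and of handling $D^-(\mathcal{A})$ first before claiming the extension to $D(\mathcal{A})$ is straightforward. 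Both arguments use the standard t-structure and the octahedral axiom on unbounded complexes as their only inputs; yours just deploys them one level lower.
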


\begin{proof}
 Let us just prove axiom (3) in the definition of a t-structure (see section \ref{section-tstructures} for the axioms).  The proofs of the other axioms are easier to show.  Furthermore, let us just prove the third axiom for the bounded above derived category $D^-(\mathcal{A})$; it is straightforward to extend the proof to the case of the unbounded derived category $D(\mathcal{A})$.  The philosophy of the proof is to truncate the complexes first, then apply the results for bounded complexes, and finally put the complexes back together using the octahedral axiom.  In this proof, we will use $\tau^{\leq i}, \tau^{\geq i}$ to denote truncation functors with respect to the t-structure $(D^{\leq 0,b}, D^{\geq 0,b})$, and use $\tau^{\leq i}_{\mathcal{A}}, \tau^{\geq i}_{\mathcal{A}}$ to denote the truncations with respect to the standard t-structure on $D^-(\mathcal{A})$.

  Axiom (3) of a t-structure says that, given any object $E \in D^-(\mathcal{A})$, there should exist an exact triangle $A \to E \to B \to A[1]$ where $A \in D^{\leq 0}, B \in D^{\geq 1}$.  Pick any $E \in D^-(\mathcal{A})$.  Pick an integer $k \ll 0$.  Then we have the exact triangle $\tau_\mathcal{A}^{\leq k-1} E \to E \stackrel{\al}{\to}  \tau_{\mathcal{A}}^{\geq k} E  \to \tau_\mathcal{A}^{\leq k-1} E [1]$.  Let $Y := \tau_{\mathcal{A}}^{\geq k} E$, which is an object in $D^b(\mathcal{A})$.  By what we know about the bounded case, we have an exact triangle $\tau^{\leq -1} Y \to Y \stackrel{\beta}{\to}  \tau^{\geq 0} Y  \to \tau^{\leq -1} Y [1]$.

Now we can stack the first triangle on top of the second one as follows:
\begin{equation*}
\xymatrix{
\ar@{.}[rr] & & \tau_{\mathcal{A}}^{\leq k-1} E \ar[rr] \ar@{.}[dl] & & E \ar[dl]_\al \ar@/^2pc/@{.>}[ddll]^{\beta\al} \\
 & \tau^{\leq -1}Y \ar[rr] \ar@{.}[ul]& & \tau_{\mathcal{A}}^{\geq k} E= Y  \ar[ul]^{[1]} \ar[dl]_\beta & \\
& & \tau^{\geq 0} Y \ar[ul]^{[1]} & &
}.
\end{equation*}
In this diagram, the triangle completing the morphism $E \overset{\beta\al}{\longrightarrow} \tau^{\geq 0}Y$ is exactly the triangle we are looking for.  We can check this using the octahedral axiom: start with the commutative triangle
\begin{equation*}
\xymatrix{
 & Y \ar[dr]^\beta & \\
  E \ar[rr]^{\beta \al} \ar[ur]^\al & & \tau^{\geq 0}Y
}.
\end{equation*}
The octahedral axiom then gives the following diagram
\begin{equation*}
\def\objectstyle{\scriptstyle}
\def\labelstyle{\scriptstyle}
\xymatrix@R=2pc{
 & & & \tau_{\mathcal{A}}^{\leq k-1}E[1] \ar[ddd] \\
 & & & \\
& Y \ar[rruu] \ar[dr]^\beta & & \\
E \ar[ur]^\al \ar[rr]^{\beta \al} & & \tau^{\geq 0}Y \ar[r] \ar[dr] & Z[1] \ar[d] \\
& & & \tau^{\leq -1}Y[1]
}
\end{equation*}
where each straight line is an exact triangle, and where $Z$ is defined as $\cone (\beta \al)[1]$.

In the  exact triangle formed by the vertical line, we note that the standard cohomology of $\tau_{\mathcal{A}}^{\leq k-1}E[1]$ are all zero near degree 0 (since $k \ll 0$), and so the standard cohomology of $Z$ and $\tau^{\leq -1}Y$ all agree near degree 0.  So $Z \in D^{\leq -1}$.  Thus $Z \to E \to \tau^{\geq 0}Y \to Z[1]$ is the desired exact triangle.
\end{proof}

\subsection{Properties of Pullbacks and Pushforwards}\label{section-ppp}

\begin{lemma}\label{cohomology-lemma0}
For integers $1\leq m' < m$, the derived pullback $L\iota_{m,m'}^\ast : D^-(X_m) \to D^-(X_{m'})$ between the bounded above derived categories is right t-exact with respect to the t-structures corresponding to $\Ap_m$ and $\Ap_{m'}$, as is the functor $L\iota_{m'}^\ast \iotam_\ast : D^-(X_m) \to D^-(X_{m'})$.
\end{lemma}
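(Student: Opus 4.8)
The plan is to prove right t-exactness of $L\iota_{m,m'}^\ast$ directly, by checking that it sends $D^{\leq 0}_{\Ap_m}$ into $D^{\leq 0}_{\Ap_{m'}}$, and then to deduce the statement for $L\iota_{m'}^\ast \iotam_\ast$ by factoring it through $L\iota_{m,m'}^\ast$. For the first functor, recall from the Notation section the explicit description $D^{\leq 0}_{\Ap_m} = \{E : H^i(E)=0 \text{ for } i>1,\ H^0(E) \in \Coh_{\leq 1}(X_m)\}$ (and the analogous description on $X_{m'}$). So I would take $E \in D^{\leq 0}_{\Ap_m}$, compute the standard cohomology sheaves $H^i(L\iota_{m,m'}^\ast E)$, and verify the two conditions: that these sheaves vanish for $i>1$, and that $H^1(L\iota_{m,m'}^\ast E) \in \Coh_{\leq 1}(X_{m'})$ (note the degree shift: since we are working with a tilt at $\Coh_{\leq 1}/\Coh_{\geq 2}$, the relevant top-degree condition concerns $H^1$, the degree-$1$ cohomology, via the description of $D^{\leq 0}_{\Ap}$).

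First I would establish the vanishing in high degrees. The derived pullback $L\iota_{m,m'}^\ast = (-)\Lo_{\OO_{X_m}} \OO_{X_{m'}}$ is computed using the (infinite, $2$-periodic) locally free resolution of $\OO_{X_{m'}}$ over $\OO_{X_m}$ exhibited in the excerpt, namely $[\cdots \to \OO_{X_m} \overset{\pi^{m'}}\to \OO_{X_m} \overset{\pi^{m-m'}}\to \OO_{X_m} \overset{\pi^{m'}}\to \OO_{X_m}]$. Because this is a resolution by a bounded-above complex that is termwise flat, the functor $L\iota_{m,m'}^\ast$ is right t-exact for the \emph{standard} t-structures: it cannot raise the top standard-cohomological degree, so if $H^i(E)=0$ for $i>1$ then $H^i(L\iota_{m,m'}^\ast E)=0$ for $i>1$ as well. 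This is the hyperext / spectral sequence argument $H^p(E) \Lo \OO_{X_{m'}}$, where each $\mathrm{Tor}$-term sits in nonpositive degree relative to its input, so no cohomology is created above degree $1$. The cleaner way to phrase it: truncate $E$ so that $E \in D^{\leq 1}_{\mathrm{std}}(X_m)$, and use that left-derived functors of a right-exact functor preserve the bounded-above condition with the same top degree.

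The genuinely substantive point is showing $H^1(L\iota_{m,m'}^\ast E) \in \Coh_{\leq 1}(X_{m'})$. Here I would reduce to the top cohomology sheaf: a standard truncation argument shows $H^1(L\iota_{m,m'}^\ast E) = H^1(L\iota_{m,m'}^\ast H^1(E))$ (the top $\mathrm{Tor}^0 = \iota_{m,m'}^\ast$ applied to the top standard cohomology $H^1(E)$), and since $H^1(E) = \iota^\ast_{\text{(pushed)}} \in \Coh_{\leq 1}(X_m)$ by hypothesis, it remains to see that the ordinary pullback $\iota_{m,m'}^\ast$ carries $\Coh_{\leq 1}(X_m)$ into $\Coh_{\leq 1}(X_{m'})$. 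This is clear on dimension grounds: pulling back along the closed immersion $X_{m'} \hookrightarrow X_m$ (which is supported on the same underlying topological space $X_k$) does not increase the dimension of the support, because $\Supp(\iota_{m,m'}^\ast G) \subseteq \Supp(G)$. I expect \textbf{this support/dimension control on the top cohomology sheaf} to be the main obstacle requiring care, precisely because $L\iota_{m,m'}^\ast$ is only right t-exact and the lower $\mathrm{Tor}$-sheaves need not lie in $\Coh_{\leq 1}$ — but they are irrelevant, since the $D^{\leq 0}_{\Ap_{m'}}$ condition only constrains the top sheaf.

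Finally, for the functor $L\iota_{m'}^\ast \iotam_\ast$, I would factor it as $L\iota_{m'}^\ast \iotam_\ast \cong L\iota_{m,m'}^\ast \circ L\iota_m^\ast \iotam_\ast$, or more directly observe that $L\iota_{m'}^\ast\iotam_\ast G \cong L\iota_{m,m'}^\ast(L\iota_m^\ast\iotam_\ast G)$ whenever the relevant composition of pullbacks holds ($\iota_{m'} = \iota_m \circ \iota_{m,m'}$ as closed immersions into $X_R$). Using the computation from the excerpt, $L\iota_m^\ast\iotam_\ast G \cong G[1]\oplus G$ for $G \in D(X_m)$, which is right t-exact as an endofunctor (shifting by $[1]$ sends $\Ap_m$-degree $0$ to degree $-1$, which only helps the $\leq 0$ condition). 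Composing two right t-exact functors yields a right t-exact functor, so the claim for $L\iota_{m'}^\ast\iotam_\ast$ follows immediately from the first part. I would make sure the degree bookkeeping in the $G[1]\oplus G$ splitting is consistent with the $\Ap$-conventions before invoking it.
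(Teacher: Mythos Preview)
Your overall strategy is the same as the paper's: describe $D^{\leq 0}_{\Ap_m}$ via standard cohomology, observe that $L\iota_{m,m'}^\ast$ is right $t$-exact for the \emph{standard} $t$-structures (so the vanishing condition is preserved), control the dimension of the top cohomology sheaf under pullback, and then deduce the second assertion from the factorisation $L\iota_{m'}^\ast\iotam_\ast \cong L\iota_{m,m'}^\ast \circ L\iota_m^\ast\iotam_\ast$ together with the evident right $t$-exactness of $L\iota_m^\ast\iotam_\ast$. The paper packages the dimension-control step as a separate lemma (its Lemma~\ref{flatness-lemma2}), whose content is that the annihilator of $H^0(E)$ is contained in the annihilator of $H^0$ of the pullback --- exactly your support-inclusion argument.

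However, there is an index error running through your write-up. The correct description is
\[
D^{\leq 0}_{\Ap_m} = \{E : H^i(E)=0\text{ for all }i>0,\ H^0(E)\in\Coh_{\leq 1}(X_m)\},
\]
with $i>0$, not $i>1$; this is what the paper actually uses in its proof (and what appears in the discussion of tilting in Section~\ref{section-tstructures} --- the ``$i>1$'' in the Notation section is a typo). The heart $\Ap_m$ sits in standard degrees $[-1,0]$, so $D^{\leq 0}_{\Ap_m}\subset D^{\leq 0}_{\mathrm{std}}$ and there is no ``degree shift'' pushing the constraint to $H^1$: the sheaf you must show lies in $\Coh_{\leq 1}(X_{m'})$ is $H^0(L\iota_{m,m'}^\ast E)$, not $H^1$. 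The identity you actually need is $H^0(L\iota_{m,m'}^\ast E)\cong \iota_{m,m'}^\ast H^0(E)$, the underived pullback of the top standard cohomology (this holds because $E\in D^{\leq 0}_{\mathrm{std}}$ and $\iota_{m,m'}^\ast$ is right exact). Your displayed formula $H^1(L\iota_{m,m'}^\ast E) = H^1(L\iota_{m,m'}^\ast H^1(E))$ would in fact vanish, since $L\iota_{m,m'}^\ast$ of a sheaf lives in nonpositive standard degrees. With the indices corrected throughout, your argument is sound and coincides with the paper's.
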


Recall that right t-exactness in this case means $L\iota_{m,m'}^\ast$ takes $D^{\leq 0}_{\Ap_m}$ into $D^{\leq 0}_{\Ap_{m'}}$.

\begin{proof}
By Proposition \ref{tstructureDX}, we know we can describe $D^{\leq 0}_{\Ap_m}(X_m)$ as
\[
  D^{\leq 0}_{\Ap_m}(X_m) = \{ E \in D(X_m) : H^i (E)=0 \text{ for $i >0$}, H^0(E) \in \Coh_{\leq 1}(X_m)\}.
\]
So by Lemmas \ref{flatness-lemma1} and \ref{flatness-lemma2}, proved below, we get that $E \in D^{\leq 0}_{\Ap_m}(X_m)$ implies $L\iota_{m,m'}^\ast (E) \in D^{\leq 0}_{\Ap_{m'}}(X_{m'})$.  Then the right t-exactness of $L\iota_{m,m'}^\ast : D^-(X_m) \to D^-(X_{m'})$ follows.

The second part of the assertion follows from $L\iota_{m'}^\ast \iotam_\ast \cong L\iota_{m,m'}^\ast L\iota_m^\ast \iotam_\ast$ and the obvious right t-exactness of $L\iota_m^\ast \iotam_\ast$.
\end{proof}

\begin{coro}\label{cohomology-lemma1}
The composite functor $\mathcal{H}^0_{\Ap_{m'}} \circ L\iota_{m,m'}^\ast : D^-(X_m) \to D^-(X_{m'})$ is right t-exact with respect to $\Ap_m$ and $\Ap_{m'}$.
\end{coro}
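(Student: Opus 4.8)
The plan is to deduce this from Lemma \ref{cohomology-lemma0} together with the standard principle that a right t-exact functor between triangulated categories induces a right exact functor on the hearts. Since $\mathcal{H}^0_{\Ap_{m'}}$ always takes values in the heart $\Ap_{m'} = D^{\leq 0}_{\Ap_{m'}} \cap D^{\geq 0}_{\Ap_{m'}}$, literal right t-exactness of the composite is automatic for every object; the substantive content of the corollary is therefore that the induced functor $\mathcal{H}^0_{\Ap_{m'}} \circ L\iota_{m,m'}^\ast : \Ap_m \to \Ap_{m'}$ is right exact in the sense of abelian categories. This is what I would establish.

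First I would take an arbitrary short exact sequence $0 \to A \to B \to C \to 0$ in the heart $\Ap_m$, which by the definition of exactness in a tilted heart (Section \ref{section-tstructures}) corresponds to an exact triangle $A \to B \to C \to A[1]$ in $D^-(X_m)$. Applying the triangulated functor $L\iota_{m,m'}^\ast$ yields an exact triangle
\[
L\iota_{m,m'}^\ast A \to L\iota_{m,m'}^\ast B \to L\iota_{m,m'}^\ast C \to L\iota_{m,m'}^\ast A[1]
\]
in $D^-(X_{m'})$. I would then pass to the associated long exact sequence of cohomology objects with respect to the t-structure corresponding to $\Ap_{m'}$:
\[
\cdots \to \mathcal{H}^0_{\Ap_{m'}}(L\iota_{m,m'}^\ast A) \to \mathcal{H}^0_{\Ap_{m'}}(L\iota_{m,m'}^\ast B) \to \mathcal{H}^0_{\Ap_{m'}}(L\iota_{m,m'}^\ast C) \to \mathcal{H}^1_{\Ap_{m'}}(L\iota_{m,m'}^\ast A) \to \cdots .
\]

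The key point is that Lemma \ref{cohomology-lemma0} gives $L\iota_{m,m'}^\ast A \in D^{\leq 0}_{\Ap_{m'}}$, so that $\mathcal{H}^i_{\Ap_{m'}}(L\iota_{m,m'}^\ast A) = 0$ for all $i > 0$; in particular the term $\mathcal{H}^1_{\Ap_{m'}}(L\iota_{m,m'}^\ast A)$ vanishes. Consequently the tail
\[
\mathcal{H}^0_{\Ap_{m'}}(L\iota_{m,m'}^\ast A) \to \mathcal{H}^0_{\Ap_{m'}}(L\iota_{m,m'}^\ast B) \to \mathcal{H}^0_{\Ap_{m'}}(L\iota_{m,m'}^\ast C) \to 0
\]
is exact in $\Ap_{m'}$, which is precisely the right exactness of the composite.

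The argument is purely formal once Lemma \ref{cohomology-lemma0} is available, so I do not anticipate a real obstacle. The only two things I would take care over are the translation of ``right t-exact'' for the composite into ``right exact on hearts'' (needed so the statement is not vacuous), and the correct indexing of the long exact sequence, so that it is exactly the vanishing of $\mathcal{H}^1_{\Ap_{m'}}$ of the \emph{first} term that produces surjectivity at the $\mathcal{H}^0$ level.
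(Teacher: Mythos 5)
Your proof is correct and matches the paper's intent: the paper states this corollary without proof as an immediate consequence of Lemma \ref{cohomology-lemma0}, and the argument you supply --- pass to the exact triangle, apply $L\iota_{m,m'}^\ast$, take the long exact sequence of $\HH^i_{\Ap_{m'}}$, and kill the $\HH^1_{\Ap_{m'}}$ term using $L\iota_{m,m'}^\ast A \in D^{\leq 0}_{\Ap_{m'}}$ --- is exactly the standard deduction being invoked. Your observation that the literal ``right t-exact'' statement is automatic and that the real content is right exactness on the hearts is also the correct reading.
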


An advantage of $\mathcal{H}^0_{\Ap_{m'}} \circ L\iota_{m,m'}^\ast$ as opposed to $L\iota_{m,m'}^\ast$ is that it takes objects in the heart $\Ap_m$ to the heart $\Ap_{m'}$.  We also have an analogue of $\iota_{m,m''}^\ast \cong \iota_{m',m''}^\ast \circ \iota_{m,m'}^\ast$ (for $1\leq m''<m'<m$):

\begin{coro}\label{cohomology-lemma2}
For $1 \leq m'' <m'<m$, we have the following isomorphism of functors from $D^{\leq 0}_{\Ap_m}(X_m)$ to $D^{\leq 0}_{\Ap_{m''}}(X_{m''})$:
\[
\mathcal{H}^0_{\Ap_{m''}} \circ L\iota_{m,m''}^\ast  \cong (\mathcal{H}^0_{\Ap_{m''}} \circ L\iota_{m',m''}^\ast) \circ (\mathcal{H}^0_{\Ap_{m'}} \circ L\iota_{m,m'}^\ast) .
\]
\end{coro}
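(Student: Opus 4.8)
The plan is to reduce the asserted identity of functors to a single formal fact about right $t$-exact functors, and then to check naturality. First I would use that derived pullbacks compose: since $\iota_{m,m''} = \iota_{m,m'}\circ \iota_{m',m''}$ as morphisms $X_{m''}\to X_m$, for any $E \in D^{\leq 0}_{\Ap_m}(X_m)$ there is a natural isomorphism $L\iota_{m,m''}^\ast E \cong L\iota_{m',m''}^\ast L\iota_{m,m'}^\ast E$. Set $F := L\iota_{m,m'}^\ast E$; by Lemma \ref{cohomology-lemma0} we have $F \in D^{\leq 0}_{\Ap_{m'}}(X_{m'})$, and moreover $\mathcal{H}^0_{\Ap_{m'}}(L\iota_{m,m'}^\ast E) = \mathcal{H}^0_{\Ap_{m'}}(F)$. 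Thus the left-hand side of the corollary is $\mathcal{H}^0_{\Ap_{m''}}(L\iota_{m',m''}^\ast F)$, and the whole statement becomes equivalent to the assertion that, for every $F \in D^{\leq 0}_{\Ap_{m'}}(X_{m'})$,
\[
\mathcal{H}^0_{\Ap_{m''}}(L\iota_{m',m''}^\ast F) \cong \mathcal{H}^0_{\Ap_{m''}}\bigl(L\iota_{m',m''}^\ast \mathcal{H}^0_{\Ap_{m'}}(F)\bigr)
\]
naturally in $F$.

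To prove this I would establish the general principle that, for a right $t$-exact triangulated functor $G$ and any $F$ in the $\leq 0$ part of the source $t$-structure, $\mathcal{H}^0(G(F)) \cong \mathcal{H}^0(G(\mathcal{H}^0(F)))$; here $G = L\iota_{m',m''}^\ast$, which is right $t$-exact by Lemma \ref{cohomology-lemma0}. Because $F \in D^{\leq 0}_{\Ap_{m'}}$ forces $\tau^{\geq 0}_{\Ap_{m'}}F = \mathcal{H}^0_{\Ap_{m'}}(F)$, there is a truncation triangle
\[
\tau^{\leq -1}_{\Ap_{m'}}F \to F \to \mathcal{H}^0_{\Ap_{m'}}(F) \to (\tau^{\leq -1}_{\Ap_{m'}}F)[1].
\]
Applying the triangulated functor $G$ and passing to the long exact cohomology sequence for the $\Ap_{m''}$-$t$-structure, the two terms $\mathcal{H}^0_{\Ap_{m''}}(G(\tau^{\leq -1}_{\Ap_{m'}}F))$ and $\mathcal{H}^1_{\Ap_{m''}}(G(\tau^{\leq -1}_{\Ap_{m'}}F))$ both vanish: indeed $\tau^{\leq -1}_{\Ap_{m'}}F \in D^{\leq -1}_{\Ap_{m'}}$, so right $t$-exactness gives $G(\tau^{\leq -1}_{\Ap_{m'}}F) \in D^{\leq -1}_{\Ap_{m''}}$, which has no cohomology in degrees $\geq 0$. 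Hence the map $\mathcal{H}^0_{\Ap_{m''}}(G(F)) \to \mathcal{H}^0_{\Ap_{m''}}(G(\mathcal{H}^0_{\Ap_{m'}}(F)))$ induced by the triangle is an isomorphism.

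Each ingredient is natural in the original object $E$: the comparison isomorphism of composed derived pullbacks, the formation of the canonical truncation triangle for the $\Ap_{m'}$-$t$-structure (available on the bounded-above category by Proposition \ref{tstructureDX}), and the connecting morphisms of the long exact sequence. I therefore expect the resulting objectwise isomorphisms to assemble into an isomorphism of functors. The main point requiring care, and the likely chief obstacle, is precisely this promotion from an objectwise isomorphism to a natural one: one must verify that the morphism obtained by applying $G$ to the functorial truncation triangle is compatible with morphisms of $F$, which comes down to the naturality of truncation and of the long exact sequence rather than to any new geometric input.
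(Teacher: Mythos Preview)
Your argument is correct and essentially the same as the paper's: both factor $L\iota_{m,m''}^\ast$ as $L\iota_{m',m''}^\ast L\iota_{m,m'}^\ast$, then use right $t$-exactness (Lemma~\ref{cohomology-lemma0}) and the truncation triangle to show that inserting $\mathcal{H}^0_{\Ap_{m'}}$ in the middle does not change the outer $\mathcal{H}^0_{\Ap_{m''}}$. The paper phrases the key step as the functor isomorphism $\tau^{\geq 0}_{\Ap_{m'}}L\iota_{m,m'}^\ast \cong \tau^{\geq 0}_{\Ap_{m'}}L\iota_{m,m'}^\ast\tau^{\geq 0}_{\Ap_m}$ on $D^{\leq 0}_{\Ap_m}$, obtained by applying $\tau^{\geq 0}$ to the morphism of truncation triangles (with naturality supplied by \cite[Proposition~10.1.4]{SM}), which is exactly your long-exact-sequence argument recast and also dispatches the naturality concern you flagged.
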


\begin{proof}
First we make an observation - take any morphism $E \overset{f}{\to} F$ in $D^{\leq 0}_{\Ap_m}(X_m)$.  Then we get a morphism between exact triangles (where the truncation functors are with respect to $\Ap_m$)

\begin{equation*}
\xymatrix{
\tau^{\leq -1} E \ar[r] \ar[d] & E \ar[d]^f \ar[r] & \tau^{\geq 0}E \ar[d]^{\tau^{\geq 0}(f)} \ar[r]  & \tau^{\leq -1} E[1] \ar[d] \\
\tau^{\leq -1} F \ar[r]        & F          \ar[r] & \tau^{\geq 0}F \ar[r] & \tau^{\leq -1}F[1]
}.
\end{equation*}
That the connecting morphism is functorial is part of \cite[Proposition 10.1.4]{SM}.

If we apply $L\iota_{m,m'}^\ast$ to the above diagram, noting that it is right t-exact, and then apply $\tau^{\geq 0}_{\Ap_{m'}}$, then the left-most and right-most columns of the diagram vanish, and the horizontal maps that remain are isomorphisms, and so we have an isomorphism of functors
%%note to self: the horizontal maps that remain are isomorphisms not because truncation functors are exact, but because those maps are isomorphisms in the long exact seq of cohomology
\[
  \tau^{\geq 0}_{\Ap_{m'}} L\iota_{m,m'}^\ast \cong \tau^{\geq 0}_{\Ap_{m'}} L\iota_{m,m'}^\ast \tau^{\geq 0}_{\Ap_m} : D^{\leq 0}_{\Ap_m}(X_m) \to D^{\leq 0}_{\Ap_{m'}}(X_{m'}).
\]

 Dropping the subscripts in the truncation functors, we have the following series of isomorphisms of functors $D^{\leq 0}_{\Ap_m}(X_m) \to D^{\leq 0}_{\Ap_{m''}}(X_{m''})$:
\begin{align*}
  \mathcal{H}^0 L\iota_{m,m''}^\ast &\cong \tau^{\leq 0}(\tau^{\geq 0} L\iota_{m,m''}^\ast ) \text{ by definition of $\mathcal{H}^0$}\\
&\cong \tau^{\leq 0} (\tau^{\geq 0} L\iota_{m',m''}^\ast ) L\iota_{m,m'}^\ast \\
&\cong \tau^{\leq 0} (\tau^{\geq 0} L\iota_{m',m''}^\ast \tau^{\geq 0})L\iota_{m,m'}^\ast \text{ by the observation above} \\
&= \tau^{\leq 0} \tau^{\geq 0} L\iota_{m',m''}^\ast (\tau^{\geq 0} L\iota_{m,m'}^\ast) \\
&\cong \tau^{\leq 0}\tau^{\geq 0} L\iota_{m',m''}^\ast (\tau^{\leq 0} \tau^{\geq 0} L\iota_{m,m'}^\ast) \text{ by Lemma \ref{cohomology-lemma0}} \\
&= \mathcal{H}^0 L\iota_{m',m''}^\ast \mathcal{H}^0 L\iota_{m,m'}^\ast \text{ as wanted.}
\end{align*}
\end{proof}

\begin{lemma}\label{cohomology-lemma3}
For any $E \in \Ap_m$, if $\HH^0 L\iota_{m,1}^\ast E =0$ then $E=0$.
\end{lemma}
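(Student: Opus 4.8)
The plan is to argue by induction on $m$, the main device being the distinguished triangle attached to the short exact sequence of $\OO_{X_m}$-modules
\[
0 \to \OO_{X_{m-1}} \overset{\pi}{\to} \OO_{X_m} \to \OO_{X_1} \to 0 .
\]
Applying $E \Lo_{\OO_{X_m}}(-)$ and using, for the closed immersions involved, the identification $E \Lo_{\OO_{X_m}} {\iota}_\ast \OO \cong \iota_\ast L\iota^\ast E$, this produces a triangle
\[
{\iota_{m,m-1}}_\ast L\iota_{m,m-1}^\ast E \to E \to {\iota_{m,1}}_\ast L\iota_{m,1}^\ast E \to {\iota_{m,m-1}}_\ast L\iota_{m,m-1}^\ast E[1]
\]
in $D(X_m)$. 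Writing $A$ and $C$ for the outer terms, the whole strategy reduces to showing $\HH^0_{\Ap_m}(A) = 0$ and $\HH^0_{\Ap_m}(C) = 0$: since $E \in \Ap_m$ we have $\HH^0_{\Ap_m}(E) = E$, and the long exact sequence of $\Ap_m$-cohomology then reads $0 = \HH^0(A) \to E \to \HH^0(C) = 0$, forcing $E = 0$.

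The key input for locating $A$ and $C$ is that ${\iota_{m,m'}}_\ast$ is t-exact for the $\Ap$-structures: it is exact and, as noted in the introduction, carries $\Ap_{m'}$ into $\Ap_m$, hence preserves $D^{\leq \bullet}_{\Ap}$ and $D^{\geq \bullet}_{\Ap}$ and commutes with the functors $\HH^i_{\Ap}$. Combined with the right t-exactness of the derived pullbacks (Lemma \ref{cohomology-lemma0}), the hypothesis $\HH^0_{\Ap_1} L\iota_{m,1}^\ast E = 0$ immediately yields $\HH^0_{\Ap_m}(C) = 0$: indeed $L\iota_{m,1}^\ast E \in D^{\leq 0}_{\Ap_1}$ with vanishing $\HH^0$, so it lies in $D^{\leq -1}_{\Ap_1}$, and pushing forward gives $C \in D^{\leq -1}_{\Ap_m}$.

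The base cases are quick. For $m=1$ the functor $L\iota_{1,1}^\ast$ is the identity and $\HH^0_{\Ap_1}(E) = E$, so there is nothing to prove; for $m=2$ one has $A = C = {\iota_{2,1}}_\ast L\iota_{2,1}^\ast E$, and the argument of the previous paragraph gives $\HH^0(A) = \HH^0(C) = 0$, whence the squeeze finishes the case. For the inductive step $m \geq 3$, the only remaining task is $\HH^0_{\Ap_m}(A) = 0$, equivalently (by t-exactness of ${\iota_{m,m-1}}_\ast$) the vanishing $\HH^0_{\Ap_{m-1}} L\iota_{m,m-1}^\ast E = 0$. Here I use the factorisation $\iota_{m,1} = \iota_{m,m-1} \circ \iota_{m-1,1}$ and Corollary \ref{cohomology-lemma2}, whose index hypothesis $1 < m-1 < m$ holds precisely because $m \geq 3$, to rewrite
\[
0 = \HH^0_{\Ap_1} L\iota_{m,1}^\ast E \cong \HH^0_{\Ap_1} L\iota_{m-1,1}^\ast \bigl( \HH^0_{\Ap_{m-1}} L\iota_{m,m-1}^\ast E \bigr).
\]
Setting $E' := \HH^0_{\Ap_{m-1}} L\iota_{m,m-1}^\ast E \in \Ap_{m-1}$, this says $\HH^0_{\Ap_1} L\iota_{m-1,1}^\ast E' = 0$, so the inductive hypothesis forces $E' = 0$, i.e.\ $\HH^0_{\Ap_m}(A) = 0$, and the squeeze gives $E = 0$.

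The points needing care are the t-exactness of the pushforwards (so that they commute with $\HH^i_{\Ap}$) and the index bookkeeping for Corollary \ref{cohomology-lemma2}, whose constraint $m'' < m'$ fails at $m=2$ and is exactly what forces a separate base case there. The genuine obstacle is controlling both outer terms of the triangle at once: the vanishing of $\HH^0(C)$ is immediate from the hypothesis, but the vanishing of $\HH^0(A)$ is not available directly and must be obtained by funnelling the hypothesis through Corollary \ref{cohomology-lemma2} down to the previous stage of the induction.
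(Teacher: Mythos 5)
Your proof is correct, but it follows a genuinely different route from the paper's. The paper argues directly with standard cohomology: the hypothesis together with right t-exactness puts $L\iota_{m,1}^\ast E$ in $D^{\leq -1}_{\Ap_1}$, so $H^0(L\iota_{m,1}^\ast E)\cong \iota_{m,1}^\ast H^0(E)=0$, whence $H^0(E)=0$ by the derived Nakayama lemma of \cite[Lemma 2.1.3]{Lieblich}; and $H^{-1}(L\iota_{m,1}^\ast E)\in\Coh_{\leq 1}(X_k)$ combined with Lemma \ref{flatness-lemma1} forces $H^{-1}(E)\in\Coh_{\leq 1}(X_m)\cap\Coh_{\geq 2}(X_m)=0$, so $E\in\Ap_m\cap D^{\leq -1}_{\Ap_m}=0$. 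You instead run an induction on $m$ through the triangle obtained by applying $E\Lo_{\OO_{X_m}}(-)$ to $0\to\OO_{X_{m-1}}\to\OO_{X_m}\to\OO_{X_1}\to 0$, squeeze $E=\HH^0(E)$ between the vanishing $\HH^0$'s of the two outer terms, and use Corollary \ref{cohomology-lemma2} to funnel the hypothesis down to level $m-1$; the index bookkeeping (separate base case at $m=2$) is handled correctly. Both arguments are sound and non-circular (Lemma \ref{cohomology-lemma0} and Corollary \ref{cohomology-lemma2} do not depend on this lemma). The paper's proof is shorter and works at the module level; yours stays entirely within the t-structure formalism --- right t-exact pullbacks, t-exact and conservative pushforwards, the composition law of Corollary \ref{cohomology-lemma2}, and the projection formula $E\Lo\iota_\ast\OO\cong\iota_\ast L\iota^\ast E$ --- so it makes no direct appeal to Nakayama or to Lemma \ref{flatness-lemma1} (which enter only through the already-established right t-exactness), and it would transfer verbatim to the abstract setting of the Remark closing this section. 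The two facts you should state explicitly rather than leave implicit are the projection formula for these closed immersions in $D^-$ and the t-exactness of ${\iota_{m,m'}}_\ast$ for the tilted t-structures (so that it commutes with $\HH^0_{\Ap}$ and detects vanishing); both are true and consistent with how the paper uses these functors elsewhere.
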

\begin{proof}
Suppose $\HH^0 L\iota_{m,1}^\ast E=0$.  Since $\HH^0 L\iota_{m,1}^\ast$ is right exact, we have $L\iota_{m,1}^\ast E \in D^{\leq -1}_{\Ap}$.  So the cohomology $H^0(L\iota_{m,1}^\ast E)$ is zero.  However, $H^0(L\iota_{m,1}^\ast E) \cong \iota_{m,1}^\ast H^0(E)$, so we in fact have $H^0(E)=0$ by \cite[Lemma 2.1.3]{Lieblich}.  We also get $H^{-1}(L\iota_{m,1}^\ast E) \in\Coh_{\leq 1}(X_k)$, so by Lemma \ref{flatness-lemma1} below we know $H^{-1}(E) \in \Coh_{\leq 1} (X_m)$, i.e.\ $E \in D^{\leq -1}_{\Ap_m}$, forcing $E=0$ in $\Ap_m$.
\end{proof}

\begin{lemma}\label{cohomology-lemma5}
(a) For any $m \geq 1$, we have an isomorphism of functors from $\Ap_m$ to $\Ap_m$:
$$\HH^0_{\Ap_m} L\iota_m^\ast \iotam_\ast \overset{\thicksim}{\to} \text{id}_{\Ap_m}.$$

(b) For any $1\leq m' < m$, we have an isomorphism of functors from $\Ap_{m'}$ to $\Ap_{m'}$:
$$ \HH^0_{\Ap_{m'}} L\iota_{m,m'}^\ast {\iota_{m,m'}}_\ast \overset{\thicksim}{\to} \text{id}_{\Ap_{m'}}.$$
\end{lemma}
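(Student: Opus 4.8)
The plan is to produce the asserted natural isomorphism as $\HH^0$ of the adjunction counit, and then to show that this counit is, on each object, a split epimorphism whose kernel lies in cohomological degree $\leq -1$ for the tilted t-structure. Write $\varepsilon_G$ for the counit of the adjunction $L\iota_m^\ast \dashv \iotam_\ast$ (resp.\ $L\iota_{m,m'}^\ast \dashv {\iota_{m,m'}}_\ast$), a natural transformation $L\iota_m^\ast\iotam_\ast \Rightarrow \mathrm{id}$. Since $\HH^0_{\Ap_m}(G)=G$ for $G$ in the heart, applying the functor $\HH^0_{\Ap_m}$ to $\varepsilon_G$ yields a natural transformation $\HH^0_{\Ap_m}L\iota_m^\ast\iotam_\ast \Rightarrow \mathrm{id}_{\Ap_m}$; the entire task is to check it is an isomorphism on each $G$, after which naturality is automatic from functoriality of $\HH^0_{\Ap_m}$.

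For part (a), I would compute $L\iota_m^\ast\iotam_\ast G$ with the two-term resolution $[\OO_{X_R}\overset{\pi^m}{\to}\OO_{X_R}]$ of $\OO_{X_m}$, exactly as in the displayed computation preceding the lemma. Because $\pi^m$ acts as zero on the $\OO_{X_m}$-module $\iotam_\ast G$, the differential vanishes and $L\iota_m^\ast\iotam_\ast G\cong G\oplus G[1]$ canonically, the degree-$0$ summand being the one coming from the degree-$0$ term of the resolution. Tracing $\varepsilon_G$ through the augmentation $[\OO_{X_R}\overset{\pi^m}{\to}\OO_{X_R}]\to\OO_{X_m}$ identifies it with the projection onto $G$, whose kernel is $G[1]$. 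As $G\in\Ap_m$ gives $G[1]\in\Ap_m[1]\subseteq D^{\leq -1}_{\Ap_m}$, we have $\HH^0_{\Ap_m}(G[1])=\HH^1_{\Ap_m}(G[1])=0$, so the long exact sequence of $\Ap_m$-cohomology attached to the triangle $G[1]\to L\iota_m^\ast\iotam_\ast G\overset{\varepsilon_G}{\to}G\to G[2]$ forces $\HH^0_{\Ap_m}(\varepsilon_G)$ to be an isomorphism.

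Part (b) runs the same way with the infinite periodic resolution $\OO_{X_{m'}}\cong[\cdots\overset{\pi^{m'}}{\to}\OO_{X_m}\overset{\pi^{m-m'}}{\to}\OO_{X_m}\overset{\pi^{m'}}{\to}\OO_{X_m}]$ over $\OO_{X_m}$ recalled earlier. Again $\pi^{m'}$ kills ${\iota_{m,m'}}_\ast G$, so the differential into the degree-$0$ column is zero and $L\iota_{m,m'}^\ast{\iota_{m,m'}}_\ast G\cong G\oplus M'$, where $M'$ is the total complex of the resolution-degrees $\leq -1$ and $\varepsilon_G$ is again the projection onto $G$ with kernel $M'$. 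The crux is to verify $M'\in D^{\leq -1}_{\Ap_{m'}}$. For degree reasons $M'$ has vanishing standard cohomology in degrees $\geq 0$; representing $G$ by a two-term complex $[G^{-1}\to G^0]$ and running the double-complex bookkeeping (finite in each total degree) gives $H^{-1}(M')\cong H^0(G)/\pi^{m-m'}H^0(G)$, a quotient of $H^0(G)\in\Coh_{\leq 1}(X_{m'})$, hence itself in $\Coh_{\leq 1}(X_{m'})$. By the description of the tilted t-structure in Proposition \ref{tstructureDX}, an object whose standard cohomology vanishes in degrees $\geq 0$ and whose $H^{-1}$ is torsion lies in $D^{\leq -1}_{\Ap_{m'}}$; thus $\HH^0_{\Ap_{m'}}(M')=\HH^1_{\Ap_{m'}}(M')=0$ and the same long exact sequence argument concludes.

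The main obstacle is part (b): one must work with the infinite periodic resolution and, because $G$ is a genuine two-term object of $\Ap_{m'}$ rather than a sheaf, carry out the double-complex computation identifying $H^{-1}(M')$ as a quotient of the torsion part $H^0(G)$, which is precisely what places $M'$ in $\Ap$-degree $\leq -1$. The secondary point requiring care is the identification of $\varepsilon_G$ with the split projection via the augmentation of the resolution, since it is this splitting that makes the long exact sequence collapse to the desired isomorphism.
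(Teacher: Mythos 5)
Your overall strategy---apply $\HH^0_{\Ap_m}$ to the adjunction counit $\varepsilon_G$, show its co-cone lies in $D^{\leq -1}_{\Ap_m}$, and conclude from the long exact sequence of $\Ap_m$-cohomology---is the right shape and close in spirit to the paper's. But the step you lean on in both parts, namely ``the differential vanishes and $L\iota_m^\ast\iotam_\ast G\cong G\oplus G[1]$ canonically,'' is a genuine gap, and the claim is in fact false in general. The Koszul computation $\iotam_\ast G\Lo_{\OO_{X_R}}[\OO_{X_R}\overset{\pi^m}{\to}\OO_{X_R}]\cong\iotam_\ast G\oplus\iotam_\ast G[1]$ takes place in $D(X_R)$: it computes $\iotam_\ast L\iota_m^\ast\iotam_\ast G$, not $L\iota_m^\ast\iotam_\ast G$, because the terms of that resolution are $\OO_{X_R}$-modules rather than $\OO_{X_m}$-modules. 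Since $\iotam_\ast$ is exact and faithful but not full on derived categories, a splitting of the pushforward does not descend to $D(X_m)$; this is precisely the subtlety the paper flags in the paragraph preceding Proposition \ref{tstructureDX}. Concretely, for $m=2$ and $G=\OO_{X_1}[1]\in\Ap_2$ one finds $L\iota_2^\ast\iota_{2\ast}\OO_{X_1}\cong[\OO_{X_2}\overset{\pi}{\to}\OO_{X_2}]$, and a computation with the projective resolution $[\cdots\overset{\pi}{\to}\OO_{X_2}\overset{\pi}{\to}\OO_{X_2}]$ of $\OO_{X_1}$ over $\OO_{X_2}$ shows that every morphism $\OO_{X_1}\to[\OO_{X_2}\overset{\pi}{\to}\OO_{X_2}]$ in $D(X_2)$ induces the zero map on $H^0$; hence the identity of $H^0$ does not lift and the complex does not split. (For $m=1$ the splitting happens to hold, since every object of $D(X_k)$ extends to $X_R=X\otimes_k R$ via the projection to $X$ and one can invoke the projection formula; for $m\geq 2$ there is no such retraction of $\iota_m$.) The same objection applies, amplified, to part (b) and the infinite periodic resolution, where you moreover split off the degree-$0$ column as a direct summand $M'$ in $D(X_{m'})$ without justification.

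The good news is that your argument only uses the splitting to conclude that the co-cone $K$ of $\varepsilon_G$ lies in $D^{\leq -1}_{\Ap_m}$, and that much survives. Pushing the triangle $K\to L\iota_m^\ast\iotam_\ast G\to G\to K[1]$ forward, and using that $\iotam_\ast\varepsilon_G$ is split by the unit (the counit--unit identity), one gets $\iotam_\ast K\cong\iotam_\ast G[1]$ in $D(X_R)$; by exactness and faithfulness of $\iotam_\ast$ on cohomology sheaves this gives $H^i(K)=0$ for $i\geq 0$ and $H^{-1}(K)\cong H^0(G)\in\Coh_{\leq 1}(X_m)$, so $K\in D^{\leq -1}_{\Ap_m}$ and your long exact sequence argument closes. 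This repair---comparing standard cohomology sheaves after pushforward instead of splitting the complex upstairs---is essentially what the paper's proof does when it verifies that $H^0(\theta_Y)$ and $H^{-1}(\tau^{\geq 0}\theta_Y)$ are isomorphisms. You should rewrite both parts in that form; as written, the identification of the co-cone with $G[1]$, and of $M'$ as a direct summand in $D(X_{m'})$, is not justified.
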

\begin{proof}
Let us  prove part (a) - the proof of part (b) is analogous.  To start with, note that we have the adjoint pair $L\iota_m^\ast \dashv \iotam_\ast$.  Therefore, we have a morphism of functors $L\iota_m^\ast \iotam_\ast \to \text{id}_{D^-(X_m)}$.  In particular, this means that for any morphism $Y \overset{f}{\to} Z$ in $\Ap_m \subset D^-(X_m)$ we have a commutative diagram in $D^-(X_m)$

\begin{equation*}
\xymatrix{
  L\iota_m^\ast \iotam_\ast Y \ar[rr]^{L\iota_m^\ast \iotam_\ast f} \ar[d]^{\theta_Y} & & L\iota_m^\ast \iotam_\ast Z \ar[d]^{\theta_Z} \\
  Y \ar[rr]^f && Z
}.
\end{equation*}
Applying the truncation functor $\tau^{\geq 0}_{\Ap_m}$ to the whole diagram, we get a commutative diagram in $\Ap_m$
\begin{equation*}
\xymatrix{
  \tau^{\geq 0} L\iota_m^\ast \iotam_\ast Y \ar[rr]^{\tau^{\geq 0} L\iota_m^\ast \iotam_\ast f} \ar[d]^{\tau^{\geq 0}\theta_Y} & & \tau^{\geq 0} L\iota_m^\ast \iotam_\ast Z \ar[d]^{\tau^{\geq 0}\theta_Z} \\
  Y \ar[rr]^{f} && Z
}.
\end{equation*}
By the exactness of $\iotam_\ast$ and right t-exactness of $L\iota_m^\ast \iotam_\ast$, we have $\tau^{\geq 0} L\iota_m^\ast \iotam_\ast Y = \HH^0 L\iota_m^\ast \iotam_\ast Y$, and similarly for $Z$.  If we can now show that $\tau^{\geq 0}\theta_Y$ and $\tau^{\geq 0}\theta_Z$ are isomorphisms,  we would be done.  Let us just check this for $\tau^{\geq 0}\theta_Y$.  Since quasi-isomorphisms are isomorphisms in the derived category, it would be enough to show that $H^i (\tau^{\geq 0} \theta_Y)$ is an isomorphism for all $i$.

Since the composition
\[
\xymatrix{
 \iotam_\ast Y \ar[r] & \iotam_\ast L\iota_m^\ast \iotam_\ast Y \ar[rr]^{\iotam_\ast \theta_Y}  &&\iotam_\ast Y
 }
 \]
(the first map being adjunction) is an isomorphism by \cite[(1.5.9)]{CS}, and $H^0(\iotam_\ast L\iota_m^\ast \iotam_\ast Y) \cong  H^0(\iotam_\ast Y)$, we have that $H^0 (\iotam_\ast \theta_Y)$ is a surjection between isomorphic sheaves on $X_R$.  By the exactness of $\iotam_\ast$, we see $H^0 (\theta_Y)$ must have been a surjection between isomorphic sheaves on $X_m$ to start with.  Comparing Hilbert polynomials, we conclude that $H^0(\theta_Y)=H^0(\tau^{\geq 0}_{\Ap_m} \theta_Y)$ is an isomorphism.

Lastly, we show that $H^{-1}(\tau^{\geq 0}\theta_Y)$ is also an isomorphism.  If we apply $H^{-1}$ to the sequence of maps $\iotam_\ast Y \to \iotam_\ast L\iota_m^\ast \iotam_\ast Y \to \iotam_\ast Y$, we obtain the sequence

\[
\xymatrix{
\iotam_\ast H^{-1}(Y)\ar[r] & \iotam_\ast H^0(Y) \oplus \iotam_\ast H^{-1}(Y) \ar[rr]^(.6){\iotam_\ast H^{-1}(\theta_Y)} && \iotam_\ast H^{-1}(Y)
}
\]
whose composition is an isomorphism.  Thus $\iotam_\ast H^{-1}(\theta_Y)$ is a surjection.  So $H^{-1}(\theta_Y)$ must have been a surjection to start with.

On the other hand, we have the commutative diagram (see the proof of \cite[Lemma 5(a)]{GM})
$$\xymatrix{
L\iota_m^\ast \iotam_\ast Y \ar[r]^(.4)\ep \ar[d]^{\theta_Y} & \tau^{\geq 0} L\iotam_\ast \iotam_\ast Y    \ar[d]^{\tau^{\geq 0}\theta_Y} \\
Y \ar[r]^(.4)1 & \tau^{\geq 0}Y = Y
}.$$
If we can show that $H^{-1}(\ep)$ is surjective, then $H^{-1}(\tau^{\geq 0}\theta_Y)$ would be a surjection between isomorphic sheaves on $X_m$, and hence an isomorphism, and we would be done.  However, if we look at the canonical exact triangle
$$ \tau^{\leq -1} L\iota_m^\ast \iotam_\ast Y \to L\iota_m^\ast \iotam_\ast Y \overset{\ep}{\to} \tau^{\geq 0} L\iota_m^\ast \iotam_\ast Y \to \tau^{\leq -1} L\iota_m^\ast \iotam_\ast Y [1]$$
and take its long exact sequence of cohomology with respect to the standard t-structure, we get
$$ \cdots \to H^0(Y) \to H^{-1}(L\iota^\ast_m \iotam_\ast Y) \overset{H^{-1}(\ep)}{\longrightarrow} H^{-1}(Y) \to 0 \to \cdots .$$
And so $H^{-1}(\ep)$ is a surjection, as we wanted.

For part (b), we start with the adjoint pair $L\iota_{m,m'}^\ast \dashv {\iota_{m,m'}}_\ast$.  In this case, we need Lemma \ref{cohomology-lemma0}.  The same proof still works.
\end{proof}

\noindent\textit{Remark.}  The proof above illustrates the philosophy, that if we find it difficult to compare cohomology of objects in $D(X_m)$, we can always push them forward to $D(X_R)$ and compare cohomology there, provided that there is compatibility of the t-structures involved.  (In this case, we are using the standard t-structures on $D(X_m)$ and $D(X_R)$.)

\begin{lemma}\label{cohomology-lemma4}
(a) Let $1 \leq m$.  Given any $A,B \in \Ap_m$, we have
\[
\Hom_{\Ap_m} (A,B) \overset{\thicksim}{\to} \Hom_{D(X_R)} (\iotam_\ast A, \iotam_\ast B)
\]
and this isomorphism is given by $f \mapsto \iotam_\ast f$. \newline
(b) Let $1 \leq m' < m$.  Given any $A,B \in\Ap_{m'}$, we have
\[
\Hom_{\Ap_{m'}} (A,B) \overset{\thicksim}{\to} \Hom_{\Ap_m} ({\iota_{m,m'}}_\ast A, {\iota_{m,m'}}_\ast B)
\]
and this isomorphism is given by $f \mapsto {\iota_{m,m'}}_\ast (f)$. \newline
\end{lemma}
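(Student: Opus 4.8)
The plan is to establish part (a) directly, reducing every $\Hom$ in sight to a $\Hom$ in $D(X_R)$, and then to deduce part (b) from (a) by composing the two closed immersions rather than re-running the argument. The entry point for (a) is that the heart $\Ap_m$ is by definition a full subcategory of $D(X_m)$, so $\Hom_{\Ap_m}(A,B)=\Hom_{D(X_m)}(A,B)$ for free. To pass to $X_R$ I would invoke the adjunction $L\iota_m^\ast \dashv \iotam_\ast$ from the Notation section, giving a natural isomorphism $\Hom_{D(X_R)}(\iotam_\ast A,\iotam_\ast B)\cong \Hom_{D(X_m)}(L\iota_m^\ast\iotam_\ast A,B)$. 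The crucial input is the computation already performed before Proposition \ref{tstructureDX}: resolving $\OO_{X_m}$ by $[\OO_{X_R}\overset{\pi^m}{\to}\OO_{X_R}]$ and using that $\pi^m$ annihilates every $\OO_{X_m}$-module. Carried out before applying $\iotam_\ast$, the same computation yields $L\iota_m^\ast\iotam_\ast A\cong A\oplus A[1]$ in $D(X_m)$ for any $A$.

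Given this splitting, $\Hom_{D(X_m)}(L\iota_m^\ast\iotam_\ast A,B)\cong \Hom_{D(X_m)}(A,B)\oplus\Hom_{D(X_m)}(A[1],B)$, and the second summand vanishes: $\Hom_{D(X_m)}(A[1],B)=\Hom_{D(X_m)}(A,B[-1])$ with $A\in D^{\leq 0}_{\Ap_m}$ and $B[-1]\in D^{\geq 1}_{\Ap_m}$, so the orthogonality axiom of the t-structure (axiom (2) in Section \ref{section-tstructures}) forces it to be zero. This produces the abstract isomorphism $\Hom_{\Ap_m}(A,B)\cong\Hom_{D(X_R)}(\iotam_\ast A,\iotam_\ast B)$. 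To check it is induced by $f\mapsto\iotam_\ast f$, I would trace the composite through the adjunction: by naturality of the counit $\epsilon$, $\iotam_\ast f$ corresponds to $f\circ\epsilon_A$; since $\Hom(A[1],A)=0$ the counit $\epsilon_A:A\oplus A[1]\to A$ has the form $(\phi,0)$, and Lemma \ref{cohomology-lemma5}(a) identifies $\tau^{\geq 0}_{\Ap_m}\epsilon_A$ with an isomorphism, so $\phi$ is an automorphism of $A$ and $f\mapsto f\phi$ is a bijection. Hence $f\mapsto\iotam_\ast f$ realizes the isomorphism.

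For part (b) I would avoid the level-$m$ version of this argument — where $L\iota_{m,m'}^\ast{\iota_{m,m'}}_\ast A$ is typically an unbounded complex — and bootstrap from (a) instead. Since ${\iota_{m,m'}}_\ast$ is exact it carries $\Ap_{m'}$ into $\Ap_m$, so part (a) applies to ${\iota_{m,m'}}_\ast A,{\iota_{m,m'}}_\ast B\in\Ap_m$ and gives $\Hom_{\Ap_m}({\iota_{m,m'}}_\ast A,{\iota_{m,m'}}_\ast B)\cong\Hom_{D(X_R)}(\iotam_\ast{\iota_{m,m'}}_\ast A,\iotam_\ast{\iota_{m,m'}}_\ast B)$ via $g\mapsto\iotam_\ast g$. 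Because $\iota_m\circ\iota_{m,m'}=\iota_{m'}$ we have $\iotam_\ast{\iota_{m,m'}}_\ast={\iota_{m'}}_\ast$, so the right-hand side is exactly $\Hom_{D(X_R)}({\iota_{m'}}_\ast A,{\iota_{m'}}_\ast B)$, which by part (a) at level $m'$ is isomorphic to $\Hom_{\Ap_{m'}}(A,B)$ via $f\mapsto{\iota_{m'}}_\ast f$. Chasing $g={\iota_{m,m'}}_\ast f$ through $\iotam_\ast g={\iota_{m'}}_\ast f$ shows the two identifications agree, so $f\mapsto{\iota_{m,m'}}_\ast f$ is the claimed isomorphism.

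The point to be careful about — and the reason one cannot simply quote full faithfulness of $\iotam_\ast$ — is precisely that $\iotam_\ast$ is \emph{not} fully faithful: the splitting produces the spurious summand $A[1]$. What rescues the statement is that $A$ and $B$ lie in the heart, which is exactly what kills $\Hom(A[1],B)$. So the genuine content is verifying this vanishing and then confirming that the resulting abstract isomorphism is the concrete functorial map, for which the counit identification supplied by Lemma \ref{cohomology-lemma5}(a) is the essential ingredient; everything else is formal.
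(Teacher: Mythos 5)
There is a genuine gap in your proof of part (a): the asserted splitting $L\iota_m^\ast\iotam_\ast A\cong A\oplus A[1]$ in $D(X_m)$ does not follow from the computation at the start of Section 5, and for $m\geq 2$ it is false. Resolving $\OO_{X_m}$ by $[\OO_{X_R}\overset{\pi^m}{\to}\OO_{X_R}]$ computes the derived tensor product only as an object of $D(X_R)$ --- the terms of that resolution are not $\OO_{X_m}$-modules --- so what one obtains is $\iotam_\ast L\iota_m^\ast\iotam_\ast A\cong \iotam_\ast A\oplus\iotam_\ast A[1]$, a splitting \emph{after} pushforward; it does not descend along $\iotam_\ast$ (the failure of $\iotam_\ast$ to be fully faithful, which you correctly flag, is exactly why it cannot be descended for free). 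Concretely: $X_R$ is regular, so $\iotam_\ast A$ is a perfect complex and hence so is $L\iota_m^\ast\iotam_\ast A$ on $X_m$; but for $m\geq 2$ and $A=k(x)$ a skyscraper sheaf at a closed point $x\in X_k$ (an object of $\Coh_{\leq 1}(X_m)\subset\Ap_m$), the local ring $\OO_{X_m,x}=\OO_{X_R,x}/(\pi^m)$ is not regular, $k(x)$ has infinite Tor-dimension, and $k(x)\oplus k(x)[1]$ cannot be perfect. All that is true in $D(X_m)$ is the generally non-split triangle $A[1]\to L\iota_m^\ast\iotam_\ast A\to A\to A[2]$ of Corollary \ref{exacttriangleApmXm} --- and that corollary is itself proved using the present lemma, so it is not available here. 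The error propagates: your identification of the counit as $(\phi,0)$, and your part (b), which invokes part (a) at level $m\geq 2$, both rest on the false splitting.

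The repair is the paper's route, which is close in spirit to what you wrote but never claims a direct sum decomposition: by right t-exactness of $L\iota_m^\ast\iotam_\ast$ (Lemma \ref{cohomology-lemma0}) one has $L\iota_m^\ast\iotam_\ast A\in D^{\leq 0}_{\Ap_m}$, so $\tau^{\geq 0}_{\Ap_m}L\iota_m^\ast\iotam_\ast A=\HH^0_{\Ap_m}L\iota_m^\ast\iotam_\ast A\cong A$ by Lemma \ref{cohomology-lemma5}(a), and the adjunction property of the truncation functor (\cite[Proposition 10.1.4(i)]{SM}) gives $\Hom(L\iota_m^\ast\iotam_\ast A,B)\cong\Hom(\HH^0_{\Ap_m} L\iota_m^\ast\iotam_\ast A,B)\cong\Hom(A,B)$ for $B$ in the heart. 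This uses only the truncation triangle, whose lower piece lies in $D^{\leq -1}_{\Ap_m}$ and is killed by exactly the orthogonality you invoke; combining with $\Hom_{D(X_R)}(\iotam_\ast A,\iotam_\ast B)\cong\Hom_{D(X_m)}(L\iota_m^\ast\iotam_\ast A,B)$ finishes (a), and the compatibility with $f\mapsto\iotam_\ast f$ then follows from naturality of the counit together with $\HH^0(\eta)$ being an isomorphism, rather than from the shape of a splitting. With (a) established this way, your deduction of (b) from (a) via $\iotam_\ast\circ{\iota_{m,m'}}_\ast={\iota_{m'}}_\ast$ is correct and is a clean alternative to the paper's direct argument at level $(m,m')$.
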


\begin{proof}
Let us just prove part (b) - the proof of part (a) is analogous.  For $A,B \in \Ap_{m'}$, we have
\begin{align*}
  \Hom_{\Ap_{m'}} (A,B) &\cong \Hom_{D(X_{m'})} (A,B) \\
  &\cong \Hom_{D(X_{m'})} (\HH^0 L\iota_{m,m'}^\ast {\iota_{m,m'}}_\ast A,B) \text{ by Lemma \ref{cohomology-lemma5}(b)}\\
&\cong \Hom_{D(X_{m'})} (L\iota_{m,m'}^\ast {\iota_{m,m'}}_\ast A, B) \text{ by \cite[Proposition 10.1.4(i)]{SM}}\\
&\cong \Hom_{D(X_m)}({\iota_{m,m'}}_\ast A, {\iota_{m,m'}}_\ast B) \text{ by adjunction} \\
&= \Hom_{\Ap_m} ({\iota_{m,m'}}_\ast A, {\iota_{m,m'}}_\ast B) .
\end{align*}

In fact, given $f \in \Hom_{\Ap_{m'}}(A,B)$, the images of $f$ in the various sets above are related by the following diagram, where $\eta$ is the adjunction map
$$\xymatrix{
  \HH^0 L\iota_{m,m'}^\ast {\iota_{m,m'}}_\ast A \ar[rr]^(.6){\HH^0(\eta)} & & A \ar[r]^f & B \\
  L\iota^\ast_{m,m'} {\iota_{m,m'}}_\ast A \ar[urr]_\eta \ar[u] & & &
}.$$
\end{proof}

%% I added the following remark to allay my confusion ove something, which I don't remember now.
%%\noindent\textit{Remark.}  In general, the pushforward functor ${\iota_{m,m'}}_\ast : D(X_{m'}) \to D(X_m)$ (where $1\leq m' < m$) is not faithful.  This does not contradict the lemma above because, a priori, there may be a nonzero $$\al \in \Ext^1_{D(X_{m'})} (A,B) \cong \Hom_{D(X_{m'})}(A,B[1])$$ for some $A,B \in X_{m'}$ such that $$0={\iota_{m,m'}}_\ast \al \in \Ext^1_{D(X_m)}({\iota_{m,m'}}_\ast A, {\iota_{m,m'}}_\ast B) \cong \Hom_{D(X_m)}({\iota_{m,m'}}_\ast A, {\iota_{m,m'}}_\ast B[1])$$

\begin{coro}\label{exacttriangleApmXm}
For any object $F \in \Ap_m$, we have the exact triangle in $D(X_m)$
$$ F[1] \to L\iota_m^\ast \iotam_\ast F \to F \to F[2].$$
\end{coro}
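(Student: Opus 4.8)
The plan is to exhibit the triangle by applying the exact functor $L\iota_m^\ast$ to a single short exact sequence on $X_R$ and then tensoring the result by $F$. Since $X_m$ is the Cartier divisor on the $R$-flat scheme $X_R$ cut out by the non-zero-divisor $\pi^m$, we have on $X_R$ the tautological short exact sequence of locally free sheaves
\[
0 \to \OO_{X_R} \overset{\pi^m}{\to} \OO_{X_R} \to \iotam_\ast \OO_{X_m} \to 0,
\]
which is exactly the resolution of $\iotam_\ast\OO_{X_m}$ used in the example preceding Proposition \ref{tstructureDX}. Viewing it as an exact triangle in $D(X_R)$ and applying the triangulated functor $L\iota_m^\ast$ produces, genuinely, an exact triangle in $D(X_m)$.

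First I would compute the pulled-back terms. As $\OO_{X_R}$ is locally free, $L\iota_m^\ast\OO_{X_R}=\OO_{X_m}$ with no higher Tor, so the triangle reads
\[
\OO_{X_m} \overset{\iota_m^\ast(\pi^m)}{\longrightarrow} \OO_{X_m} \to L\iota_m^\ast \iotam_\ast \OO_{X_m} \to \OO_{X_m}[1].
\]
The decisive point is that $\iota_m^\ast(\pi^m)=0$: multiplication by $\pi^m$ restricts to multiplication by the image of $\pi^m$ in $\OO_{X_m}=\OO_{X_R}/\pi^m\OO_{X_R}$, which vanishes (equivalently, the conormal line bundle of $X_m$ in $X_R$ is trivialised by $\pi^m$, and the first map is the pullback of the ideal-sheaf inclusion). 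Hence the triangle splits, $L\iota_m^\ast\iotam_\ast\OO_{X_m}\cong\OO_{X_m}\oplus\OO_{X_m}[1]$, and rearranging the split triangle gives $\OO_{X_m}[1]\to L\iota_m^\ast\iotam_\ast\OO_{X_m}\to\OO_{X_m}\overset{0}{\to}\OO_{X_m}[2]$, which is the desired triangle for $F=\OO_{X_m}$.

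To pass to an arbitrary $F\in\Ap_m$, I would apply the triangulated functor $-\Lo_{\OO_{X_m}}F$ to this last triangle, using $\OO_{X_m}\Lo_{\OO_{X_m}}F=F$ together with the base-change (projection formula) isomorphism $(L\iota_m^\ast\iotam_\ast\OO_{X_m})\Lo_{\OO_{X_m}}F\cong L\iota_m^\ast\iotam_\ast F$ to identify the middle term. This yields
\[
F[1] \to L\iota_m^\ast \iotam_\ast F \to F \overset{0}{\to} F[2]
\]
in $D(X_m)$, completing the proof.

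The step I expect to be the main obstacle is precisely the base-change identification $(L\iota_m^\ast\iotam_\ast\OO_{X_m})\Lo_{\OO_{X_m}}F\cong L\iota_m^\ast\iotam_\ast F$, the global, module-structure-sensitive form of the associativity $\OO_{X_m}\Lo_{\OO_{X_R}}\iotam_\ast F\cong(\OO_{X_m}\Lo_{\OO_{X_R}}\iotam_\ast\OO_{X_m})\Lo_{\OO_{X_m}}F$. The care needed here is exactly the subtlety flagged in the example before Proposition \ref{tstructureDX}: the naive computation $[\OO_{X_R}\overset{\pi^m}{\to}\OO_{X_R}]\otimes\iotam_\ast F$ only recovers the pushforward $\iotam_\ast L\iota_m^\ast\iotam_\ast F$, so one must track the honest $\OO_{X_m}$-module structure rather than argue on $X_R$ (where $\iotam_\ast$ is not fully faithful). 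Routing the whole construction through the functor $L\iota_m^\ast$ and the intrinsic tensor $\Lo_{\OO_{X_m}}$, as above, is what keeps every term inside $D(X_m)$.
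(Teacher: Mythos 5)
Your computation that $L\iota_m^\ast\iotam_\ast\OO_{X_m}\cong\OO_{X_m}\oplus\OO_{X_m}[1]$ is correct, but the step you yourself single out as the main obstacle --- the identification $(L\iota_m^\ast\iotam_\ast\OO_{X_m})\Lo_{\OO_{X_m}}F\cong L\iota_m^\ast\iotam_\ast F$ --- is not an instance of the projection formula and is false in general. What the projection formula actually yields is an isomorphism \emph{after applying} $\iotam_\ast$: both objects push forward to $\iotam_\ast F\oplus\iotam_\ast F[1]$ in $D(X_R)$, but two objects of $D(X_m)$ with isomorphic pushforwards need not be isomorphic unless the isomorphism is induced by a morphism on $X_m$ (this is exactly the subtlety the paper flags before Proposition \ref{tstructureDX}, and it is why Lemma \ref{cohomology-lemma4} is stated for morphisms). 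Your chain of identifications would give $L\iota_m^\ast\iotam_\ast F\cong F\oplus F[1]$ with zero connecting map $F\to F[2]$ for every $F\in\Ap_m$, which is strictly stronger than the corollary and is wrong. For a counterexample take $m\geq 2$ and $F=\OO_{X_k}[1]\in\Coh_{\geq 2}(X_m)[1]\subset\Ap_m$; up to shift this reduces to the sheaf $\OO_{X_k}$, whose local model is the $B$-module $B/\pi$ for $B=A\otimes_k R/\pi^m$, $\tilde B=A\otimes_k R$. Resolving $\iotam_\ast(B/\pi)=\tilde B/\pi$ by $[\tilde B\xrightarrow{\pi}\tilde B]$ gives $L\iota_m^\ast\iotam_\ast(B/\pi)\simeq[B\xrightarrow{\pi}B]$, and since this is a bounded complex of frees one computes $\Hom_{D(B)}([B\xrightarrow{\pi}B],(B/\pi)[1])\cong B/\pi$, whereas $\Hom_{D(B)}\bigl(B/\pi\oplus(B/\pi)[1],(B/\pi)[1]\bigr)\cong\Hom_B(B/\pi,B/\pi)\oplus\Ext^1_B(B/\pi,B/\pi)\cong(B/\pi)^{\oplus 2}$; so the complex does not split. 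The splitting you obtain for $F=\OO_{X_m}$ is special: there the Koszul resolution resolves the object itself, so the $\OO_{X_m}$-module structure on the answer is unambiguous.

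The paper's proof avoids this trap by never expressing $L\iota_m^\ast\iotam_\ast F$ as a tensor product over $\OO_{X_m}$. It takes the adjunction counit $\eta:L\iota_m^\ast\iotam_\ast F\to F$ as the second map of the triangle, uses Lemma \ref{cohomology-lemma5} to see that $\HH^0(\eta)$ is an isomorphism so that the third vertex is $\HH^{-1}(L\iota_m^\ast\iotam_\ast F)[1]$, and then identifies $\HH^{-1}(L\iota_m^\ast\iotam_\ast F)$ with $F$ by exhibiting an explicit morphism, pushing it forward to $X_R$ (where the Koszul computation is legitimate), and descending via Lemma \ref{cohomology-lemma4}. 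Any repair of your argument must similarly produce an actual morphism between $F$ (or $F[1]$) and $L\iota_m^\ast\iotam_\ast F$; and the connecting map $F\to F[2]$ of the resulting triangle cannot be asserted to vanish.
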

\begin{proof}
We have the adjunction morphism $L\iotam^\ast \iotam_\ast F \overset{\eta}{\to} F$.  Define $K$ by the exact triangle
 \[
 K \to L\iota_m^\ast \iotam_\ast F \overset{\eta}{\longrightarrow} F \to K[1].
 \]
Taking the long exact sequence of cohomology with respect to $\Ap_m$, we get $\HH^0 (K)=0$ (because $\HH^0 (\eta)$ is an isomorphism by Lemma \ref{cohomology-lemma5}), $\HH^{-1}(K) \cong \HH^{-1} L{\iota^\ast_m} \iotam_\ast F$, and $\HH^i (K)=0$ for all $i \neq 0, -1$.  Therefore, $K \cong \HH^{-1}(K)[1]$ and it remains to show that $\HH^{-1} L{\iota^\ast_m} \iotam_\ast F \cong F$ in $\Ap_m$.  To see this, consider the canonical map $\al : (\HH^{-1} L\iota^\ast_m \iotam_\ast F)[1] = \tau_{\Ap_m}^{\leq -1} L\iota^\ast_m \iotam_\ast F  \to L\iota^\ast_m \iotam_\ast F$.  Let $\al'$ denote the composition
\[
  \iotam_\ast (\HH^{-1} L\iota^\ast_m \iotam_\ast F) [1] \overset{\iotam_\ast \al}{\longrightarrow} \iotam_\ast L\iota^\ast_m \iotam_\ast F \cong \iotam_\ast F \oplus \iotam_\ast F[1] \to \iotam_\ast F[1]
\]
where the last map is projection onto the second factor.  By Lemma \ref{cohomology-lemma4}, it suffices to show that $\al'$ is an isomorphism.  And it is enough to show that $H^i (\al')$ is an isomorphism for $i=-2,-1$.  That $H^{-2}(\al')$ is an isomorphism is easy to see.  On the other hand, $H^{-1}(\al)$ is the canonical injection of the torsion part of $H^{-1}L\iota^\ast_m \iotam_\ast F$ into $H^{-1}L\iota^\ast_m \iotam_\ast F$ itself, with respect to the torsion pair $(\Coh_{\leq 1}(X_m), \Coh_{\geq 2}(X_m))$ in $\Coh(X_m)$.  Hence  $H^{-1}(\al')$ is also an isomorphism.
\end{proof}

\begin{lemma}\label{lemma-cohomology6}
Let $1 \leq m' < m$, and let $A \in \Ap_{m'}$.  Then all the $\HH^i_{\Ap_{m'}} L\iota^\ast_{m,m'} {\iota_{m,m'}}_\ast A$ for odd $i < 0$ are isomorphic.
\end{lemma}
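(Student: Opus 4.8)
The plan is to transport the whole computation from $X_{m'}$ up to $X_m$, where the closed immersion ${\iota_{m,m'}}_\ast$ makes the situation transparent, and then to read off the cohomology from the periodic resolution of $\OO_{X_{m'}}$ over $\OO_{X_m}$ displayed above. Write $T := L\iota^\ast_{m,m'}{\iota_{m,m'}}_\ast A$ and $M := {\iota_{m,m'}}_\ast A$. Since ${\iota_{m,m'}}_\ast$ is exact and carries $\Ap_{m'}$ into $\Ap_m$ (it preserves the torsion pair $(\Coh_{\leq 1},\Coh_{\geq 2})$), it is $t$-exact for the t-structures associated with $\Ap_{m'}$ and $\Ap_m$; hence it commutes with the cohomology functors, ${\iota_{m,m'}}_\ast\HH^i_{\Ap_{m'}}(T)\cong\HH^i_{\Ap_m}({\iota_{m,m'}}_\ast T)$, and by Lemma \ref{cohomology-lemma4}(b) it is fully faithful on the hearts, so it reflects isomorphisms. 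It therefore suffices to prove that the objects $\HH^i_{\Ap_m}(N)$ are isomorphic for odd $i<0$, where $N := {\iota_{m,m'}}_\ast T$. By the projection formula, $N\cong M\Lo_{\OO_{X_m}}\OO_{X_{m'}}$, with $M\in\Ap_m$.

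First I would compute $N$ using the periodic locally free resolution $[\cdots\overset{\pi^{m'}}{\to}\OO_{X_m}\overset{\pi^{m-m'}}{\to}\OO_{X_m}\overset{\pi^{m'}}{\to}\OO_{X_m}]$ of $\OO_{X_{m'}}$ over $\OO_{X_m}$ recalled above. Tensoring with $M$ presents $N$ as the object attached to the complex $[\cdots\overset{\pi^{m'}}{\to}M\overset{\pi^{m-m'}}{\to}M\overset{\pi^{m'}}{\to}M]$, the differential out of the spot in degree $-j$ being multiplication by $\pi^{m'}$ for $j$ odd and by $\pi^{m-m'}$ for $j$ even. The key observation is that $M={\iota_{m,m'}}_\ast A$ is annihilated by $\pi^{m'}$ (it is a module over $\OO_{X_{m'}}=\OO_{X_m}/\pi^{m'}$), so multiplication by $\pi^{m'}$ is the \emph{zero} morphism $M\to M$ in $D(X_m)$, in particular in the heart $\Ap_m$.

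To convert this into a statement about $\HH^i_{\Ap_m}(N)$ I would run the hypercohomology spectral sequence of the stupid filtration of the resolution. Because $M$ lies in the heart $\Ap_m$, the $E_1$-page is concentrated in the single row $q=0$, where it is exactly the complex $[\cdots\overset{0}{\to}M\overset{\pi^{m-m'}}{\to}M\overset{0}{\to}M]$ (the induced $d_1$ on $\HH^0_{\Ap_m}(M)=M$ being multiplication by $\pi^{m'}=0$, resp.\ by $\pi^{m-m'}$); being concentrated in one row it degenerates at $E_2$, and in each total degree only one term contributes, so there are no convergence issues in the unbounded category $D(X_m)$. Hence $\HH^i_{\Ap_m}(N)$ is the $i$-th cohomology, computed inside the abelian category $\Ap_m$, of that periodic complex. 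Now the complex is $2$-periodic with every odd-degree spot carrying incoming differential $\pi^{m-m'}$ and outgoing differential $0$; therefore for every odd $i<0$ one gets $\HH^i_{\Ap_m}(N)\cong\cokernel_{\Ap_m}(\pi^{m-m'}\colon M\to M)$, a single object independent of $i$. This proves the lemma.

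The step I expect to be the main obstacle is the identification in the previous paragraph: one must be sure that the $\Ap_m$-cohomology of the derived tensor product $M\Lo_{\OO_{X_m}}\OO_{X_{m'}}$ really agrees with the cohomology, taken in the heart $\Ap_m$, of the periodic complex of copies of $M$. This is where it is essential that $M$ lie in the heart (so that the spectral sequence has a single nonzero row and degenerates) and that $\HH^0_{\Ap_m}$ send multiplication by $\pi^{m'}$ to the zero morphism; one should also verify convergence in the unbounded derived category, which holds because the complex is bounded above and each total degree receives only finitely many contributions.
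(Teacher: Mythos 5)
Your argument is correct and follows essentially the same route as the paper's: push everything forward along $\iota_{m,m'}$, use the $2$-periodic free resolution of $\OO_{X_{m'}}$ over $\OO_{X_m}$ together with the fact that $\pi^{m'}$ annihilates ${\iota_{m,m'}}_\ast A$, and descend via the $t$-exactness and full faithfulness of ${\iota_{m,m'}}_\ast$ on the hearts (Lemma \ref{cohomology-lemma4}). The only difference is packaging: where you invoke the hypercohomology spectral sequence of the stupid filtration, the paper exploits the resulting direct-sum splitting of the periodic complex and verifies directly, on standard cohomology, that the truncations $\tau^{\leq i}$ map isomorphically onto the corresponding summands.
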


\begin{proof}
For brevity, let us write $f$ to denote $\iota_{m,m'} : X_{m'} \hookrightarrow X_m$ just in this proof.  Then
\begin{align*}
 f_\ast Lf^\ast f_\ast A &\cong f_\ast A \Lo [ \cdots \to \OO_{X_m} \overset{\pi^{m'}}{\longrightarrow} \OO_{X_m} \overset{\pi^{m-m'}}{\longrightarrow} \OO_{X_m} \overset{\pi^{m'}}{\longrightarrow} \OO_{X_m} ] \\
&\cong f_\ast A \oplus \left( \bigoplus_{n<0, n \, \textnormal{odd}} [f_\ast A \overset{\pi^{m-m'}}{\longrightarrow} f_\ast A][n]\right).
\end{align*}

For any odd integer $i<0$, let $\al : \tau^{\leq i} Lf^\ast f_\ast A \to Lf^\ast f_\ast A$ denote the canonical map, and let $\al'$ denote the composition
\[
  f_\ast \tau^{\leq i} Lf^\ast f_\ast A \overset{f_\ast \al}{\longrightarrow} f_\ast Lf^\ast f_\ast A \overset{p}{\to} f_\ast T_i
\]
where the second map $p$ is projection onto $f_\ast T_i$, where $T_i := \bigoplus_{n \leq i, n\, \text{odd}} [A \overset{\pi^{m-m'}}{\longrightarrow} A][n]$, which is an object in $D^{\leq i}_{\Ap_{m'}}$.  If we can show that $\al'$ is an isomorphism, then by the $t$-exactness of $f_\ast$ and Lemma \ref{cohomology-lemma4}, we would have $\HH^n (\tau^{\leq i} Lf^\ast f_\ast A) \cong \HH^n (T_i)$ for all odd integers $n \leq i$, and then the lemma follows from the 2-periodicity of $T_i$.  To show that $\al'$ is an isomorphism, it is enough to demonstrate that $H^s(\al')$ is an isomorphism for every integer $s$.  Note that, for $s>i$, both $H^s(\tau^{\leq i} Lf^\ast f_\ast A)$ and $H^s(T_i)$ are zero.

At degree $i$, let $h_t, h_f$ denote the torsion and torsion-free parts of $H^i (Lf^\ast f_\ast A)$ with respect to the torsion pair $(\Coh_{\leq 1}, \Coh_{\geq 2})$ in $\Coh (X_{m'})$, respectively.  Then $H^i(f_\ast \al)=f_\ast H^i(\al)$ is the canonical injection of $f_\ast h_t$ into $H^i(f_\ast Lf^\ast f_\ast A)$, while $H^i(p)$ is the canonical projection onto $h_t$, and so $H^i(\al')$ is an isomorphism.  For $s<i$, both $H^s(f_\ast \al)$ and $H^s(p)$ are isomorphisms.  Therefore, $\al'$ is an isomorphism in the derived category.
\end{proof}

\subsection{Flat Families in $D^b(X_m)$ are in $\Ap_m$}

 We will now establish that, if $I$ is any flat family of objects in $\Ap$ over $\Spec R$, then for any $m\geq 1$ we have $L\iota_m^\ast I \in \Ap_m$.  In fact, we prove something slightly more general:

\begin{pro}\label{XkflatXmflat}
Given an object $E \in D^{\leq 0}_{\Coh (X_m)}(X_m)$, if $L\iota_{m,1}^\ast E \in \Ap (X_k)$ then $E \in \Ap_m$.
\end{pro}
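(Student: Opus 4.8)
The plan is to argue by induction on $m$. The base case $m=1$ is immediate: $\iota_{1,1}$ is the identity, so the hypothesis reads $E = L\iota_{1,1}^\ast E \in \Ap(X_k) = \Ap_1$. For the inductive step, fix $m \geq 2$ and set $E' := L\iota_{m,m-1}^\ast E \in D(X_{m-1})$. Since derived pullback is right t-exact for the standard t-structures, the hypothesis $E \in D^{\leq 0}_{\Coh(X_m)}(X_m)$ forces $E' \in D^{\leq 0}_{\Coh(X_{m-1})}(X_{m-1})$; moreover, the factorisation $\iota_{m,1} = \iota_{m,m-1}\circ\iota_{m-1,1}$ gives $L\iota_{m-1,1}^\ast E' \cong L\iota_{m,1}^\ast E \in \Ap(X_k)$. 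Hence the inductive hypothesis applies to $E'$ on $X_{m-1}$ and yields $L\iota_{m,m-1}^\ast E = E' \in \Ap_{m-1}$.

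The key step is then to realise $E$ as the middle term of a distinguished triangle whose two outer terms are visibly objects of the heart $\Ap_m$. For $m\geq 2$ the ideal sheaf of $X_{m-1}$ inside $X_m$ is $\pi^{m-1}\OO_{X_m}\cong \OO_{X_1}$, which gives a short exact sequence of $\OO_{X_m}$-modules
\[
0 \to \OO_{X_1} \overset{\pi^{m-1}}{\to} \OO_{X_m} \to \OO_{X_{m-1}} \to 0.
\]
Applying $E \Lo_{\OO_{X_m}} (-)$ and invoking the projection formula, which identifies $E\Lo_{\OO_{X_m}}\OO_{X_1} \cong {\iota_{m,1}}_\ast L\iota_{m,1}^\ast E$ and $E\Lo_{\OO_{X_m}}\OO_{X_{m-1}}\cong {\iota_{m,m-1}}_\ast L\iota_{m,m-1}^\ast E$, I obtain the distinguished triangle in $D(X_m)$
\[
{\iota_{m,1}}_\ast L\iota_{m,1}^\ast E \to E \to {\iota_{m,m-1}}_\ast L\iota_{m,m-1}^\ast E \to ({\iota_{m,1}}_\ast L\iota_{m,1}^\ast E)[1].
\]

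Now both outer terms lie in $\Ap_m$: the left one is ${\iota_{m,1}}_\ast$ applied to $L\iota_{m,1}^\ast E \in \Ap_1$, the right one is ${\iota_{m,m-1}}_\ast$ applied to $E' \in \Ap_{m-1}$, and each pushforward ${\iota_{m,m'}}_\ast$ is exact and so carries its heart into $\Ap_m$, as noted in the Notation. Passing to the long exact sequence of $\Ap_m$-cohomology of the triangle and using that $\HH^i_{\Ap_m}$ of each outer object vanishes for $i \neq 0$, I conclude that $\HH^i_{\Ap_m}(E)=0$ for every $i \neq 0$, while $\HH^0_{\Ap_m}(E)$ sits in a short exact sequence in $\Ap_m$ exhibiting it as an extension of the right term by the left term. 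Therefore $E \cong \HH^0_{\Ap_m}(E) \in \Ap_m$, which closes the induction.

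The only delicate point is the identification of the two outer terms of the triangle as objects of $\Ap_m$ rather than merely of $D^b(X_m)$, since this is precisely what makes the cohomology long exact sequence collapse; it hinges on the projection-formula identities above, on the exactness of the pushforwards, and on the inductive fact $L\iota_{m,m-1}^\ast E \in \Ap_{m-1}$. I would emphasise that the standing hypothesis $E \in D^{\leq 0}_{\Coh(X_m)}$ enters only to propagate the bounded-above condition to $E'$ so that the induction applies on $X_{m-1}$; the collapse of the long exact sequence needs no further input. A pleasant feature of this route is that it sidesteps any direct analysis of the higher Tor-sheaves of $L\iota_{m,1}^\ast$, which would otherwise be the main source of difficulty.
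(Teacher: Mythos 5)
Your argument is correct, and it takes a genuinely different route from the paper's. The paper proves Proposition \ref{XkflatXmflat} by direct local commutative algebra: Lemmas \ref{flatness-lemma1} and \ref{flatness-lemma2} compare $\dimension H^0(E)$ with $\dimension H^0(E\otimes_B R/\pi)$ via annihilator ideals, and Lemma \ref{flatness-lemma3} shows that a low-dimensional submodule of $H^{-1}(E)$ descends to one of $H^{-1}(E\otimes_B B/\pi^{m-1})$, so that purity of $H^{-1}$ on the reduced fibre propagates upward step by step. You replace all of this by the d\'evissage coming from the short exact sequence $0 \to \OO_{X_1} \to \OO_{X_m} \to \OO_{X_{m-1}} \to 0$, the projection formula identifying the outer terms of the resulting triangle with ${\iota_{m,1}}_\ast L\iota_{m,1}^\ast E$ and ${\iota_{m,m-1}}_\ast L\iota_{m,m-1}^\ast E$, the extension-closedness of the heart, and an induction on $m$. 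The inputs you need are only the right t-exactness of derived pullback for the \emph{standard} t-structures and the fact, recorded in the paper's notation section, that the exact pushforwards ${\iota_{m,m'}}_\ast$ carry $\Ap_{m'}$ into $\Ap_m$; in particular there is no circularity with Lemma \ref{cohomology-lemma0}, whose proof does rely on the flatness lemmas. Your route is shorter, more structural, and works verbatim for any tilted heart preserved by these pushforwards. What it does not deliver are the quantitative Lemmas \ref{flatness-lemma1}--\ref{flatness-lemma3} themselves, which the paper reuses elsewhere (notably in the proofs of Lemma \ref{cohomology-lemma0} and Lemma \ref{cohomology-lemma3}), so the commutative algebra cannot simply be excised from the paper even though your proof of this particular proposition bypasses it.
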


This proposition will follow from the next three lemmas.

\vspace{5pt}

  For the next three lemmas, let us use the following notation: let $A$ be a finitely generated $k$-algebra that is an integral domain, and let $R$ be a DVR with uniformiser $\pi$.  Fix some positive integer $m>1$, and let $B := A \otimes_k R/(\pi^m)$.

Recall that the Krull dimension of a $B$-module $M$ is defined as $\dimension (B/\mbox{ann}(M))$, and this is how the dimension of a coherent sheaf is calculated locally.

\begin{lemma}\label{flatness-lemma1}
Let $$E := [\cdots \to E^{-1} \overset{\phi}{\to} E^0 \to 0]$$ be a chain complex (not necessarily bounded from below) of finite-rank free $B$-modules, such that $\dimension H^0(E\otimes_B R/\pi) < \dimension A$.  Then as $B$-modules,
\[
  \dimension H^0(E) \leq \dimension H^0(E\otimes_B R/\pi).
\]
\end{lemma}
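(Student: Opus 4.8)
The plan is to reduce the statement to a fact about the single module $M := H^0(E)$ and then exploit that $\pi$ is nilpotent in $B$. First I would note that, since $E$ is concentrated in degrees $\leq 0$, its top cohomology is simply $M = \cokernel(E^{-1} \overset{\phi}{\to} E^0)$, which is a finitely generated module over the Noetherian ring $B = A \otimes_k R/(\pi^m)$. Because reduction modulo $\pi$ is right exact and $E^0$ sits in the top degree, applying $-\otimes_B R/\pi$ to the presentation $E^{-1} \to E^0 \to M \to 0$ gives
\[
  H^0(E \otimes_B R/\pi) \cong M/\pi M .
\]
Thus the inequality to be proved is exactly $\dimension M \leq \dimension (M/\pi M)$, where both are Krull dimensions of $B$-modules, i.e.\ dimensions of the supports $\Supp M = V(\mbox{ann}\, M)$ and $\Supp (M/\pi M)$.

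The key point is that $\pi^m = 0$ in $R/(\pi^m)$, hence $\pi$ is nilpotent in $B$. I claim this already forces $\Supp M = \Supp (M/\pi M)$. The inclusion $\Supp (M/\pi M) \subseteq \Supp M$ is automatic. Conversely, if $\pp$ is a prime with $M_\pp \neq 0$ but $(M/\pi M)_\pp = 0$, then $M_\pp = \pi M_\pp$, and iterating gives $M_\pp = \pi^m M_\pp = 0$, a contradiction; so every point of $\Supp M$ lies in $\Supp (M/\pi M)$. Taking dimensions of supports then yields the stronger statement $\dimension M = \dimension (M/\pi M)$, which of course implies the asserted inequality.

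I do not expect a serious obstacle here: once one observes that $\pi$ is nilpotent in $B$ (equivalently, that $\Spec B$ and $\Spec (A \otimes_k R/\pi)$ have the same underlying topological space, since $\pi$ lies in the nilradical), the support comparison is immediate. The only mild care required is the bookkeeping identifying $H^0(E \otimes_B R/\pi)$ with $M/\pi M$, together with the observation that dimension as a $B$-module is read off from the support and is therefore insensitive to the nilpotents of $B$. In particular, the hypothesis $\dimension H^0(E \otimes_B R/\pi) < \dimension A$ is not actually needed for this inequality, and the whole content of the lemma is the contrast with the situation over $X_R$: there $\pi$ is a nonzerodivisor and cutting by $\pi$ drops dimension by one, whereas over the non-reduced base $X_m$ this drop does not occur.
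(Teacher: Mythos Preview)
Your proof is correct and takes a genuinely different, more conceptual route than the paper. The paper works directly with annihilators: it picks a generator $g$ of $\mbox{ann}\, H^0(E\otimes_B R/\pi)$ with $g \not\equiv 0 \pmod \pi$ (this is exactly where the hypothesis $\dimension H^0(E\otimes_B R/\pi) < \dimension A$ enters), inverts $g$, and shows explicitly via the free presentation that the localisation $H^0(E)_g$ vanishes, so some power $g^q$ lies in $\mbox{ann}\, H^0(E)$; comparing the two annihilator ideals then gives the dimension inequality. You instead identify $H^0(E\otimes_B R/\pi)$ with $M/\pi M$ by right exactness of reduction, and then observe that nilpotence of $\pi$ in $B$ forces $\Supp M = \Supp(M/\pi M)$ via the one-line Nakayama-type argument $M_\pp = \pi M_\pp \Rightarrow M_\pp = \pi^m M_\pp = 0$.

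Your argument buys you more for less: it yields the \emph{equality} of dimensions and dispenses with the hypothesis $\dimension H^0(E\otimes_B R/\pi) < \dimension A$ altogether, and it makes transparent the underlying reason (the closed immersion $\Spec(B/\pi) \hookrightarrow \Spec B$ is a homeomorphism). The paper's approach is more computational and, as you note, needs the extra hypothesis just to get off the ground; its only advantage is that the same hands-on style with explicit matrix manipulations is reused in the neighbouring Lemma~\ref{flatness-lemma3}, where a simple support comparison is no longer available.
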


On the derived category level, this lemma implies that, if $E$ is a flat family of objects in $\Ap$ over $\Spec R/\pi^m$ (so that $H^0(E \otimes_B R/\pi) \in \Coh_{\leq 1}(X_k)$), then $H^0(E) \in \Coh_{\leq 1}(X_m)$.

\begin{proof}
We can regard $H^0(E)= \frac{E^0}{\image \phi}$ and $H^0(E \otimes_B R/\pi)= \frac{E^0\otimes_B R/\pi}{\image (\phi \otimes_B R/\pi)}$ as $B$-modules.  Take any nonzero $g$ in $\mbox{ann}(H^0(E \otimes_B R/\pi))$, which is defined as the quotient ideal
$$(\mbox{im} (\phi\otimes_B R/\pi) : E^0\otimes_B R/\pi):= \{ x \in B : x(E^0 \otimes_B R/\pi ) \subseteq \mbox{im}(\phi \otimes_B R/\pi) \}.$$
Since $\dimension H^0(E\otimes_B R/\pi) < \dimension A$, we have $\mbox{ann}(H^0(E \otimes_B R/\pi)) \nsubseteq (\pi)$, and so we can find some $g$ in $\mbox{ann}(H^0(E\otimes_B R/\pi))$ such that $g \nequiv 0$ mod $\pi$.  We claim that there is an integer $q$ such that $g^q \in \mbox{ann}(H^0(E))$.

Since $E^0$ is a free $B$-module of rank $n$, we can fix a $B$-module isomorphism $E^0 \cong B^{\oplus (\mbox{rank}\, E^0)}$, and let $e_i = (0,\cdots,1,\cdots,0)$ be the standard basis element with $1$ in the $i$-th summand.  Under this isomorphism, for any $i$ we have $ge_i \in \image \phi$ mod $\pi$.  If we also fix an isomorphism $E^{-1} \cong B^{\oplus (\mbox{rank}\, E^{-1})}$, we can associate to $\phi$ a matrix $T$ over $B$.  Then there exists $a_i \in E^{-1}$ such that $Ta_i = g e_i$ mod $\pi$.

Now we invert $g$ (If $\nu$ is the valuation on $K$ associated to $R$, then $\nu (g)=0$, and so $g$ is not nilpotent.  So inverting  $g$ does not kill all of $B$.)   Then we have that $g^{-1}Ta_i = e_i$ mod $\pi$ for each $i$, and so for each $i$, we have $e_i = g^{-1}Ta_i$ mod $\pi$, so $e_i = g^{-1}Ta_i + \pi b$ in $E^0$ for some $b$.

Therefore, after inverting $g$, each $e_i$ lies in $\image \phi$ (using the equations we just obtained, the part that is a multiple of $\pi$ can be written as the sum of something else in $\image \phi$ plus a multiple of $\pi^2$, etc.).  That is, the localisation $(H^0(E))_{g}= 0$, meaning there is some positive integer $q$ such that $g^q H^0(E^0)=0$, i.e.\ $g^q \in \mbox{ann} (H^0(E))$.

Now suppose $\mbox{ann}(H^0(E\otimes_B R/\pi))$ is generated by $\pi, g_1,\cdots, g_l \in B$ where $g_j \neq 0$ mod $\pi$ for all $1 \leq j \leq l$, and that $q_j$ are positive integers such that $g_j^{q_j} \in \mbox{ann} (H^0(E))$.  Then
\begin{align*}
  \dimension H^0(E\otimes_B R/\pi) &= \dimension B/\mbox{ann} H^0(E \otimes_B R/\pi) \\
&= \dimension B/(g_1,\cdots, g_l) \\
&= \dimension B/( g_1^{q_1} ,\cdots, g_l^{q_l} ) \\
&\geq \dimension B/\mbox{ann}(H^0(E)) \\
&= \dimension H^0(E)
\end{align*}
as wanted.
\end{proof}

\begin{lemma}\label{flatness-lemma2}
If $E \in D^-(X_m)$ is such that $H^i(E)=0$ for all $i>0$, then $\dimension H^0(E) \geq \dimension H^0(E\otimes_B R/\pi)$.
\end{lemma}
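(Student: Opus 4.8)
The plan is to reduce the statement to the elementary observation that $H^0(E\otimes_B R/\pi)$ is nothing but the quotient $H^0(E)/\pi H^0(E)$; the desired inequality then follows at once, because the support of a quotient module is contained in the support of the module itself.

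First I would record that the hypothesis $H^i(E)=0$ for all $i>0$ places $E$ in $D^{\leq 0}$ for the standard t-structure, and I would work on an affine chart $\Spec B$ with $B=A\otimes_k R/(\pi^m)$, so that $H^0(E)$ is a finitely generated $B$-module whose Krull dimension computes the dimension of its support, as recalled just before Lemma~\ref{flatness-lemma1}. Observe also that $R/\pi\cong B/\pi B$ as a $B$-module, so that reducing modulo $\pi$ is the same as tensoring with $B/\pi B$.

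The key identification is $H^0(E\otimes_B R/\pi)\cong H^0(E)/\pi H^0(E)$. To see this I would apply $-\Lo_B R/\pi$ to the standard truncation triangle $\tau^{\leq -1}E \to E \to H^0(E)\to (\tau^{\leq -1}E)[1]$. The term $(\tau^{\leq -1}E)\Lo_B R/\pi$ still lies in $D^{\leq -1}$ (representing $\tau^{\leq -1}E$ by free modules in degrees $\leq -1$ and tensoring can only push cohomology to more negative degrees), so its $H^0$ and $H^1$ both vanish; the long exact sequence then forces $H^0$ of the derived reduction of $E$ to agree with $H^0$ of the derived reduction of the module $H^0(E)$. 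The latter is precisely the ordinary tensor product $H^0(E)\otimes_B R/\pi=H^0(E)/\pi H^0(E)$, by right-exactness. Equivalently, representing $E$ by a complex of free $B$-modules in degrees $\leq 0$ and applying right-exactness of $-\otimes_B R/\pi$ to $H^0(E)=\cokernel(E^{-1}\to E^0)$ gives the same conclusion directly.

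Finally, since $H^0(E)/\pi H^0(E)$ is a quotient of the finitely generated module $H^0(E)$, we have $\Supp(H^0(E)/\pi H^0(E))\subseteq \Supp(H^0(E))$, and hence $\dimension H^0(E\otimes_B R/\pi)=\dimension\bigl(H^0(E)/\pi H^0(E)\bigr)\leq \dimension H^0(E)$. The only point demanding care is this degree-zero identification, namely ensuring that no higher $\mathrm{Tor}$ contributes to $H^0$ of the derived reduction; but that is exactly what the placement of $E$ in $D^{\leq 0}$ guarantees. Beyond it the argument is the trivial monotonicity of support under passage to a quotient, so I expect no genuine obstacle.
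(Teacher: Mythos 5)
Your proof is correct and follows essentially the same route as the paper: the paper trims $E$ to a free complex in degrees $\leq 0$, writes $H^0(E)=E^0/\image\phi$ and $H^0(E\otimes_B R/\pi)=(E^0\otimes_B R/\pi)/\image(\phi\otimes_B R/\pi)$, and concludes via the containment of annihilators — which is just your observation that $H^0(E\otimes_B R/\pi)\cong H^0(E)/\pi H^0(E)$ is a quotient of $H^0(E)$. The ``equivalent'' direct argument you mention at the end of your third paragraph is precisely the paper's proof.
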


\begin{proof}
Since $E$ has zero cohomology at all positive degrees $i$, we can trim $E$ to a complex of the form $[\cdots \to E^{-1} \overset{\phi}{\to} E^0 \to 0 \cdots]$ where each $E^i$ is at degree $i$ and is locally free.  We have $H^0(E) = \frac{E^0}{\image \phi}$ and $H^0(E\otimes_B R/\pi) = \frac{E^0\otimes_B R/\pi}{\image (\phi\otimes_B R/\pi)}$,
from which we see $\mbox{ann}\, H^0(E) \subseteq \mbox{ann} (H^0(E\otimes_B R/\pi))$, and so $\dimension H^0(E) \geq \dimension H^0(E\otimes_B R/\pi)$ as wanted.
\end{proof}

Lemmas \ref{flatness-lemma1} and \ref{flatness-lemma2} together imply: given any $E \in D^-(X_m)$ with $H^i(E)=0$ for $i>0$, and such that $\dimension H^0(E\otimes_B R/\pi)< \dimension A$, we have $\dimension H^0(E) = \dimension H^0(E\otimes_B R/\pi)$ as $B$-modules.  Hence $H^0(E) \in \Coh_{\leq 1}(X_m)$ iff $H^0(E\otimes_B R/\pi) \in\Coh_{\leq 1}(X_k)$.  Consequently, if $I \in D^b(X_R)$ is a flat family of objects in $\Ap$ over $\Spec R$, then $H^0(L\iota_m^\ast I) \in \Coh_{\leq 1}(X_m)$ for each $m \geq 1$.

\vspace{5pt}

The next lemma will imply that, given a flat family $I \in D^b(X_R)$ of objects in $\Ap$ over $\Spec R$, we will have $H^{-1}(L\iota_m^\ast I) \in \Coh_{\geq 2}(X_m)$ for each $m \geq 1$.  Or, even, given a flat family $I \in D^b(X_m)$ of objects in $\Ap$ over $\Spec R/\pi^m$, we will have $H^{-1}(L\iota_{m,m'}^\ast I) \in \Coh_{\geq 2}(X_{m'})$ for any $1 \leq m' < m$.

\begin{lemma}\label{flatness-lemma3}
Let \[E = [\cdots \to E^{-3} \overset{\nu}{\to} E^{-2} \overset{\psi}{\to} E^{-1} \overset{\phi}{\to} E^0 \to 0 \to \cdots] \in D^-(X_m)\] be a chain complex of finite-rank free $B$-modules such that $H^{-2}(E\otimes_B R/\pi)=0$ (and hence $H^{-2}(E)=0$).  Then if $H^{-1}(E)$ has a nonzero submodule of dimension $d < \dimension A$, then $H^{-1}(E\otimes_B B/\pi^{m-1})$ also has a nonzero submodule of dimension $\leq d$.
\end{lemma}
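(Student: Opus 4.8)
The plan is to compare $H^{-1}(E)$ with $H^{-1}(E')$, where $E' := E\otimes_B B/\pi^{m-1}$, by means of two short exact sequences of complexes, using the vanishing $H^{-2}(E\otimes_B R/\pi)=0$ to force one of the induced maps to be injective. Write $\bar E := E\otimes_B R/\pi = E/\pi E$ for the central-fibre complex. Since each $E^i$ is a free, hence flat, $B$-module, the complexes $E'$ and $\bar E$ are just the termwise reductions $E/\pi^{m-1}E$ and $E/\pi E$, and their cohomology is exactly the cohomology the lemma refers to.

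First I would record two short exact sequences of complexes of $B$-modules. Freeness of the $E^i$ means that multiplication by $\pi$ identifies $E^i/\pi^{m-1}E^i$ with $\pi E^i \subseteq E^i$, while multiplication by $\pi^{m-1}$ identifies $E^i/\pi E^i$ with $\pi^{m-1}E^i \subseteq E^i$; assembling these over all $i$ gives
\[
0 \to E' \overset{\pi}{\to} E \overset{q'}{\to} \bar E \to 0 \quad\text{and}\quad 0 \to \bar E \overset{\pi^{m-1}}{\to} E \overset{q}{\to} E' \to 0,
\]
with $q,q'$ the reduction maps. The long exact sequence of the second yields the reduction map $r := H^{-1}(q) : H^{-1}(E) \to H^{-1}(E')$ with $\kernel r = \image\big(H^{-1}(\bar E)\overset{\pi^{m-1}}{\to} H^{-1}(E)\big)$, and the long exact sequence of the first yields a map $\pi_\ast : H^{-1}(E') \to H^{-1}(E)$. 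I would then establish two facts. (i) $\pi_\ast$ is injective: the relevant segment of the first long exact sequence is $H^{-2}(\bar E) \to H^{-1}(E') \overset{\pi_\ast}{\to} H^{-1}(E)$, so $\kernel \pi_\ast$ is a quotient of $H^{-2}(\bar E)=H^{-2}(E\otimes_B R/\pi)$, which is zero by hypothesis. (ii) $\kernel r \subseteq \image \pi_\ast$: since $m>1$, the complex map $\bar E \overset{\pi^{m-1}}{\to} E$ factors as $\bar E \overset{\pi^{m-2}}{\to} E' \overset{\pi}{\to} E$ (multiplication by $\pi^{m-2}$ being well defined from $E/\pi E$ to $E/\pi^{m-1}E$), so on $H^{-1}$ its image lands inside $\image \pi_\ast$.

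With (i) and (ii) in place, the proof is a dichotomy applied to the given nonzero submodule $N \subseteq H^{-1}(E)$ of dimension $d$. If $r(N)\neq 0$, then $r(N)$ is a nonzero $B$-submodule of $H^{-1}(E')$ which, being a quotient of $N$, satisfies $\dimension r(N) \leq d$. If instead $r(N)=0$, then $N \subseteq \kernel r \subseteq \image \pi_\ast$, and since $\pi_\ast$ is injective, $\pi_\ast^{-1}(N)$ is a submodule of $H^{-1}(E')$ isomorphic to $N$, hence nonzero of dimension $d$. In either case $H^{-1}(E\otimes_B B/\pi^{m-1})$ has a nonzero submodule of dimension $\leq d$, as required.

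The step I expect to require the most care is the bookkeeping of the two short exact sequences: checking that the various powers of $\pi$ induce well-defined maps of complexes with exactly the stated kernels and images (this is precisely where freeness of the $E^i$ enters), and isolating the single place — the injectivity of $\pi_\ast$ — where the hypothesis $H^{-2}(E\otimes_B R/\pi)=0$ is used. Once these are settled the dimension count is purely formal; note in particular that it nowhere uses the inequality $d<\dimension A$, which is needed only to make the conclusion bear on the torsion pair $(\Coh_{\leq 1},\Coh_{\geq 2})$ in the intended application.
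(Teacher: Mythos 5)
Your proof is correct, and it takes a genuinely different route from the paper's. The paper argues entirely with elements and annihilators: it fixes generators $g_1,\dots,g_r$ of the given submodule $T$, splits into cases according to whether the reduction $\theta(T)$ vanishes, and in the vanishing case writes $g_i=\pi h_i$, checks by hand that the classes $\theta(h_i)$ live in $H^{-1}(E\otimes_B B/\pi^{m-1})$, and proves $\textnormal{ann}(\pi h_i)\subseteq \textnormal{ann}(\theta(h_i))$ by a matrix computation whose key step is exactly the exactness $\kernel(\psi\otimes_B R/\pi)=\image(\nu\otimes_B R/\pi)$, i.e.\ the hypothesis $H^{-2}(E\otimes_B R/\pi)=0$. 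Your argument packages the same two mechanisms (reduction mod $\pi^{m-1}$, and ``division by $\pi$'') into the two short exact sequences of complexes $0\to E'\overset{\pi}{\to}E\to \bar E\to 0$ and $0\to \bar E\overset{\pi^{m-1}}{\to}E\to E'\to 0$, both of which are termwise exact precisely because the $E^i$ are free, and then runs the dichotomy on $r(N)$ at the level of the long exact sequences. This is shorter, avoids choosing generators and matrices, isolates cleanly the single use of $H^{-2}(E\otimes_B R/\pi)=0$ (injectivity of $\pi_\ast$), and in the case $r(N)=0$ even yields a submodule isomorphic to $N$, hence of dimension exactly $d$ rather than merely $\leq d$. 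What the paper's computation buys in exchange is an explicit description of the resulting submodule in terms of the generators, consistent in style with the element-level arguments of the neighbouring Lemmas \ref{flatness-lemma1} and \ref{flatness-lemma2}. Your closing observation that $d<\dimension A$ is never used in the proof itself also matches the paper, where that inequality likewise plays no role in the argument and matters only for the intended application to the torsion pair.
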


\noindent\textit{Remark.} That the vanishing of $H^{-1}(E\otimes_B k)$ implies the vanishing of $H^{-1}(E)$ itself follows from \cite[Lemma 2.1.4]{Lieblich}.

\begin{proof}
Suppose $0 \neq T \subseteq H^{-1}(E)$ is a $d$-dimensional $B$-submodule, where $d<\dimension A$.  We want to produce a nonzero submodule $T' \subseteq H^{-1}(E \otimes_B B/\pi^{m-1})$ of dimension at most $d$.

Let $\theta$ denote the map
\[
\theta : H^{-1}(E) = \frac{\kernel \phi}{\image \psi} \to \frac{\kernel (\phi\otimes_B B/\pi^{m-1})}{\image (\psi \otimes_B B/\pi^{m-1})} = H^{-1}(E \otimes B/\pi^{m-1}).
\]
Suppose $g_1, \cdots, g_r$ in $\kernel \phi$ generate $T$ as a $B$-module.  Let $\bar{g_i}$ denote the image of $g_i$ in $H^{-1}(E)$.

Observe that, for the $B$-modules $T=(\bar{g_1},\cdots,\bar{g_r})$ and $\theta(T)=(\theta (\bar{g_1}),\cdots,\theta (\bar{g_r}))$, we have $\mbox{ann}(T) \subseteq \mbox{ann} (\theta (T))$. Therefore,
\[
\dimension T = \dimension B/\mbox{ann}(T) \geq \dimension B/\mbox{ann}(\theta(T)) = \dimension \theta (T).
\]
And so if $\theta (T)$ is nonzero, it would be a $B$-submodule of $H^{-1}(E \otimes B/\pi^{m-1})$ of dimension $\leq d$.

In the event that $\theta (T)=0$, we produce a somewhat different nonzero $B$-submodule of $H^{-1}(E \otimes B/\pi^{m-1})$ of dimension $\leq d$, as follows.  Assume from now on that $\theta (T)=0$.  This means $g_i \in (\image \psi) + \pi^{m-1}E^{-1}$ for all $i$.  So we might as well assume that each $g_i$ is a multiple of $\pi$ (since the $g_i$ generate a nonzero submodule in $H^{-1}(E)$, we can replace them by their residue modulo $\image \psi$), say $g_i=\pi h_i$, and that $g_i \notin \image \psi$.

Also, $\theta(h_i) \in (\kernel \phi\otimes_B B/\pi^{m-1})$ for all $i$.  This is because $g_i=\pi h_i \in \kernel \phi$ means $0=\phi (\pi h_i)=\pi \phi (h_i)$ in $E^0$, and so $\phi (h_i) \in \pi^{m-1}E^{-1}$.  It follows that $\theta(h_i) \in \kernel (\phi\otimes_B B/\pi^{m-1})$, so $T':=(\theta(h_1),\cdots,\theta(h_r))$ can be considered as a submodule of $H^{-1}(E\otimes B/\pi^{m-1})$.

Next, notice that $T'$ is a nonzero submodule of $H^{-1}(E\otimes_B B/(\pi^{m-1}))$: if $\theta (h_i)=0$ for all $i$, that means that $h_i$ represents an element in $(\image \psi\otimes_B B/\pi^{m-1}) \subset E^{-1} \otimes_B B/\pi^{m-1}$ modulo $\pi^{m-1}E^{-1}$, and so $g_i=\pi h_i \in \image \psi$, contradicting our assumption.

Fix an $i$.  Considering each principal $B$-module $(\pi h_i)$ as a $B$-submodule of $H^{-1}(E)$ and $(\theta(h_i))$ as a $B$-submodule of $H^{-1}(E \otimes B/\pi^{m-1})$, we want to show that $\mbox{ann}(\pi h_i) \subseteq \mbox{ann} (\theta (h_i))$.  To achieve this, take any $g \in \mbox{ann}(\pi h_i)$.  Fix an isomorphism $E^{-2} \cong B^{\oplus (\mbox{rank}\, E^{-2})}$, and let $S$ be the matrix over $B$ associated to the differential $\psi$.  Write $S$ as $S=\oplus_{j=0}^{m-1} \pi^j S_j$, where the $S_j$ are matrices over $A$.

Since $g$ annihilates $\pi h_i$, it means that $g\pi h_i \in \image \psi$, say $g\pi h_i = Sf$ for some $f \in E^{-2}$.  Then $Sf=0$ mod $\pi$.  If we can show that $g \pi h_i = \pi S v'$ for some $v' \in E^{-2}$, then we have that $g h_i  \in \image (\psi\otimes_B B/\pi^{m-1})$, and $g \in\mbox{ann}(\theta (h_i))$ would follow.

Now, that $Sf=0$ mod $\pi$ means $f \in \kernel (\psi \otimes_B R/\pi) = \image (\nu \otimes_B R/\pi)$.  And so $f = Uv$ mod $\pi$ for some $v \in E^{-3}$, i.e.\ $f = Uv + \pi b$ for some $b \in E^{-2}$.  Then $g\pi h_i = Sf = SUv + \pi Sb = 0 + \pi Sb$,
  and so $Sb = gh_i$.  So $g$ kills $\theta (h_i)$ mod $\pi^{m-1}$.  So we have shown that $\mbox{ann}(\pi h_i) \subseteq \mbox{ann}(\theta (h_i))$.

To finish off, we note
\begin{gather*}
  \mbox{ann} (T) = \bigcap_i \mbox{ann} (\pi h_i) \subseteq \bigcap_i \mbox{ann} (\theta (h_i)) = \mbox{ann} (\theta (h_1),\cdots,\theta (h_r))
\end{gather*}
so $\dimension T \geq \dimension (\theta (h_1),\cdots,\theta (h_r)) = \dimension \theta (T')$.

Hence we have produced a nonzero submodule of $H^{-1}(E \otimes B/\pi^{m-1})$ of dimension $\leq d$, and we are done.
\end{proof}

\paragraph{Remark.}  Note that, all the results in section \ref{section-ppp} and Proposition \ref{XkflatXmflat} hold if we replace $\Ap_m$ with $\Coh (X_m)$.  It is also hoped that the techniques developed here apply to any situation where we have:
\begin{enumerate}
\item[(a)] a collection of abelian categories $\{A_m\}_{m \geq 1}$, and a t-structure on each derived category $D(A_m)$ obtained by tilting the standard t-structure on $D(A_m)$;
\item[(b)] `pushforward functors' ${\iota_{m,m'}}_\ast : A_{m'} \to A_m$ and `pullback functors' ${\iota^\ast_{m,m'}} : A_{m} \to A_{m'}$ for any $1 \leq m' < m$;
\item[(c)] with respect to the non-standard t-structures in (a), the derived pushforward functors are t-exact  and the derived pullback functors are right t-exact.
\end{enumerate}

\end{document}